\setlist[enumerate]{leftmargin=.5in}
\setlist[itemize]{leftmargin=.5in}
\crefname{hypothesis}{Hypothesis}{Hypotheses}
\title{Uncertainty quantification for the BGK model of the Boltzmann equation using multilevel variance reduced Monte Carlo methods\thanks{Submitted to the editors DATE.\funding{J.H.'s research was supported in part by NSF grant DMS-1620250 and NSF CAREER grant DMS-1654152. L.P. would like to thank the Italian Ministry of Instruction, University and Research (MIUR) to support
this research with PRIN Project 2017, No. 2017KKJP4X, "Innovative numerical methods for evolutionary partial
differential equations and applications".}}}
\author{Jingwei Hu\thanks{Department of Mathematics, Purdue University, West Lafayette, IN 47907, USA 
  (\email{jingweihu@purdue.edu}).}
\and Lorenzo Pareschi\thanks{Department of Mathematics and Computer Science, University of Ferrara, Via Machiavelli 30, 44121-Ferrara, Italy 
  (\email{lorenzo.pareschi@unife.it}).}
\and Yubo Wang\thanks{Department of Mathematics, Purdue University, West Lafayette, IN 47907, USA 
  (\email{wang3158@purdue.edu}).}}
\newcommand{\rd}{\,\mathrm{d}}
\newcommand{\abs}[1]{\lvert #1 \rvert}
\newcommand{\p}{\partial}
\newcommand{\fl}[2]{\frac{#1}{#2}}
\newcommand{\ab}[1]{\lvert #1 \rvert}
\newcommand{\noab}[1]{\lVert #1 \rVert}
\newcommand{\be}{\begin{equation}}
\newcommand{\ee}{\end{equation}}
\newcommand{\bal}{\begin{aligned}}
\newcommand{\eal}{\end{aligned}}
\newcommand{\ba}{\begin{array}}
\newcommand{\ea}{\end{array}}
\newcommand{\bea}{\begin{eqnarray}}
\newcommand{\eea}{\end{eqnarray}}
\newcommand{\beas}{\begin{eqnarray*}}
\newcommand{\eeas}{\end{eqnarray*}}
\newcommand{\beit}{\begin{itemize}}
\newcommand{\eit}{\end{itemize}}
\newtheorem{assumption}{Assumption}[section]
\renewcommand{\vec}[1]{\boldsymbol{\mathrm{#1}}}
\newcommand{\bmat}[1]{\begin{bmatrix} #1 \end{bmatrix}}
\newcommand{\mat}[1]{\boldsymbol{#1}}
\providecommand{\mU}{\ensuremath{\mat{U}}}
\providecommand{\va}{\ensuremath{\vec{a}}}
\providecommand{\vn}{\ensuremath{\vec{n}}}
\providecommand{\vv}{\ensuremath{\vec{v}}}
\providecommand{\vx}{\ensuremath{\vec{x}}}
\providecommand{\h}{q}
\begin{document}

\maketitle

\begin{abstract}
We propose a control variate multilevel Monte Carlo method for the kinetic BGK model of the Boltzmann equation subject to random inputs. The method combines a multilevel Monte Carlo technique with the computation of the optimal control variate multipliers derived from local or global variance minimization problems. Consistency and convergence analysis for the method equipped with a second-order positivity-preserving and asymptotic-preserving scheme in space and time is also performed. Various numerical examples confirm that the optimized multilevel Monte Carlo method outperforms the classical multilevel Monte Carlo method especially for problems with discontinuities.
\end{abstract}

\begin{keywords}{}
uncertainty quantification, random inputs, kinetic equations, BGK model, Monte Carlo method, multilevel Monte Carlo method, control variate method.
\end{keywords}

\begin{AMS}
  35R60, 35Q20, 65C05
\end{AMS}
\overfullrule=0pt
\section{Introduction}

Kinetic theory, from a statistical physics viewpoint \cite{chapman1990mathematical},  represents an essential tool to model the non-equilibrium dynamics in a variety of fields including rarefied gases, semiconductors, plasmas, and even large particle systems in biological and social sciences \cite{cercignani1994r,villani2002review, ParTosbook}. The most fundamental kinetic equation, the Boltzmann equation, describes the statistical behavior of a thermodynamic system by taking into account particle transport and binary collisions \cite{cercignani1988boltzmann}:
\be
\label{eqn:boltzmann}
\p_{t} f +\vv \cdot \nabla_{\vx} f=\frac{1}{\varepsilon}\mathcal{Q}(f,f),\ \ \vx\in D \subset \mathbb{R}^3,\ \vv \in \mathbb{R}^3, \ t>0,
\ee
where $f=f(\vx,\vv,t)$ is the phase space distribution function of position $\vx$, velocity $\vv$ and time $t$. The collision term $\mathcal{Q}(f, f)$ is a high-dimensional, quadratic integral operator. $\varepsilon$ is the Knudsen number, defined as the ratio of the mean free path and the typical length scale. In most applications, $\varepsilon$ varies from $O(1)$, the kinetic regime, to $\varepsilon \ll 1$, the fluid regime. Albeit wide applicability of the Boltzmann equation, the complexity of the collision operator $\mathcal{Q}(f,f)$ makes both analysis and computation of the equation extremely challenging. Hence many simplified collisional models have been introduced to mimic the properties of the full Boltzmann operator. Among these, the Bhatnagar-Gross-Krook (BGK) model \cite{bhatnagar1954model}, which assumes a simple relaxation to equilibrium, has been widely used. The model reads as follows:
\be
\label{eqn:bgk}
\p_{t} f +\vv \cdot \nabla_{\vx} f=\frac{1}{\varepsilon}(M[f]-f),\ \ \vx\in D \subset \mathbb{R}^3,\ \vv \in \mathbb{R}^3, \ t>0,
\ee
where $M[f]$ is the so-called Maxwellian equilibrium function given by
\be
\label{eqn:maxwellian}
M[f](\vx,\vv,t)=\fl {\rho(\vx,t)} {(2\pi T(\vx,t))^{\fl {3} 2}}\exp\left({-\fl {\ab{\vv-\mU(\vx,t)}^2}{2T(\vx,t)}}\right),
\ee
where $\rho(\vx,t)$, $\mU(\vx,t)$, $T(\vx,t)$ are the density, bulk velocity, and temperature defined through the moments of $f$:
\be
\bal
\label{eqn:macroquant}
&\rho(\vx,t)=\int_{\mathbb{R}^3} f(\vx,\vv,t) \ {\rm d}\vv ,\text{\quad \quad} \mU(\vx,t)=\fl 1 {\rho(\vx,t)}\int_{\mathbb{R}^3} \vv f(\vx,\vv,t) \ {\rm d}\vv ,\\
&T(\vx,t)=\fl 1 {3\rho(\vx,t)} \int_{\mathbb{R}^3} \ab {\vv-\mU(\vx,t)}^2 f(\vx,\vv,t)\ {\rm d}\vv.
\eal
\ee
Let \(\Phi(\vv)=\left[1, \vv, \fl {1} 2 |\vv|^2\right]^T\), then one has the following conservation property:
\be
\label{eq:localconservation}
\int_{\mathbb{R}^3} M[f](\vx,\vv,t)\Phi(\vv) {\rm d}\vv=\int_{\mathbb{R}^3} f(\vx,\vv,t)\Phi(\vv){\rm d}\vv=\bmat{\rho\\ \rho\mU \\ \fl {{3}}{2}\rho T+\fl 1 2 {\rho{\ab {\mU}}^2}}=:\bmat{\rho\\ \mathbf{m} \\ E},
\ee
where $\mathbf{m}$ is the momentum and $E$ is the total energy. Using \cref{eq:localconservation}, if we multiply \cref{eqn:bgk} by \(\Phi(\vv)\) and integrate over $\vv$, we obtain the following local conservation law:
\be
\label{eq:localconservationsep}
\left\{
\bal
&\p_t \int_{\mathbb{R}^3} f {\rm d} \vv +\nabla_{\vx} \cdot \int_{\mathbb{R}^3} \vv f {\rm d} \vv =0,\\
&\p_t \int_{\mathbb{R}^3} \vv f {\rm d} \vv +\nabla_{\vx} \cdot \int_{\mathbb{R}^3} \vv \otimes \vv f {\rm d} \vv =0,\\
&\p_t \int_{\mathbb{R}^3}  \fl 1 2 \abs{\vv}^2 f {\rm d} \vv +\nabla_{\vx} \cdot \int_{\mathbb{R}^3} \fl 1 2\vv \abs{\vv}^2 f {\rm d} \vv =0.
\eal
\right.
\ee
When $\varepsilon\rightarrow 0$, formally we have $f \rightarrow M[f]$ from \cref{eqn:bgk}. Replacing $f$ by $M[f]$ in the above local conservation law yields the compressible Euler equations:
\be
\label{Euler}
\left\{
\bal
&\p_t  \rho +\nabla_{\vx} \cdot (\rho \mU) =0,\\
&\p_t (\rho \mU)  +\nabla_{\vx} \cdot (\rho \mU\otimes\mU+\rho TI) =0,\\
&\p_t E +\nabla_{\vx} \cdot ((E+\rho T)\mU )=0.\\
\eal
\right.
\ee

In the last decades, research activities in kinetic theory have focused mainly on deterministic kinetic equations, both theoretically and numerically \cite{cercignani1994r,villani2002review,dimarco2014numerical}, ignoring the presence of uncertain/random inputs. In reality, uncertainties may arise in initial/boundary conditions and other parameters, like the details of the microscopic interaction, because of incomplete knowledge or imprecise measurement. Recently, there has been a significant interest to study the impact of these random inputs in kinetic equations, see \cite{HJ17} and the whole collection \cite{jin2018uncertainty} for an overview. In order to quantify the above uncertainties, the construction of numerical methods for kinetic equations has been mostly oriented on stochastic Galerkin approximation based on generalized Polynomial Chaos expansion (gPC-sG), already successfully applied to many physical and engineering problems \cite{GS,Xiu}. We mention that recently gPC-sG methods have been successfully applied also to direct simulation Monte Carlo methods for the Boltzmann equation \cite{pareschi2020monte}. Despite the fact that gPC-sG methods have been able to show spectral accuracy for smooth solutions, they suffer the drawback of the curse of dimensionality and their highly intrusive nature. On one hand, existing codes for simulating the deterministic kinetic problems need to be completely reconfigured to implement the gPC-sG method. On the other hand, intrusiveness can induce some non-physical approximations even when the deterministic numerical solvers possess the correct physical properties. For example, due to the gPC expansion, the methods may induce approximations with non positive density; furthermore, close to fluid regimes, it is well-known that the gPC-sG system may lose hyperbolicity and lead to spurious solutions \cite{despres2013robust}. 

Another class of methods for uncertainty quantification is based on statistical Monte Carlo (MC) sampling, where the random space is sampled and the underlying deterministic PDE is solved for each sample. The non-intrusiveness of the method enables the approximated solutions to inherit properties, like positivity preservation, of the existing deterministic solvers and makes the parallel computing feasible for implementation. However, the asymptotic convergence rate is non-improvable by the central limit theorem and accelerated algorithms are obtained through variance reduction techniques \cite{caflisch_1998}. In this context, multifidelity methods for kinetic equations have been recently introduced in \cite{Dimarco2018, Dimarco2019, LiuXu}, see also the recent survey \cite{surveyPWG} for an introduction to the topic. These methods are capable to provide a significant speedup of the convergence properties of the Monte Carlo solver using as control variates simplified surrogate models that are cheaper to solve than the full model. A related line of research is based on the use of multilevel Monte Carlo (MLMC) methods (see for instance \cite{Mishra13, Mishra2016} and references therein for these methods applied to hyperbolic conservation laws), where the approximation of statistical expectation breaks up into telescopic sums of expectations of consecutive mesh sizes. These methods are closely related to multifidelity methods, since they essentially use in a recursive way the solution of the full model with various coarser meshes as surrogate models. 

In this manuscript, following the above analogy we develop multilevel Monte Carlo methods in a control variate setting for the multiscale kinetic equations. Therefore, in our MLMC method each level in the telescopic sum depends on an additional parameter which is computed in order to minimize the variance of the solver. We will perform this strategy, both locally between two different levels, as well as globally among all levels. As a prototype kinetic equation to design our methodology we consider the BGK model (\ref{eqn:bgk}) of the Boltzmann equation subject to random inputs.
Following the well-posedness results in \cite{Perthame1989,Perthame1993}, we provide a direct analogue of the former to the BGK equation with random parameters. Due to the non-intrusiveness of MC type methods, approximations of the statistical moments can preserve properties from the deterministic solvers. We adopt the Implicit-Explicit Runge-Kutta (IMEX-RK) scheme from \cite{Hu2018} to construct a second-order positivity-preserving (the distribution $f$ is positive for all $\varepsilon$) and asymptotic-preserving (the scheme becomes a solver for the limiting Euler sytem (\ref{Euler}) when $\varepsilon$ goes to zero) scheme for time and spatial discretizations. Various numerical examples confirm the good performance of MLMC methods compared to MC methods and that the control variate MLMC method outperforms the classical MLMC method especially for problems with discontinuities.
 
The rest of this paper is organized as follows. In the next section, we introduce the BGK equation with random inputs and establish the well-posedness of the equation. The Monte Carlo methods and analysis are presented in  \cref{sec:mc}, whereas in \cref{sec:cvmlmc} we discuss their multilevel extension in a standard and control variate setting. In \cref{sec:numericalresults} we show the numerical results obtained with standard MC, MLMC and control variate MLMC methods. Finally some conclusions are drawn in \cref{sec:conclusions}. In a separate \cref{appendix} we report the details of the dimension reduction method and the numerical scheme adopted to solve the deterministic BGK equation.


\section{The BGK equation with random inputs}
\label{sec:bgkeqn}

In this section we formulate systematically the BGK equation with random inputs and establish the well-posedness of the equation by extending the results in \cite{Perthame1989,Perthame1993}.

\subsection{Setup of the problem}

In the BGK equation, due to the uncertain parameters coming from the initial condition or boundary condition, the resulting solution $f$ would be a random variable taking values in the functional space, where the solution of the BGK equation (\ref{eqn:bgk}) lies in. In most circumstances, it is the physical observables or macroscopic quantities (such as $\rho$, $\mU$, $T$) at certain time that are of interest, hence we will mainly consider the random variables taking values in $L^1(D)$, where $D$ is the physical domain. Following the discussion in \cite{Mishra2016}, we first present some basic concepts from probability theory and functional analysis.

Let $(\Omega,\mathscr{F},\mathbb{P})$ be a probability space with $\Omega$ being the set of elementary events, $\mathscr{F}$ the corresponding $\sigma$-algebra, and $\mathbb{P}$ the probability measure mapping $\Omega$ into $[0,1]$ such that $\mathbb{P}(\Omega)=1$.
A random variable taking values in $L^1(D)$, a separable Banach space, is defined to be any mapping $X: \Omega \rightarrow L^1(D)$ such that the set $\{w \in \Omega: X(w)\in A\}=\{X \in A\}\in \mathscr{F}$ for any $A \in \mathscr{G}$, where $(L^1(D), \mathscr{G})$ is a measurable space.

To define the expectation and variance of random variables in $L^1(D)$, we need the concept of Bochner integral by extending the Lebesgue integral theory. The strong measurable mapping $X:\Omega \rightarrow L^1(D) $ is $Bochner$  $integrable $ if, for any probability measure $\mathbb{P}$ on the measurable space $(\Omega,\mathscr{F})$,
\be
\int_{\Omega} \noab{X(w)}_{L^1(D)}  \  {\rm d}\mathbb{P}(w) <  \infty.
\ee
Moreover, any Bochner integrable random variable $X:\Omega \rightarrow {L^1(D)}$ can be approximated by a sequence of simple random variables $\{X_n\}_{n\in \mathbb{N}}$ defined as follows,
\be
X_n=\sum\limits_{i=1}^{N} x_{n,i}  \chi_{A_{n,i}},  \quad A_{n,i} \in \mathscr{F}, \ x_{n,i} \in {L^1(D)}, \ N < \infty.
\ee
To get moments like expectation or central moments like variance, similar as the derivation of the Lebesgue integral, the Bochner integral is defined by taking the limit of sequences of simple random variables $\{X_n(w)\}$, for example, the $k$-th order moments is defined as
\be
\mathbb{E}[X^k]:=\int_{\Omega} X^k(w) \  {\rm d}\mathbb{P}(w)=\mathop{lim}\limits_{n \rightarrow \infty} \int_{\Omega} X^k_n(w)\  {\rm d}\mathbb{P}(w),
\ee
and the variance is defined as
\be
\mathbb{V}[X]:=\mathbb{E}[(X-\mathbb{E}[X])^2]=\int_{\Omega} (X(w)-\mathbb{E}[X])^2 \  {\rm d}\mathbb{P}(w)=\mathbb{E}[X^2]-(\mathbb{E}[X])^2.
\ee
For the error analysis, we need to introduce the Banach space $L^p(\Omega,\mathscr{F},\mathbb{P};{L^1(D)})$ with the norm
\be
\noab{X}_{L^p(\Omega;{L^1(D)})}:=(\mathbb{E}[\noab{X}_{L^1(D)}^p])^{\fl 1 p} < \infty,\  1 \leq p < \infty;
\ee
and $L^{\infty}(\Omega,\mathscr{F},\mathbb{P};{L^1(D)})$ with the norm
\be
\noab{X}_{L^\infty(\Omega;{L^1(D)})}:={\text{ess}\sup}_{w\in \Omega}\noab{X}_{L^1(D)}.
\ee

The BGK equation with random inputs hence reads
\be
\bal
\label{eqn:bgkwithrandomness}
\p_{t} f(w; \vx,\vv,t) +\vv \cdot \nabla_{\vx} f(w; \vx,\vv,t)=\frac{1}{\varepsilon}(M[f](w; \vx,\vv,t)-f(w; \vx,\vv,t)), \\ w \in \Omega, \  \vx\in D \subset \mathbb{R}^3,\ \vv \in \mathbb{R}^3, \ t>0,\\
\eal
\ee
where
\be
\label{eqn:maxwellianwithrandomness}
M[f](w;\vx,\vv,t)=\fl {\rho(w;\vx,t)} {(2\pi T(w;\vx,t))^{\fl {3} 2}}\exp\left({-\fl {\ab{\vv-\mU(w;\vx,t)}^2}{2T(w;\vx,t)}}\right),
\ee
with
\be
\bal
\label{eqn:macroquantwithrandomness}
&\rho(w;\vx,t)=\int_{\mathbb{R}^3} f(w;\vx,\vv,t) \ {\rm d}\vv ,\text{\quad \quad} \mU(w;\vx,t)=\fl 1 {\rho(w;\vx,t)}\int_{\mathbb{R}^3} \vv f(w;\vx,\vv,t) \ {\rm d}\vv ,\\
&T(w;\vx,t)=\fl 1 {3\rho(w;\vx,t)} \int_{\mathbb{R}^3} \ab {\vv-\mU(w;\vx,t)}^2 f(w;\vx,\vv,t)\ {\rm d}\vv.
\eal
\ee
The initial condition is given as
\be \label{eqn:initialcondition}
f(w;\vx,\vv,0)=f_0(w;\vx,\vv), \quad w \in \Omega, \  \vx\in D \subset \mathbb{R}^3,\ \vv \in \mathbb{R}^3.
\ee
For the boundary condition, we consider one of the following:
\begin{itemize}
\label{BC}
\item periodic boundary: $f(w;\vx+\va,\vv,t)=f(w;\vx,\vv,t)$ for $\vx \in  \p D$ and some $\va \in \mathbb{R}^3$; 
\item Dirichlet boundary: $f(w;\vx,\vv,t)=g(w;\vx,\vv,t)$ for $\vx\in \p D$;
\item purely diffusive Maxwell boundary: for $\vx \in \p D$,
\be
f(w;\vx,\vv,t)=M_w(w;\vx,\vv,t),  \  \vv \cdot \vn <0,
\ee
where $\vn$ is outward normal of $\p D$ and $M_w$ is given by
\be
 M_w(w;\vx,\vv,t)=\fl {\rho_w(w;\vx,t)} {(2\pi T_w(w;\vx,t))^{\fl {3} 2}}\exp\left({-\fl {\ab{\vv}^2}{2T_w(w;\vx,t)}}\right),
\ee
where $T_w(w;\vx,t)$ is the wall temperature and $\rho_w(w;\vx,t)$ is chosen such that
\be
\label{eqn:conserdiffbc}
\int_{\vv\cdot \vn > 0}\vv\cdot \vn \,f(w;\vx,\vv,t)\ {\rm d}\vv=-\int_{\vv\cdot \vn < 0}\vv\cdot \vn \,M_w(w;\vx,\vv,t)\ {\rm d} \vv.
\ee
\end{itemize}

\subsection{Well-posedness of the equation and some estimates of the macroscopic quantities} 

In the following, we establish the well-posedness of the BGK equation \cref{eqn:bgkwithrandomness} with random inputs. We also obtain some estimates for the macroscopic quantities $\rho$, $\mU$ and $T$. For simplicity, we assume the periodic boundary condition and consider the uncertainty only arising in the initial condition $f_0$.

First of all, some general estimates on the macroscopic quantities can be obtained point-wise in $w$ following \cite{Perthame1993} for the deterministic BGK equation.
\begin{proposition}[\cite{Perthame1993}]
\label{prop:macroestimate}
Suppose $f(w;\vx,\vv,t) \geq 0${}. Define $\rho(w;\vx,t)$, $\mU(w;\vx,t)$, $T(w;\vx,t)$ according to \cref{eqn:macroquantwithrandomness}. Moreover, set
\be
N_q(f):=\sup_{\vv}f(w;\vx,\vv,t)\abs{\vv}^q,  \quad q\geq0.
\ee
Then the following estimates hold:
\be
\label{inequality3}
\fl {\rho(w;\vx,t)} {{T(w;\vx,t)}^{\fl {3} 2}} \leq C N_0(f),
\ee
\be
\label{inequality4}
\rho(w;\vx,t)(3T(w;\vx,t)+\ab{\mU(w;\vx,t)}^2)^{\fl{q-3}{2}}\leq C N_{q}(f), \quad \text{for  } q>5,
\ee
where $C$ is a positive constant depending only on $q$.
\end{proposition}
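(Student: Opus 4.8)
The plan is to derive both inequalities from a single elementary device: split the velocity integral defining a moment of $f$ at a radius $R$, bound the low-velocity part by a supremum-type quantity times the volume of a ball and the high-velocity part by a Markov/Chebyshev tail estimate, and then choose $R$ optimally. Every bound below is applied at a fixed $w$, so the pointwise-in-$w$ nature of the claim requires no extra work. Before starting I would record the algebraic identity
\[
\int_{\mathbb{R}^3}\ab{\vv}^2 f\,\rd\vv = 3\rho T + \rho\ab{\mU}^2 = \rho\,(3T+\ab{\mU}^2),
\]
which follows by writing $\vv=(\vv-\mU)+\mU$, expanding the square, and using $\int_{\mathbb{R}^3}(\vv-\mU)f\,\rd\vv=0$ together with the definitions \cref{eqn:macroquantwithrandomness}. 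Thus the quantity $3T+\ab{\mU}^2$ appearing in \cref{inequality4} is nothing but $\rho^{-1}\int\ab{\vv}^2 f\,\rd\vv$.

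For \cref{inequality3} I would split $\rho=\int f\,\rd\vv$ over the ball $\{\ab{\vv-\mU}\le R\}$ and its complement. On the ball the bound $f\le N_0(f)$ gives $\int_{\ab{\vv-\mU}\le R} f\,\rd\vv \le \tfrac{4\pi}{3}N_0(f)R^3$; on the complement, Chebyshev's inequality together with $\int\ab{\vv-\mU}^2 f\,\rd\vv=3\rho T$ gives $\int_{\ab{\vv-\mU}>R} f\,\rd\vv \le 3\rho T/R^2$. Choosing $R=\sqrt{6T}$ makes the second term equal to $\rho/2$, which can be absorbed into the left-hand side, leaving $\rho \le C\, N_0(f)\,T^{3/2}$, i.e. \cref{inequality3}.

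For \cref{inequality4} I would instead estimate $M_2:=\int\ab{\vv}^2 f\,\rd\vv$ by splitting at a radius $R$ about the origin. The inner part is controlled by $\int_{\ab{\vv}\le R}\ab{\vv}^2 f\,\rd\vv\le R^2\rho$, while on $\{\ab{\vv}>R\}$ I would use the pointwise bound $f\le N_q(f)\ab{\vv}^{-q}$ to get $\int_{\ab{\vv}>R}\ab{\vv}^2 f\,\rd\vv \le N_q(f)\int_{\ab{\vv}>R}\ab{\vv}^{2-q}\,\rd\vv = \tfrac{4\pi}{q-5}N_q(f)R^{5-q}$. This is the crucial step, and it is exactly here that the hypothesis $q>5$ enters: in three dimensions the tail integral $\int_{\ab{\vv}>R}\ab{\vv}^{2-q}\,\rd\vv$ converges if and only if $q>5$. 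Thus $M_2\le R^2\rho + \tfrac{4\pi}{q-5}N_q(f)R^{5-q}$, and minimizing the right-hand side over $R$ (the optimum being $R^{q-3}\sim N_q(f)/\rho$) yields $M_2 \le C\,N_q(f)^{2/(q-3)}\rho^{(q-5)/(q-3)}$. Raising this to the power $(q-3)/2$ and substituting $M_2=\rho\,(3T+\ab{\mU}^2)$ then gives $\rho\,(3T+\ab{\mU}^2)^{(q-3)/2}\le C\,N_q(f)$, which is \cref{inequality4}.

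The only real obstacle is the bookkeeping: verifying the convergence threshold $q>5$ for the tail integral and carrying the exponents correctly through the optimization and the final substitution. Everything else is elementary, and no regularity or smoothness of $f$ beyond nonnegativity and finiteness of $N_0(f)$, $N_q(f)$ is used.
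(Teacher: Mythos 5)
Your proof is correct: both splitting arguments, the choices of radius, and the exponent bookkeeping (including the observation that $q>5$ is exactly the convergence threshold for the tail integral $\int_{\ab{\vv}>R}\ab{\vv}^{2-q}\,\rd\vv$) all check out, with the only implicit caveat being that $\rho$ and $T$ must be finite and $\rho>0$ in order to absorb the $\rho/2$ term and to divide by $\rho^{(q-5)/2}$, which is a standing assumption once the macroscopic quantities in \cref{eqn:macroquantwithrandomness} are well defined. Note that the paper itself gives no proof of this proposition — it is quoted directly from \cite{Perthame1993} — and your splitting-and-optimization argument is essentially the standard one used in that reference, so there is no divergence of approach to report.
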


Based on the above estimates, one can obtain the existence and uniqueness of the solution to (\ref{eqn:bgkwithrandomness}) also following \cite{Perthame1993} in a point-wise manner in $w$.
\begin{theorem}[\cite{Perthame1993}]
\label{thm:unqandext}
Set
\be
\mathbb{N}_{q}(f) := \sup\limits_{\vx}\sup\limits_{\vv}f(w;\vx,\vv,t)(1+\ab{\vv}^{q}).
\ee
Suppose the initial condition $f_0(w;\vx,\vv) \geq 0${} and that for some  $q>5$,
\be
\bal
\label{ass:unqiandext}
&\mathbb{N}_{q}(f_0) = \sup\limits_{\vx}\sup\limits_{\vv}f_0(w;\vx,\vv)(1+\ab{\vv}^{q}):=A(w),\\
&\sup_{w}A(w)\leq A_0 <\infty,
\eal
\ee
and
\be
\label{ass:uniandextini}
\bal
&\gamma(w;\vx,t):=\int\limits_{\mathbb{R}^3} f_0(w;\vx-\vv t,\vv) \ {\rm d}\vv, \\
&\inf_w\inf_{\vx}\inf_{t}\gamma(w;\vx,t) \geq \inf_wC(w)\geq C_0 >0,
\eal
\ee
then, for fixed Knudsen number $\varepsilon >0$, there exists a unique mild solution of the initial-value problem \cref{eqn:bgkwithrandomness}-\cref{eqn:initialcondition} with periodic boundary condition. 

Moreover, for all $t>0$, the following bounds hold:
\be
\label{inequality1}
\mathbb{N}_0(f(t)),\mathbb{N}_{q}(f(t)) \leq A_0\exp\left(\fl {C} {\varepsilon} t \right),
\ee
\be
\label{inequality2}
\inf_{\vx}\rho(w;\vx,t) \geq C_0\exp\left(-\fl t {\varepsilon}\right),
\ee
where $C$ is a constant depending only on $q$.
\end{theorem}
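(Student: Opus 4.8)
The plan is to treat $w\in\Omega$ as a fixed parameter and reduce the statement, realization by realization, to the deterministic well-posedness theory of \cite{Perthame1993}, then to promote the resulting pointwise-in-$w$ bounds to uniform ones and verify measurability. The object to work with is the mild (Duhamel) formulation of \cref{eqn:bgkwithrandomness} along the free-transport characteristics: a mild solution is a fixed point of the map $\mathcal{T}$ given by
\be
f(w;\vx,\vv,t)=e^{-t/\varepsilon}f_0(w;\vx-\vv t,\vv)+\fl 1 \varepsilon\int_0^t e^{-(t-s)/\varepsilon}M[f](w;\vx-\vv(t-s),\vv,s)\,\rd s,
\ee
with $M[f]$ reconstructed from the moments \cref{eqn:macroquantwithrandomness}. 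First I would set up the iteration $f^{(n+1)}=\mathcal{T}f^{(n)}$ started from the free-transport term $e^{-t/\varepsilon}f_0(w;\vx-\vv t,\vv)$.

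Fixing $w$, I would check that the single realization $f_0(w;\cdot,\cdot)$ satisfies the hypotheses of the deterministic theorem: \cref{ass:unqiandext} gives $\mathbb{N}_q(f_0)=A(w)<\infty$, and \cref{ass:uniandextini} gives a strictly positive lower bound $\gamma(w;\vx,t)\geq C(w)>0$ on the transported density. These are precisely the conditions under which \cite{Perthame1993} closes the contraction argument for $\mathcal{T}$ and produces a unique nonnegative mild solution on $[0,\infty)$; the a priori control that keeps $\mathbb{N}_q$ finite along the iterates is supplied by \cref{prop:macroestimate}, which estimates the Maxwellian moments $N_0(M[f])$, $N_q(M[f])$ in terms of $N_0(f)$ and $N_q(f)$. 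Carrying this out for each $w$ yields a unique mild solution $f(w;\cdot,\cdot,t)$.

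The upper bound \cref{inequality1} then follows from the fixed-point identity: multiplying by $(1+\ab{\vv}^q)$, taking $\sup_{\vx,\vv}$, and bounding the Maxwellian term by $C\,\mathbb{N}_q(f(s))$ with the help of \cref{prop:macroestimate} gives
\be
\mathbb{N}_q(f(t))\leq e^{-t/\varepsilon}\mathbb{N}_q(f_0)+\fl C \varepsilon\int_0^t e^{-(t-s)/\varepsilon}\mathbb{N}_q(f(s))\,\rd s,
\ee
which Gronwall's lemma turns into $\mathbb{N}_q(f(t))\leq A(w)\exp(Ct/\varepsilon)$; the same argument with $q=0$ controls $\mathbb{N}_0$, and $A(w)\leq A_0$ from \cref{ass:unqiandext} removes the $w$-dependence. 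The lower bound \cref{inequality2} is cleaner: since $M[f]\geq0$, discarding the nonnegative integral term and integrating in $\vv$ gives $\rho(w;\vx,t)\geq e^{-t/\varepsilon}\int_{\mathbb{R}^3}f_0(w;\vx-\vv t,\vv)\,\rd\vv=e^{-t/\varepsilon}\gamma(w;\vx,t)$, whence $\inf_{\vx}\rho(w;\vx,t)\geq C_0e^{-t/\varepsilon}$ after using $\gamma\geq C(w)\geq C_0$.

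The step I expect to require the most care is not the deterministic construction, which is quoted from \cite{Perthame1993}, but the measurability of $w\mapsto f(w;\cdot,\cdot,t)$ as an $L^1(D)$-valued random variable, needed for the $L^p(\Omega;L^1(D))$ framework used in the later error analysis. Here I would argue that the initial iterate is strongly measurable (since $f_0$ is), that $\mathcal{T}$ preserves strong measurability because the moment maps and the Maxwellian depend continuously on $f$ and the time integral commutes with the $w$-dependence, and that the pointwise limit of the measurable iterates $f^{(n)}$ is again measurable; the uniform bounds then place $f(t)$ in the required Bochner space. The hypotheses $\sup_w A(w)\leq A_0$ and $\inf_w C(w)\geq C_0$ are exactly what make all these estimates hold $\mathbb{P}$-almost surely with deterministic constants.
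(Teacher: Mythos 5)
Your proposal is correct and takes essentially the same route as the paper: the paper states this theorem as a direct citation of the deterministic result of \cite{Perthame1993}, applied ``in a point-wise manner in $w$'' exactly as you do, with the uniform hypotheses $\sup_w A(w)\leq A_0$ and $\inf_w C(w)\geq C_0$ upgrading the pointwise bounds to the stated uniform ones. Your additional discussion of strong measurability of $w\mapsto f(w;\cdot,\cdot,t)$ goes beyond what the paper records, but it is a sound (and welcome) complement to its Bochner-space framework.
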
 

As a direct consequence of \cref{prop:macroestimate} and \cref{thm:unqandext}, we have the following corollary on the upper bounds of the macroscopic quantities.
\begin{corollary}
\label{cor: uppbound}
Suppose the conditions in \cref{thm:unqandext} hold. We also assume the Knudsen number $\varepsilon \geq \varepsilon_0>0$. 
Then
for all $t>0$, the following bounds hold:
\be
\sup_w\sup_{\vx} \left\{\rho(w;\vx,t),\ \abs{\mU(w;\vx,t)},\ T(w;\vx,t)\right\}\leq C_1\exp\left(\fl {C_2}{\varepsilon_0}t\right),
\ee
where $C_1$ and $C_2$ are positive constants depending only on $A_0$, $C_0$ and $q$.
\end{corollary}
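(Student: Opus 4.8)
\section*{Proof proposal}

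The plan is to combine the two-sided control on $\rho$ coming from \cref{thm:unqandext} with the moment estimates of \cref{prop:macroestimate}. All the estimates in those two results are uniform in $w$ (the $w$-dependence of the data is already absorbed into the constants $A_0$ and $C_0$), so it suffices to produce bounds that are uniform in $\vx$ for fixed $w$ and then take the supremum over $w$ for free. The hypothesis $\varepsilon \geq \varepsilon_0 > 0$ is used only at the very end to replace every factor $\exp(Ct/\varepsilon)$ by $\exp(Ct/\varepsilon_0)$, which is legitimate for $t>0$ and $C>0$.

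First I would bound $\rho$ from above. From the definition of $\mathbb{N}_q$ one has the point-wise tail bound $f(w;\vx,\vv,t) \leq \mathbb{N}_q(f(t))/(1+\ab{\vv}^q)$, so that integrating in $\vv$ gives $\rho(w;\vx,t) \leq \mathbb{N}_q(f(t)) \int_{\mathbb{R}^3} (1+\ab{\vv}^q)^{-1}\,\rd\vv$. Since $q>5>3$, this velocity integral is a finite constant $\kappa_q$, and \cref{inequality1} then yields $\rho \leq \kappa_q A_0 \exp(Ct/\varepsilon)$ uniformly in $\vx$ and $w$. The matching lower bound $\rho \geq C_0\exp(-t/\varepsilon)$ is exactly \cref{inequality2}.

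Next I would convert the high-moment estimate into upper bounds on $T$ and $\ab{\mU}$. Estimate \cref{inequality4}, together with $N_q(f) \leq \mathbb{N}_q(f)$, gives $\rho\,(3T+\ab{\mU}^2)^{(q-3)/2} \leq C\,\mathbb{N}_q(f)$; dividing by the lower bound on $\rho$ isolates $3T+\ab{\mU}^2 \leq (C\,\mathbb{N}_q(f)/\rho)^{2/(q-3)}$, where the exponent $2/(q-3)$ is positive because $q>5$. Substituting \cref{inequality1} and \cref{inequality2} produces $3T+\ab{\mU}^2 \leq (CA_0/C_0)^{2/(q-3)}\exp(2(C+1)t/((q-3)\varepsilon))$. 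Since $T\geq 0$ and $\ab{\mU}^2 \geq 0$, each of $3T$ and $\ab{\mU}^2$ is dominated by the left-hand side, which gives separate upper bounds on $T$ and on $\ab{\mU}$.

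Finally I would collect the three bounds on $\rho$, $T$, $\ab{\mU}$, replace $1/\varepsilon$ by $1/\varepsilon_0$, and absorb the differing prefactors and growth rates into a single pair $C_1,C_2$ by taking the largest prefactor and the largest exponential rate; the resulting $C_1$ depends only on $A_0$, $C_0$, $q$ and $C_2$ only on $q$, as claimed. The one genuinely load-bearing step is the passage through the lower bound on $\rho$: estimate \cref{inequality4} controls a \emph{product} of $\rho$ with a power of the temperature and bulk speed, so an upper bound on $T$ and $\ab{\mU}$ can be extracted only once $\rho$ is known to stay bounded away from zero. This is why \cref{inequality2} — and hence the restriction $\varepsilon\geq\varepsilon_0$, which keeps the lower bound on $\rho$ from collapsing — is indispensable.
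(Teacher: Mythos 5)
Your proof is correct. The bounds on $T$ and $\abs{\mU}$ are obtained exactly as in the paper: divide \cref{inequality4} by the lower bound \cref{inequality2} on $\rho$, control the numerator via \cref{inequality1} (your remark that $N_q(f)\leq \mathbb{N}_q(f)$ is a detail the paper leaves implicit), raise to the power $2/(q-3)$, and split $3T+\abs{\mU}^2$ termwise. Where you genuinely differ is the upper bound on $\rho$. You integrate the tail bound $f\leq \mathbb{N}_q(f(t))/(1+\abs{\vv}^q)$ over $\vv$, which converges because $q>5>3$, so that $\rho\leq \kappa_q A_0\exp(Ct/\varepsilon)$ follows from \cref{inequality1} alone. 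The paper instead runs through \cref{inequality3} twice: first to derive the lower bound $T^{3/2}\geq C\rho/N_0(f)\geq C_1\exp(-C_2t/\varepsilon)$, and then again to bound $\rho$. Your route is shorter and also sidesteps a direction slip in the paper's write-up: the paper quotes \cref{inequality3} as $\rho\leq CN_0(f)/T^{3/2}$, but \cref{inequality3} actually states $\rho/T^{3/2}\leq CN_0(f)$, i.e. $\rho\leq CN_0(f)\,T^{3/2}$; with the correct direction one must pair it with the \emph{upper} bound on $T$ obtained in the first step (not with the lower bound on $T$), which still yields $\rho\leq C_1\exp(C_2t/\varepsilon)$. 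So both arguments close the proof once that is repaired; the only thing the paper's detour buys is the lower bound on $T$ as a byproduct, which the corollary does not use, while your tail-integration argument needs one fewer estimate and makes the dependence of $C_1$, $C_2$ on $A_0$, $C_0$, $q$ transparent.
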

\begin{proof}
By \cref{inequality4}, \cref{inequality1} and \cref{inequality2}, we have
\be
(3T(w;\vx,t)+\ab{\mU(w;\vx,t)}^2)^{\fl{q-3}{2}}\leq \frac{C N_{q}(f)}{\rho(w;\vx,t)}\leq C_1\exp\left(\frac{C_2}{\varepsilon}t\right).
\ee
Hence
\be
T(w;\vx,t) \leq C_1\exp\left(\frac{C_2}{\varepsilon}t\right), \quad |\mU(w,\vx,t)|\leq C_1\exp\left(\frac{C_2}{\varepsilon}t\right).
\ee

By \cref{inequality3}, \cref{inequality1} and \cref{inequality2}, we have
\be \label{Tlow}
T(w;\vx,t)^{\fl 3 2}\geq \frac{C \rho(w;\vx,t)}{N_0(f)}\geq C_1\exp\left(-\frac{C_2}{\varepsilon}t\right).
\ee
Again using \cref{inequality3}, we have
\be
\rho(w;\vx,t)\leq \frac{CN_0(f)}{T(w;\vx,t)^{\fl 32}}.
\ee
Finally, by (\ref{Tlow}) and (\ref{inequality1}), we have
\be
\rho(w;\vx,t)\leq C_1\exp\left(\frac{C_2}{\varepsilon}t\right).
\ee
\end{proof}


\section{Standard Monte Carlo method}
\label{sec:mc}

In this section, we describe the basic Monte Carlo sampling method to solve the BGK equation (\ref{eqn:bgkwithrandomness}) and establish some error estimates. For simplicity, we will consider that the uncertainty only comes from the initial condition. The case for the random boundary condition is similar.

\subsection{Monte Carlo method}
\label{subsec:mc}

Suppose we generate $M$ independent and identically distributed (i.i.d.) random samples $f_0^i$, $i=1,\dots,M,$ according to the random initial condition $f_0(w;\vx,\vv)$. Then each ${f}_0^i(w;\vx,\vv)$ will yield a unique analytical solution to (\ref{eqn:bgkwithrandomness}) at time $t$, denoted by ${f}^{i}(w;\vx,\vv,t)$.
From ${f}^{i}(w;\vx,\vv,t) $, we can easily compute
\be
\label{def:macroquant}
\bal
&{{\rho}^i(w;\vx,t)}={\int_{\mathbb{R}^3} {f}^{i}(w;\vx,\vv,t)\, {\rm d}\vv}, \quad {{\textbf{m}}^i(w;\vx,t)}={\int_{\mathbb{R}^3} \vv{f}^{i}(w;\vx,\vv,t)\, {\rm d}\vv},\\
&{{E}^i(w;\vx,t)}=\int_{\mathbb{R}^3} \fl {|\vv|^2} 2 {f}^{i}(w;\vx,\vv,t)\, {\rm d}\vv,
\eal
\ee
then $\mU^i$ and $T^i$ are given by
\be
\label{def:bulk u}
{\mU}^{i}(w;\vx,t)=\fl{{\textbf{m}}^{i}(w;\vx,t)}{{\rho}^i(w;\vx,t)}, \quad  {T}^{i}(w;\vx,t)=\fl {2{\rho}^i(w;\vx,t)E^i(w;\vx,t)-|{\textbf{m}}^i(w;\vx,t)|^2}{3({\rho}^i(w;\vx,t))^2}.
\ee
Since it is the macroscopic quantities we are interested in, in the following, without further notice we will use a single variable $q$ to denote $\rho$, $|\mU|$ or $T$. 

Given the samples $q^i$, $i=1,\dots,M$, the MC estimate of the expectation $\mathbb{E}[\h(w;\vx,t)]$ is given by
\be
\label{def:mc}
\mathbb{E}[\h(w;\vx,t)]\approx E_M[\h(w;\vx,t)]:=\fl 1 M \sum\limits_{i=1}^{M} {\h}^{i}(w;\vx,t).
\ee
To estimate the error between $\mathbb{E}[\h(w;\vx,t)]$ and $E_M[\h(w;\vx,t)]$, we need the following lemma.
\begin{lemma}
\label{lemma:l2idt}
For every finite sequence ${\{Y_j\}}_{j=1}^M$ of independent random variables with zero mean in $L^2(\Omega;L^2(D))$,
\be
\Big\|\sum\limits_{j=1}^{M}Y_j\Big\|^2_{L^2(\Omega;L^2(D))}=\sum\limits_{j=1}^{M}\|Y_j\|_{L^2(\Omega;L^2(D))}^2.
\ee
\end{lemma}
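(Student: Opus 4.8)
The plan is to exploit the Hilbert space structure of the spatial factor $L^2(D)$, so that the outer $L^2(\Omega;L^2(D))$-norm squared becomes an expectation of an inner product which expands into a double sum; the identity then reduces to showing that all off-diagonal cross terms vanish. Recall that by definition $\noab{X}_{L^2(\Omega;L^2(D))}^2=\mathbb{E}[\noab{X}_{L^2(D)}^2]$ and that the spatial norm is induced by the inner product $\langle u,v\rangle_{L^2(D)}=\int_D u\,v\,\mathrm{d}\vx$.

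First I would expand, for each fixed $w$,
\[
\Big\|\sum_{j=1}^M Y_j\Big\|_{L^2(D)}^2=\Big\langle\sum_{j=1}^M Y_j,\sum_{k=1}^M Y_k\Big\rangle_{L^2(D)}=\sum_{j=1}^M\sum_{k=1}^M\langle Y_j,Y_k\rangle_{L^2(D)},
\]
and take the expectation, using linearity over the finite double sum, to obtain
\[
\Big\|\sum_{j=1}^M Y_j\Big\|_{L^2(\Omega;L^2(D))}^2=\sum_{j=1}^M\sum_{k=1}^M\mathbb{E}\big[\langle Y_j,Y_k\rangle_{L^2(D)}\big].
\]
The diagonal terms $j=k$ contribute exactly $\sum_{j=1}^M\mathbb{E}[\noab{Y_j}_{L^2(D)}^2]=\sum_{j=1}^M\noab{Y_j}_{L^2(\Omega;L^2(D))}^2$, which is the desired right-hand side, so everything comes down to proving $\mathbb{E}[\langle Y_j,Y_k\rangle_{L^2(D)}]=0$ whenever $j\neq k$.

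For the cross terms I would first note that each summand is integrable: by Cauchy--Schwarz in $L^2(D)$ and then in $\Omega$,
\[
\mathbb{E}\big[|\langle Y_j,Y_k\rangle_{L^2(D)}|\big]\le \mathbb{E}\big[\noab{Y_j}_{L^2(D)}\noab{Y_k}_{L^2(D)}\big]\le \noab{Y_j}_{L^2(\Omega;L^2(D))}\noab{Y_k}_{L^2(\Omega;L^2(D))}<\infty.
\]
This integrability licenses an application of Fubini's theorem to interchange the expectation with the spatial integral, giving $\mathbb{E}[\langle Y_j,Y_k\rangle_{L^2(D)}]=\int_D\mathbb{E}[Y_j(w;\vx)\,Y_k(w;\vx)]\,\mathrm{d}\vx$. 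For $j\neq k$ the variables $Y_j$ and $Y_k$ are independent, hence so are their pointwise evaluations $Y_j(\cdot;\vx)$ and $Y_k(\cdot;\vx)$ for a.e.\ $\vx$; combined with the zero-mean hypothesis (interpreted as $\mathbb{E}[Y_j]=0$ in $L^2(D)$, i.e.\ $\mathbb{E}[Y_j(\cdot;\vx)]=0$ for a.e.\ $\vx$), this yields $\mathbb{E}[Y_j(w;\vx)Y_k(w;\vx)]=\mathbb{E}[Y_j(w;\vx)]\,\mathbb{E}[Y_k(w;\vx)]=0$, so the entire cross term vanishes after integrating in $\vx$.

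The routine bookkeeping (expansion and diagonal accounting) is immediate; the step requiring the most care is the interchange of the $\Omega$- and $D$-integrations together with the passage from independence of the $L^2(D)$-valued variables $Y_j,Y_k$ to independence of their scalar pointwise values $Y_j(\cdot;\vx),Y_k(\cdot;\vx)$ (which requires working with a jointly measurable representative so that the evaluation maps are measurable). I expect this Fubini-plus-independence argument to be the main, though standard, obstacle; once it is in place the zero-mean assumption closes the proof.
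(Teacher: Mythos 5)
Your proposal is correct and follows essentially the same route as the paper's proof: both reduce the Bochner norm to a pointwise-in-$\vx$ computation by interchanging the $\Omega$- and $D$-integrations, and then kill the off-diagonal cross terms using independence and the zero-mean hypothesis (the paper packages this last step as the additivity of variance for independent random variables, $\mathbb{V}[\sum_j Y_j]=\sum_j\mathbb{V}[Y_j]$, which is exactly your double-sum expansion in disguise). Your version is simply more explicit about the Fubini justification and the pointwise-independence measurability point, which the paper leaves implicit.
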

\begin{proof} 
From independence of ${\{Y_j\}}_{j=1}^M$ and that \(\mathbb{E}[Y_j]=0\),
\be
\bal
\Big\|\sum\limits_{j=1}^{M}Y_j\Big\|^2_{L^2(\Omega;L^2(D))}&=\int_D\mathbb{E} [(\sum\limits_{j=1}^{M}Y_j) ^2] \ {\rm  d} \vx=\int_D\mathbb{V} [\sum\limits_{j=1}^{M}Y_j] \ {\rm  d} \vx\\
&=\int_D\sum\limits_{j=1}^{M}\mathbb{V} [Y_j ] \ {\rm  d} \vx=\sum\limits_{j=1}^{M}\int_D\mathbb{E} [Y_j ^2]\  {\rm  d} \vx =\sum\limits_{j=1}^{M}\noab{Y_j}_{L^2(\Omega;L^2(D))}^2.
\eal
\ee
\end{proof}
We have the following consistency theorem.
\begin{theorem}
\label{thm:mcconsis}
For any M $\in \mathbb{N}^+$, at time $t=t_1$,
\be
 \noab {\mathbb{E}[\h(w;\vx,t_1)]-E_M[\h(w;\vx,t_1)]}_{L^2(\Omega;L^1(D))}
\leq  M^{-\fl 1 2} \ab{D}^{\fl 1 2} \noab{\mathbb{V}[\h(w;\vx,t_1)]}_{L^1(D)}^{\fl 1 2}.
\ee
\end{theorem}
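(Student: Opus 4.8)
The plan is to rewrite the Monte Carlo error as a normalized sum of i.i.d.\ centered random variables and then reduce the $L^2(\Omega;L^1(D))$ norm to the $L^2(\Omega;L^2(D))$ setting in which \cref{lemma:l2idt} applies. Concretely, I set $Y_i := \h^i(w;\vx,t_1)-\mathbb{E}[\h(w;\vx,t_1)]$ for $i=1,\dots,M$. Because the $\h^i$ are i.i.d.\ copies of $\h$, the $Y_i$ are independent, identically distributed, and mean-zero, and from the definition \cref{def:mc} the error is exactly $\mathbb{E}[\h]-E_M[\h]=-\tfrac1M\sum_{i=1}^M Y_i$. Squaring the target norm and unfolding its definition then gives $\|\mathbb{E}[\h]-E_M[\h]\|_{L^2(\Omega;L^1(D))}^2=\tfrac1{M^2}\,\mathbb{E}\big[\|\sum_{i=1}^M Y_i\|_{L^1(D)}^2\big]$.

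The key step, and the only one that is not bookkeeping, is to bridge the gap between the $L^1(D)$ norm appearing in the statement and the $L^2(D)$ norm required by \cref{lemma:l2idt}. For this I would apply the Cauchy--Schwarz (Hölder) inequality on the bounded physical domain $D$, pointwise in $w$: for any $g\in L^2(D)$ one has $\|g\|_{L^1(D)}\le |D|^{1/2}\|g\|_{L^2(D)}$. Applying this with $g=\sum_i Y_i$, squaring, and taking the expectation in $w$ yields $\mathbb{E}\big[\|\sum_i Y_i\|_{L^1(D)}^2\big]\le |D|\,\|\sum_i Y_i\|_{L^2(\Omega;L^2(D))}^2$, which is precisely what introduces the factor $|D|^{1/2}$ into the final bound.

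From here the computation is mechanical. I would invoke \cref{lemma:l2idt} to split the variance of the sum, $\|\sum_i Y_i\|_{L^2(\Omega;L^2(D))}^2=\sum_{i=1}^M\|Y_i\|_{L^2(\Omega;L^2(D))}^2$, and then use the i.i.d.\ property: each summand equals $\|\h-\mathbb{E}[\h]\|_{L^2(\Omega;L^2(D))}^2=\int_D\mathbb{E}[(\h-\mathbb{E}[\h])^2]\,\mathrm{d}\vx=\int_D\mathbb{V}[\h]\,\mathrm{d}\vx=\|\mathbb{V}[\h]\|_{L^1(D)}$, where the last equality uses $\mathbb{V}[\h]\ge 0$. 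Summing the $M$ identical terms and collecting powers of $M$ gives $\|\mathbb{E}[\h]-E_M[\h]\|_{L^2(\Omega;L^1(D))}^2\le \tfrac{|D|}{M}\,\|\mathbb{V}[\h]\|_{L^1(D)}$, and taking square roots produces the claimed estimate.

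The only genuine obstacle is the norm mismatch noted above; once the embedding $L^2(D)\hookrightarrow L^1(D)$ on the bounded domain is in place, the rest is the standard variance-scaling argument. I would also need $\h(\cdot;\cdot,t_1)\in L^2(\Omega;L^2(D))$ so that all quantities are finite and \cref{lemma:l2idt} is applicable; this integrability is guaranteed by the uniform boundedness of the macroscopic quantities established in \cref{cor: uppbound}.
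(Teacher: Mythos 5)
Your proposal is correct and follows essentially the same route as the paper's own proof: centering the samples into i.i.d.\ mean-zero variables, bridging $L^1(D)$ to $L^2(D)$ via Cauchy--Schwarz on the bounded domain (the source of the $\abs{D}^{1/2}$ factor), splitting the sum with \cref{lemma:l2idt}, and identifying each summand with $\noab{\mathbb{V}[\h]}_{L^1(D)}$. Your closing remark on the integrability needed to apply \cref{lemma:l2idt} (via \cref{cor: uppbound}) is a small point the paper defers to its convergence theorem, but it does not change the argument.
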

\begin{proof}
We interpret the $M$ samples $\{{f}_0^i\}_{i=1}^{M}$ as unique realizations of $M$ independent samples of $f_0$ in the probability space $(\Omega,\mathscr{F},\mathbb{P})$. In other words, $\{{f}_0^i\}_{i=1}^{M}$ are i.i.d. copies of $f_0 \in L^1(D\times\mathbb{R}^3)$. 
As a result, the corresponding copies of macroscopic quantities ${\{{\h}^{i}(w;\vx,t_1) \}}_{i=1}^{M}$ derived from the initial data $\{{f}_0^i\}_{i=1}^{M}$ are also independent in $ L^2(\Omega;L^1(D))$.

Denote $\mathbb{E}[\h(w;\vx,t_1)]-{\h}^{i}(w;\vx,t_1)$ by $\Delta \h^i(w,\vx,t_1)$, then
\be
\mathbb{E}[\Delta \h^i(w,\vx,t_1)]=0,
\ee
and 
\be
\noab {\mathbb{E}[\h(w;\vx,t_1)]-E_M[\h(w;\vx,t_1)]}_{L^2(\Omega;L^1(D))}=M^{- 1 }\Big\|\sum_{i=1}^M\Delta \h^i(w,\vx,t_1)\Big\|_{L^2(\Omega;L^1(D))}.
\ee
Using the boundedness of domain $D$,
\be
\label{mcproof1}
\Big\|\sum_{i=1}^M\Delta \h^i(w,\vx,t_1)\Big\|^2_{L^1(D)}\leq \abs{D}\Big\|\sum_{i=1}^M\Delta \h^i(w,\vx,t_1)\Big\|^2_{L^2(D)}.
\ee
Taking the expectation, noting that $\Delta \h^i$ are independent and using \cref{lemma:l2idt}, we have
\be
\bal
&\Big\|\sum_{i=1}^M\Delta \h^i(w,\vx,t_1)\Big\|_{L^2(\Omega;L^1(D))}\leq \abs{D}^{\fl 1 2}\Big\|\sum_{i=1}^M\Delta \h^i(w,x,t_1)\Big\|_{L^2(\Omega;L^2(D))}\\
=&\abs{D}^{\fl 1 2}\sqrt{\sum_{i=1}^M\noab{\Delta \h^i(w,\vx,t_1)}^2_{L^2(\Omega;L^2(D))}}= \abs{D}^{\fl 1 2} M^{\fl 1 2} \noab{\Delta \h^i(w,\vx,t_1)}_{L^2(\Omega;L^2(D))}\\
=&\abs{D}^{\fl 1 2} M^{\fl 1 2}\noab{\mathbb{V}[\h(w;\vx,t_1)]}_{L^1(D)}^{\fl 1 2}.
\eal
\ee
\end{proof}

As a direct result of \cref{thm:mcconsis} and \cref{cor: uppbound},  we have the following convergence theorem.
\begin{theorem}
\label{thm:convmc}
 Under  assumptions of \cref{thm:unqandext} and \cref{cor: uppbound}, for $ 0<t_1<\infty$, as $M \rightarrow \infty$, the MC estimate $E_M[\h(w;\vx,t_1)]$ converges in $L^2(\Omega;L^1(D))$  to $\mathbb{E}[\h(w;\vx,t_1)]$. Furthermore, for any M $\in \mathbb{N}^+$,  there holds the error bound
\be
 \noab {\mathbb{E}[\h(w;\vx,t_1)]-E_M[\h(w;\vx,t_1)]}_{L^2(\Omega;L^1(D))}
\leq C_1 \abs{D} \exp\left(\fl {C_2} {\varepsilon_0} t_1\right)M^{-\fl 1 2}.
\ee
\end{theorem}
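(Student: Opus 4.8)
The plan is to chain the consistency estimate of \cref{thm:mcconsis} together with the uniform upper bounds on the macroscopic quantities furnished by \cref{cor: uppbound}; the statement is essentially an assembly, so the only substantive input is the boundedness of the variance. First I would invoke \cref{thm:mcconsis} directly, which already supplies
\be
\noab {\mathbb{E}[\h(w;\vx,t_1)]-E_M[\h(w;\vx,t_1)]}_{L^2(\Omega;L^1(D))}
\leq M^{-\fl 1 2} \ab{D}^{\fl 1 2} \noab{\mathbb{V}[\h(w;\vx,t_1)]}_{L^1(D)}^{\fl 1 2}.
\ee
Thus the entire task reduces to controlling the remaining variance factor $\noab{\mathbb{V}[\h(w;\vx,t_1)]}_{L^1(D)}^{\fl 1 2}$.

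To bound the variance I would use the elementary inequality $\mathbb{V}[\h]=\mathbb{E}[\h^2]-(\mathbb{E}[\h])^2\leq \mathbb{E}[\h^2]$, valid for each fixed $\vx$. Since $\h$ stands for any one of $\rho$, $\ab{\mU}$, or $T$, \cref{cor: uppbound} supplies the pointwise-in-$w$ and pointwise-in-$\vx$ bound $\h(w;\vx,t_1)\leq C_1\exp\left(\fl{C_2}{\varepsilon_0}t_1\right)$, whence $\mathbb{E}[\h^2(w;\vx,t_1)]\leq C_1^2\exp\left(\fl{2C_2}{\varepsilon_0}t_1\right)$ for almost every $\vx\in D$ (using $\mathbb{P}(\Omega)=1$ to pass from the essential supremum over $w$ to the expectation). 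Because the variance is nonnegative, its $L^1(D)$ norm is just its integral over $D$, so integrating the constant bound produces an extra factor $\ab{D}$:
\be
\noab{\mathbb{V}[\h(w;\vx,t_1)]}_{L^1(D)}\leq \ab{D}\,C_1^2\exp\left(\fl{2C_2}{\varepsilon_0}t_1\right).
\ee
Taking the square root and relabeling the constants (absorbing the factor $2$ into $C_2$) yields $\noab{\mathbb{V}[\h]}_{L^1(D)}^{\fl 1 2}\leq \ab{D}^{\fl 1 2}C_1\exp\left(\fl{C_2}{\varepsilon_0}t_1\right)$.

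Substituting this back into the consistency estimate merges the two $\ab{D}^{\fl 1 2}$ factors into a single $\ab{D}$ and gives the claimed bound
\be
\noab {\mathbb{E}[\h(w;\vx,t_1)]-E_M[\h(w;\vx,t_1)]}_{L^2(\Omega;L^1(D))}
\leq C_1 \ab{D} \exp\left(\fl {C_2} {\varepsilon_0} t_1\right)M^{-\fl 1 2}.
\ee
Convergence in $L^2(\Omega;L^1(D))$ as $M\rightarrow\infty$ is then immediate, since for fixed $0<t_1<\infty$ the right-hand side is a finite constant times $M^{-\fl 1 2}\rightarrow 0$.

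The only genuinely nontrivial ingredient is the uniform-in-$w$ finiteness of the variance, which is precisely what \cref{cor: uppbound} guarantees: without the lower bound on the density from \cref{inequality2} and the consequent control of $T$ and $\mU$, the second moment $\mathbb{E}[\h^2]$ could fail to be bounded uniformly over the sample space, and the $L^1(D)$ norm of the variance could be infinite. I therefore expect the real content to reside in this uniform moment bound rather than in the final algebraic assembly, which is routine.
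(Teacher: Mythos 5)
Your proposal is correct and follows essentially the same route as the paper: the paper's proof likewise invokes \cref{thm:mcconsis} and then closes the argument via the single chain $\noab{\mathbb{V}[\h]}_{L^1(D)}^{\fl 1 2}\leq \noab{\mathbb{E}[\h^2]}_{L^1(D)}^{\fl 1 2}\leq \abs{D}^{\fl 1 2}C_1\exp\left(\fl{C_2}{\varepsilon_0}t_1\right)$, which is exactly your variance-by-second-moment bound combined with \cref{cor: uppbound} and the $\abs{D}$ factor from integrating a uniform bound. Your write-up merely makes explicit the steps the paper compresses into one line.
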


\begin{proof}
It only needs to note that
\be
\noab{\mathbb{V}[\h(w;\vx,t_1)]}_{L^1(D)}^{\fl 1 2}\leq \noab{\mathbb{E}[\h^2(w;\vx,t_1)]}_{L^1(D)}^{\fl 1 2}\leq \abs{D}^{\fl 1 2} C_1 \exp\left({\fl {C_2}{\varepsilon_0}t_1}\right).
\ee 
\end{proof}


\subsection{Monte Carlo method with fully discrete scheme}

To complete the error analysis, we need to consider the Monte Carlo method coupled with the fully discrete scheme for the BGK equation, which includes discretization in time, physical space and velocity space. The details are given in the \cref{appendix}. Simply speaking, we are using the Gauss quadrature in the velocity space, second order IMEX-RK scheme for time discretization, and second order MUSCL finite volume scheme for spatial discretization (under the hyperbolic CFL condition $\Delta t\leq C\Delta x$). Overall, this leads to a second order positivity-preserving and asymptotic-preserving scheme for the deterministic BGK equation. In the following, we assume that the velocity discretization is accurate enough and ignore the work and error in velocity space. It is then reasonable to assume the numerical solution ${\h}_{\Delta x,\Delta t}(w;\vx,t_1)$, computed with mesh size $\Delta x$ and time step $\Delta t$ corresponding to initial data $f_0(w;\vx,\vv)$ up to time $t_1$, satisfies the following error estimate point-wise in $w$:
\begin{assumption}
\label{ass:conv}
For $0<t_1 < \infty$, we have
\be
\noab {{\h}(w;\vx,t_1)-{\h}_{\Delta x,\Delta t}(w;\vx,t_1)}_{L^1(D)}\leq C(w)\left({(\Delta x)}^2+{(\Delta t)}^2\right)\leq C_w(\Delta x)^2,
\ee
where the constant $C(w)$ has an upper bound $C_w$.
\end{assumption}

The MC estimate of the expectation $\mathbb{E}[\h(w;\vx,t)]$ is now given by
\be
\label{def:nummc}
\mathbb{E}[\h(w;\vx,t)]\approx E_M[\h_{\Delta x,\Delta t}(w;\vx,t)]:=\fl 1 M \sum\limits_{i=1}^{M}{\h}^{i}_{\Delta x,\Delta t}(w;\vx,t).
\ee
We have
\begin{theorem}
\label{thm:nummcconsis}
For any M $\in \mathbb{N}^+$, at time $t=t_1$,
\be
 \noab {\mathbb{E}[\h(w;\vx,t_1)]-E_M[\h_{\Delta x,\Delta t}(w;\vx,t_1)]}_{L^2(\Omega;L^1(D))}
\leq  M^{-\fl 1 2} \ab{D}^{\fl 1 2} \noab{\mathbb{V}[\h(w;\vx,t_1)]}_{L^1(D)}^{\fl 1 2}+C_w(\Delta x)^2.
\ee
\end{theorem}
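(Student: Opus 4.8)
The plan is to split the total error by the triangle inequality into a pure Monte Carlo sampling error, involving only the exact solutions, and a deterministic discretization error coming from replacing each exact sample by its numerically computed counterpart. Writing
\[
\mathbb{E}[\h(w;\vx,t_1)]-E_M[\h_{\Delta x,\Delta t}(w;\vx,t_1)]
=\big(\mathbb{E}[\h(w;\vx,t_1)]-E_M[\h(w;\vx,t_1)]\big)
+\big(E_M[\h(w;\vx,t_1)]-E_M[\h_{\Delta x,\Delta t}(w;\vx,t_1)]\big),
\]
the first bracket is controlled directly by \cref{thm:mcconsis}, which already supplies the bound $M^{-1/2}|D|^{1/2}\noab{\mathbb{V}[\h(w;\vx,t_1)]}_{L^1(D)}^{1/2}$.

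For the second bracket I would use the linearity of the sample mean together with the triangle inequality in $L^2(\Omega;L^1(D))$. Since $E_M[\h]-E_M[\h_{\Delta x,\Delta t}]=\frac{1}{M}\sum_{i=1}^M(\h^i-\h^i_{\Delta x,\Delta t})$, one obtains
\[
\noab{E_M[\h]-E_M[\h_{\Delta x,\Delta t}]}_{L^2(\Omega;L^1(D))}
\leq \frac{1}{M}\sum_{i=1}^M\noab{\h^i-\h^i_{\Delta x,\Delta t}}_{L^2(\Omega;L^1(D))}.
\]
Each summand is then bounded by invoking \cref{ass:conv}: the pointwise-in-$w$ estimate $\noab{\h(w;\vx,t_1)-\h_{\Delta x,\Delta t}(w;\vx,t_1)}_{L^1(D)}\leq C_w(\Delta x)^2$ holds with the \emph{deterministic} constant $C_w$ uniform in $w$, so squaring, integrating against $\mathbb{P}$ over $\Omega$, and taking the square root yields $\noab{\h^i-\h^i_{\Delta x,\Delta t}}_{L^2(\Omega;L^1(D))}\leq C_w(\Delta x)^2$ for every $i$. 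Averaging over the $M$ identical bounds produces $C_w(\Delta x)^2$ for the whole discretization contribution.

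Adding the two estimates gives exactly the claimed inequality. The argument is essentially routine; the only point that requires a little care is ensuring that the pointwise-in-$w$ convergence of \cref{ass:conv} lifts to the $L^2(\Omega)$ norm without introducing an extra factor, which is precisely why the \emph{uniform deterministic} bound $C_w$ on $C(w)$ is needed rather than the merely $w$-dependent constant $C(w)$. Beyond bookkeeping of the mixed norm $L^2(\Omega;L^1(D))$ I would expect no genuine obstacle.
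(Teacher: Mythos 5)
Your proposal is correct and follows essentially the same route as the paper: the paper's proof is exactly this triangle-inequality split into the sampling error (bounded by \cref{thm:mcconsis}) and the discretization error (bounded by \cref{ass:conv}), with the latter step left implicit. Your expansion of how the pointwise-in-$w$ bound from \cref{ass:conv} lifts to the $L^2(\Omega;L^1(D))$ norm via the uniform constant $C_w$ is precisely the bookkeeping the paper omits.
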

\begin{proof}
\be
\bal
\noab {\mathbb{E}[\h(w;\vx,t_1)]-E_M[\h_{\Delta x,\Delta t}(w;\vx,t_1)]}_{L^2(\Omega;L^1(D))}&\leq \noab {\mathbb{E}[\h]-E_M[\h]}_{L^2(\Omega;L^1(D))}\\
&+\noab{E_M[\h]-E_M[\h_{\Delta x,\Delta t}]}_{L^2(\Omega;L^1(D))}.
\eal
\ee
It is enough to apply \cref{thm:mcconsis} and \cref{ass:conv}.
\end{proof}

The following corollary is a direct result of \cref{thm:nummcconsis}. 
\begin{corollary}
\label{cor:convnummc}
Under assumptions of \cref{thm:unqandext} and \cref{cor: uppbound}, for $0<t_1<\infty$, as $M \rightarrow \infty$ and $\Delta x$, $\Delta t \rightarrow 0$, the MC estimate $E_M[\h_{\Delta x,\Delta t}(w;\vx,t_1)]$ converges in $L^2(\Omega;L^1(D))$ to $\mathbb{E}[\h(w;\vx,t_1)]$. Furthermore, for any M $\in \mathbb{N}^+$,  there holds the error bound:
\be
\noab {\mathbb{E}[\h(w;\vx,t_1)]-E_M[\h_{\Delta x,\Delta t}(w;\vx,t_1)]}_{L^2(\Omega;L^1(D))}
\leq C_1\abs{D} \exp\left(\fl {C_2}{\varepsilon_0}t_1\right)M^{-\fl 1 2}+C_w(\Delta x)^2.
\ee
\end{corollary}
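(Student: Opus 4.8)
The plan is to obtain this directly from \cref{thm:nummcconsis} by converting the variance factor $\noab{\mathbb{V}[\h(w;\vx,t_1)]}_{L^1(D)}^{1/2}$ into an explicit deterministic quantity, in exactly the same way the variance was controlled for the continuous estimate in the proof of \cref{thm:convmc}. Indeed, \cref{thm:nummcconsis} already separates the total error into a statistical part proportional to $M^{-1/2}\noab{\mathbb{V}[\h(w;\vx,t_1)]}_{L^1(D)}^{1/2}$ and a deterministic discretization part $C_w(\Delta x)^2$, so the only remaining work is to bound the variance factor; the $(\Delta x)^2$ term may be carried along unchanged.

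First I would bound the variance by the second moment: since $\mathbb{V}[\h]=\mathbb{E}[\h^2]-(\mathbb{E}[\h])^2\leq \mathbb{E}[\h^2]$ pointwise in $\vx$, it follows that $\noab{\mathbb{V}[\h(w;\vx,t_1)]}_{L^1(D)}^{1/2}\leq \noab{\mathbb{E}[\h^2(w;\vx,t_1)]}_{L^1(D)}^{1/2}$. Next I would invoke the uniform upper bound from \cref{cor: uppbound}, which guarantees that each of $\rho$, $\ab{\mU}$, $T$ — and hence the generic macroscopic quantity $\h$ — is bounded by $C_1\exp(C_2 t_1/\varepsilon_0)$ uniformly in $w$ and $\vx$. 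Squaring this bound and taking the expectation yields $\mathbb{E}[\h^2(w;\vx,t_1)]\leq C_1^2\exp(2C_2 t_1/\varepsilon_0)$, and integrating over the bounded domain $D$ gives $\noab{\mathbb{E}[\h^2]}_{L^1(D)}^{1/2}\leq \ab{D}^{1/2}C_1\exp(C_2 t_1/\varepsilon_0)$.

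Substituting this estimate into the bound of \cref{thm:nummcconsis} merges the two factors of $\ab{D}^{1/2}$ into a single $\ab{D}$ and produces the claimed inequality $\noab{\cdots}_{L^2(\Omega;L^1(D))}\leq C_1\ab{D}\exp(C_2 t_1/\varepsilon_0)M^{-1/2}+C_w(\Delta x)^2$ after relabeling constants. The convergence assertion is then immediate: for fixed $t_1$ and $\varepsilon_0$ the right-hand side tends to zero as $M\to\infty$ and $\Delta x,\Delta t\to 0$, since $M^{-1/2}\to 0$ and $(\Delta x)^2\to 0$.

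Because every analytical ingredient is already in place, this is a routine combination rather than a new argument, and there is no genuine obstacle at the level of the corollary itself. The one point deserving care is that the variance control relies essentially on the \emph{uniform-in-$w$} bound of \cref{cor: uppbound} — not merely a pointwise-in-$w$ estimate — since this is what allows the constant $C_1$ to be chosen independently of the sample and hence survive the expectation over $\Omega$. That uniformity, which is itself inherited from the well-posedness hypotheses of \cref{thm:unqandext}, rather than anything proved here, is the substantive content underlying the bound.
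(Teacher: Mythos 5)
Your proposal is correct and follows essentially the paper's own route: the paper presents this corollary as a direct consequence of \cref{thm:nummcconsis}, with the variance factor controlled exactly as in the proof of \cref{thm:convmc}, namely $\noab{\mathbb{V}[\h]}_{L^1(D)}^{\fl 1 2}\leq \noab{\mathbb{E}[\h^2]}_{L^1(D)}^{\fl 1 2}\leq \abs{D}^{\fl 1 2}C_1\exp\left(\fl{C_2}{\varepsilon_0}t_1\right)$, using the uniform-in-$w$ bound of \cref{cor: uppbound}. Your observation that this uniformity is the substantive ingredient is accurate, and no further argument is needed.
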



\section{Control variate multilevel Monte Carlo method}
\label{sec:cvmlmc}

In this section we first introduce the multilevel Monte Carlo method and then following \cite{Dimarco2019} we discuss the use of control variate techniques to optimize its variance reduction properties locally using two subsequent levels or globally among all levels.

\subsection{Multilevel Monte Carlo method}

The MLMC method is defined as a multilevel discretization in $\vx$ and $t$ with a level $l$ dependent number of samples $M_l$. Suppose we have a nested triangulation $\{\mathcal{T}_l\}_{l=1}^L$ of the spatial domain $D$ ($L\in \mathbb{N}^{+}$ is the number of levels) such that the mesh size $\Delta x_l$ at level $l$ satisfies
\be
\Delta x_l=\sup\{diam(K):K\in \mathcal{T}_l\}\searrow  \text{   as  } l \nearrow.
\ee
Set $\h^{i}_{\Delta x_0,\Delta t_0}(w;\vx,t):=0$, then given a target level $L$ of spatial resolution, the MLMC estimate of the expectation $\mathbb{E}[\h(w;\vx,t)]$ is given as follows
\be
\label{def:nummlmc}
\bal
\mathbb{E}[\h(w;\vx,t)]&\approx E^L[\h_{\Delta x_L,\Delta t_L}(w;\vx,t)]\\
&:= \sum\limits_{l=1}^{L}E_{M_l}\left[ {\h}^{}_{\Delta x_l,\Delta t_l}(w;\vx,t)- {{\h}^{}_{\Delta x_{l-1},\Delta t_{l-1}}}(w;\vx,t)\right]\\
&=\sum\limits_{l=1}^{L}\sum\limits_{i=1}^{M_l} \fl 1 {M_l} \left[ {\h}^{i}_{\Delta x_l,\Delta t_l}(w;\vx,t)- {{\h}^{i}_{\Delta x_{l-1},\Delta t_{l-1}}}(w;\vx,t)\right].
\eal
\ee
Hence what we really sample is the difference of solutions at two consecutive levels. At each level $l$, we generate $M_l$ i.i.d. samples $f_0^i$, $i=1,\dots,M_l$, of the initial data $f_0$ on meshes $\Delta x_l$ and $\Delta x_{l-1}$ respectively, and then use the fully discrete scheme for the BGK equation (\ref{eqn:bgkwithrandomness}) to advance solutions ${\h}^{i}_{\Delta x_l,\Delta t_l}$ and ${\h}^{i}_{\Delta x_{l-1},\Delta t_{l-1}}$ to certain time $t$.

To simplify the notation, we set $\h_{\Delta x_0,\Delta t_0}(w;\vx,t):=0$ and define the random variable $Y_l := {\h}^{}_{\Delta x_l,\Delta t_l}(w;\vx,t)- {{\h}^{}_{\Delta x_{l-1},\Delta t_{l-1}}}(w;\vx,t)$, and the specific samples $Y_l^i := {\h}^{i}_{\Delta x_l,\Delta t_l}(w;\vx,t)- {{\h}^{i}_{\Delta x_{l-1},\Delta t_{l-1}}}(w;\vx,t)$. We have the following consistency and convergence results for the estimator \cref{def:nummlmc}.
\begin{theorem}
\label{thm:mlmcconsis}
For any $M_l \in \mathbb{N}^+$, $l=1,\dots,L$, at time $t=t_1$,
\be
\begin{split}
 \noab {\mathbb{E}[\h(w;\vx,t_1)]-E^L[\h_{\Delta x_L,\Delta t_L}(w;\vx,t_1)]}_{L^2(\Omega;L^1(D))} &\leq C_w{({\Delta x}_L)}^2\\
 &+ \abs{D}^{\fl 1 2} \sum\limits_{l=1}^{L}{{M_l}^{-\fl 1 2}}\noab{\mathbb{V}[Y_l]}_{L^1(D)}^{\fl 1 2}.
 \end{split}
\ee
\end{theorem}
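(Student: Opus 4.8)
The plan is to decompose the total error into a deterministic discretization (bias) contribution and a statistical sampling contribution, exploiting the telescoping structure of the estimator. First I would observe that, because $\h_{\Delta x_0,\Delta t_0}=0$, the increments $Y_l$ telescope to $\sum_{l=1}^{L}Y_l=\h_{\Delta x_L,\Delta t_L}$, so that the mean of the MLMC estimator equals $\mathbb{E}[\h_{\Delta x_L,\Delta t_L}]$. This lets me write
\[
\mathbb{E}[\h] - E^L[\h_{\Delta x_L,\Delta t_L}] = \big(\mathbb{E}[\h]-\mathbb{E}[\h_{\Delta x_L,\Delta t_L}]\big) + \sum_{l=1}^{L}\big(\mathbb{E}[Y_l]-E_{M_l}[Y_l]\big),
\]
after which a single application of the triangle inequality in $L^2(\Omega;L^1(D))$ separates the bias term from the $L$ per-level sampling errors.

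For the bias term I would note that $\mathbb{E}[\h]-\mathbb{E}[\h_{\Delta x_L,\Delta t_L}]$ is deterministic, so its $L^2(\Omega;L^1(D))$ norm coincides with its $L^1(D)$ norm (using $\mathbb{P}(\Omega)=1$). Interchanging the Bochner expectation with the $L^1(D)$ norm via Jensen's inequality gives $\noab{\mathbb{E}[\h-\h_{\Delta x_L,\Delta t_L}]}_{L^1(D)} \leq \mathbb{E}[\noab{\h-\h_{\Delta x_L,\Delta t_L}}_{L^1(D)}]$, and \cref{ass:conv} bounds the integrand pointwise in $w$ by $C_w(\Delta x_L)^2$. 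Since this bound is uniform in $w$, it survives the expectation and yields the first term $C_w(\Delta x_L)^2$ on the right-hand side.

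For the sampling part I would handle each level separately. The key observation is that every increment $Y_l$ is itself a random variable in $L^2(\Omega;L^1(D))$ whose $M_l$ realizations $Y_l^i$ are i.i.d.\ copies, inherited directly from the i.i.d.\ samples of the initial datum $f_0$. Hence \cref{thm:mcconsis} applies verbatim with $Y_l$ playing the role of $\h$ and with $M_l$ samples, giving $\noab{\mathbb{E}[Y_l]-E_{M_l}[Y_l]}_{L^2(\Omega;L^1(D))} \leq M_l^{-\fl 1 2}\abs{D}^{\fl 1 2}\noab{\mathbb{V}[Y_l]}_{L^1(D)}^{\fl 1 2}$. Summing these $L$ contributions and adding the bias bound produces exactly the claimed estimate.

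The step requiring the most care — rather than a genuine obstacle — is verifying that \cref{thm:mcconsis} is legitimately applicable level by level: one must confirm that each $Y_l$ lies in $L^2(\Omega;L^1(D))$, which follows from the moment bounds of \cref{cor: uppbound} applied to both $\h_{\Delta x_l,\Delta t_l}$ and $\h_{\Delta x_{l-1},\Delta t_{l-1}}$, and that the samples within a given level share a common law and are independent. I would emphasize that the estimate relies only on the triangle inequality \emph{across} levels, so no independence between different levels is needed here; a sharper orthogonality-based bound (replacing the sum by a root-sum-of-squares) would require such cross-level independence, but that refinement is not part of the present consistency statement.
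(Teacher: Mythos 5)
Your proof is correct and follows essentially the same route as the paper: the identical bias--plus--sampling decomposition via the telescoping identity and the triangle inequality, with the bias controlled by \cref{ass:conv} and each per-level term giving $M_l^{-1/2}\abs{D}^{1/2}\noab{\mathbb{V}[Y_l]}_{L^1(D)}^{1/2}$. The only cosmetic difference is that you invoke \cref{thm:mcconsis} level by level as a black box, while the paper inlines the same computation (the $\abs{D}^{1/2}$ embedding of $L^1(D)$ into $L^2(D)$ followed by \cref{lemma:l2idt}) inside the combined statistical term.
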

\begin{proof}
\be
\bal
&\noab {\mathbb{E}[\h]-E^{L}[\h_{\Delta x_L,\Delta t_L}]}_{L^2(\Omega;L^1(D))}=\noab {\mathbb{E}[\h]-\sum\limits_{l=1}^{L}E_{M_l}^{}[Y_l]}_{L^2(\Omega;L^1(D))}\\
&\leq \noab {\mathbb{E}[\h]-\sum\limits_{l=1}^{L}\mathbb{E}[Y_l]}_{L^2(\Omega,L^1(D))}+\noab{\sum\limits_{l=1}^{L}E_{M_l}^{}[Y_l]-\sum\limits_{l=1}^{L}\mathbb{E}[Y_l]}_{L^2(\Omega;L^1(D))}\\
&\leq \noab{\mathbb{E}[\h]-\mathbb{E}[ {\h}_{\Delta x_L,\Delta t_L}]}_{L^1(D)}+\abs{D}^{\fl 1 2}\sum\limits_{l=1}^{L}\noab{E_{M_l}^{}[Y_l]-\mathbb{E}[Y_l]}_{L^2(\Omega;L^2(D))}\\
&=I+II.
\eal
\ee
For part $I$, \cref{ass:conv} yields,
\be
I= \noab {{\h}(w;\vx,t_1)-{\h}_{\Delta x_L,\Delta t_L}(w;\vx,t_1)}_{L^1(\Omega;L^1(D))}\leq C_w{({\Delta x}_L)}^2.
\ee
For part $II$, using \cref{lemma:l2idt},
\be
{II}=\abs{D}^{\fl 1 2} \sum\limits_{l=1}^{L}{{M_l}^{-\fl 1 2}}\noab{Y_l^i-\mathbb{E}[Y_l]}_{L^2(\Omega;L^2(D))}=\abs{D}^{\fl 1 2} \sum\limits_{l=1}^{L}{{M_l}^{-\fl 1 2}}\noab{\mathbb{V}[Y_l]}_{L^1(D)}^{\fl 1 2}.
\ee
\end{proof}

\begin{theorem}
\label{thm:convmlmc}
Under the assumptions of \cref{thm:unqandext} and \cref{cor: uppbound}, 
for $0<t_1<\infty$, as $M_l \rightarrow \infty$ and $\Delta x$, $\Delta t \rightarrow 0$, the MLMC estimate $E^L[\h_{\Delta x_L,\Delta t_L}(w;\vx,t_1)]$ converges in $L^2(\Omega;L^1(D))$  to $\mathbb{E}[\h(w;\vx,t_1)]$. Furthermore, there holds the error bound:
\be
\bal
&\noab {\mathbb{E}[\h(w;\vx,t_1)]-E^L[\h_{\Delta x_L,\Delta t_L}(w;\vx,t_1)]}_{L^2(\Omega;L^1(D))}\\
&\leq C_w {({\Delta x}_L)}^2
+\left(C_w\abs{D}^{\fl 1 2}(\Delta x_1)^2+C_1\abs{D}\exp\left(\fl {C_2}{\varepsilon_0}t_1\right)\right)M_1^{-\fl 1 2}\\
&+\sum\limits_{l=2}^{L}C_w\abs{D}^{\fl 1 2}\left((\Delta x_l)^2+(\Delta x_{l-1})^2\right){M_{l}^{-\fl 1 2 }}.
\eal
\ee
\end{theorem}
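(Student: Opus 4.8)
The plan is to start from the consistency estimate of \cref{thm:mlmcconsis} and control each variance term $\noab{\mathbb{V}[Y_l]}_{L^1(D)}^{1/2}$ appearing in its telescopic sum, treating the base level $l=1$ and the higher levels $l\geq 2$ separately. The workhorse inequality is the pointwise bound $\mathbb{V}[Y_l]\leq \mathbb{E}[Y_l^2]$, which upon integration over $D$ gives $\noab{\mathbb{V}[Y_l]}_{L^1(D)}^{1/2}\leq \noab{Y_l}_{L^2(\Omega;L^2(D))}$. Thus it suffices to bound the second moment of each increment $Y_l$, after which the result is assembled by simply plugging these bounds into \cref{thm:mlmcconsis}.

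For the base level, the convention $\h_{\Delta x_0,\Delta t_0}:=0$ gives $Y_1=\h_{\Delta x_1,\Delta t_1}$. First I would add and subtract the exact macroscopic quantity $\h$ and use the triangle inequality, $\noab{Y_1}_{L^2(\Omega;L^2(D))}\leq \noab{\h_{\Delta x_1,\Delta t_1}-\h}_{L^2(\Omega;L^2(D))}+\noab{\h}_{L^2(\Omega;L^2(D))}$. The first term is the pure discretization error controlled by \cref{ass:conv}, yielding order $C_w(\Delta x_1)^2$; the second is controlled by the uniform-in-$(w,\vx)$ bound of \cref{cor: uppbound}, which upon integrating over $D$ produces the factor $\abs{D}^{1/2}C_1\exp(C_2 t_1/\varepsilon_0)$. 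Multiplying by the prefactor $\abs{D}^{1/2}M_1^{-1/2}$ from \cref{thm:mlmcconsis} reproduces exactly the $M_1^{-1/2}$ coefficient in the claimed bound, where the exact-solution contribution accounts for the surviving exponential term.

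For the higher levels $l\geq 2$, the key observation — and the whole reason MLMC reduces variance — is that both $\h_{\Delta x_l,\Delta t_l}$ and $\h_{\Delta x_{l-1},\Delta t_{l-1}}$ approximate the \emph{same} exact solution $\h$. Writing $Y_l=(\h_{\Delta x_l,\Delta t_l}-\h)-(\h_{\Delta x_{l-1},\Delta t_{l-1}}-\h)$ and applying the triangle inequality, each piece is a pure discretization error to which \cref{ass:conv} applies, giving $\noab{\mathbb{V}[Y_l]}_{L^1(D)}^{1/2}\leq C_w\bigl((\Delta x_l)^2+(\Delta x_{l-1})^2\bigr)$; here the exact-solution contribution cancels, so no exponential term appears. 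Multiplying by $\abs{D}^{1/2}M_l^{-1/2}$ and summing over $l$ yields the remaining terms. Collecting the base-level contribution, the higher-level sum, and the bias term $C_w(\Delta x_L)^2$ carried over directly from \cref{thm:mlmcconsis} produces the stated estimate; convergence in $L^2(\Omega;L^1(D))$ then follows because every term tends to zero as each $M_l\to\infty$ and $\Delta x_l,\Delta t_l\to 0$.

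The main bookkeeping obstacle is a norm mismatch. The variance $\noab{\mathbb{V}[Y_l]}_{L^1(D)}^{1/2}$ is naturally an $L^2$-in-space quantity, since it arises through the orthogonality identity of \cref{lemma:l2idt} in $L^2(\Omega;L^2(D))$, whereas the a priori error of \cref{ass:conv} is stated in $L^1(D)$. Bridging the two requires the uniform $L^\infty$ control of the solutions from \cref{cor: uppbound}, so that the $L^1$ and $L^2$ discretization errors are comparable up to constants already present in the estimate, or equivalently reading \cref{ass:conv} in the $L^2(D)$ norm. Once this conversion is granted, the remaining steps are routine triangle-inequality splittings and the telescoping collection already set up by \cref{thm:mlmcconsis}.
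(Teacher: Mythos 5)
Your argument is essentially the paper's own proof: starting from \cref{thm:mlmcconsis}, you bound each variance term by the second moment of $Y_l$, split $Y_1$ through the exact solution to obtain the $C_w(\Delta x_1)^2+\abs{D}^{\fl 1 2}C_1\exp\left(\fl{C_2}{\varepsilon_0}t_1\right)$ contribution via \cref{ass:conv} and \cref{cor: uppbound}, and for $l\ge 2$ exploit the telescoping cancellation of the exact solution so that only $C_w\left((\Delta x_l)^2+(\Delta x_{l-1})^2\right)$ survives, exactly as in the paper. Your closing remark on the $L^1(D)$-versus-$L^2(D)$ mismatch is well taken---the paper silently applies \cref{ass:conv} in the $L^2(\Omega;L^2(D))$ norm---but the clean repair is your second option (reading \cref{ass:conv} in $L^2(D)$), since the interpolation route $\noab{g}_{L^2(D)}\le\left(\noab{g}_{L^\infty(D)}\noab{g}_{L^1(D)}\right)^{\fl 1 2}$ would degrade the discretization rate from $(\Delta x)^2$ to first order.
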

\begin{proof}
From \cref{thm:mlmcconsis}
for $l=1$,
\be
\bal
\noab{Y_1^i-\mathbb{E}[Y_1]}_{L^2(\Omega;L^2(D))}&=\noab{{\h}^i_{\Delta x_1,\Delta t_1}-\mathbb{E}[{\h}^i_{\Delta x_1,\Delta t_1}]}_{L^2(\Omega;L^2(D))}\\
&\leq \noab{{\h}^i_{\Delta x_1,\Delta t_1}}_{L^2(\Omega;L^2(D))}\\
&\leq \noab{{\h}^i_{\Delta x_1,\Delta t_1}-{\h}^i}_{L^2(\Omega;L^2(D))}+\noab{{\h}^i}_{L^2(\Omega;L^2(D))}\\
&\leq C_w(\Delta x_1)^2+\abs{D}^{\fl 1 2}C_1\exp\left(\fl {C_2} {\varepsilon_0} t_1\right),
\eal
\ee
and similarly  for $l \geq 2$,
\be
\bal
\noab{Y_l^i-\mathbb{E}[Y_l]}_{L^2(\Omega;L^2(D))}&\leq \noab{Y_l^i}_{L^2(\Omega;L^2(D))}\\
&=\noab{{\h}^i_{\Delta x_l,\Delta t_l}-{\h}^i_{\Delta x_{l-1},\Delta t_{l-1}}}_{L^2(\Omega;L^2(D))}\\
&\leq \noab{{\h}^i_{\Delta x_l,\Delta t_l}-{\h}^i}_{L^2(\Omega;L^2(D))}+\noab{{\h}^i-{\h}^i_{\Delta x_{l-1},\Delta t_{l-1}}}_{L^2(\Omega;L^2(D))}\\
&\leq  C_w((\Delta x_l)^2+(\Delta x_{l-1})^2).
\eal
\ee

\end{proof}

\subsection{Quasi-optimal and optimal multilevel Monte Carlo method}

In this section we generalize the previous MLMC method following \cite{Dimarco2018}.
To start with, take the $2$ level MLMC method for example. Suppose we have a low fidelity (coarse mesh) approximation $\h_1$ and a high fidelity (fine mesh) approximation $\h_2$, then the 2 level MLMC method with control variate (assume $M_2 \ll M_1$) reads as follows
\be
\label{2lcvmlmc}
\mathbb{E}[\h] \approx \lambda E_{M_1}[\h_1]+E_{M_2}[\h_2-\lambda \h_1],
\ee
where the multiplier $\lambda$ has to be determined in order to minimize the variance. It can be shown that for independent samples the optimal value of $\lambda$ is given by
\begin{equation}
\lambda=\frac{\text{Cov}[\h_1,\h_2]}{\mathbb{V}[\h_1]} \approx \frac{\sum\limits_{i=1}^{M_2} (q^i_1-\bar{q}_1)(q^i_2-\bar{q}_2)}{\sum\limits_{i=1}^{M_2}(q^i_1-\bar{q}_1)^2},
\label{eq:opt}
\end{equation}
where $\bar{q}_1=E_{M_2}[\h_1]$, $\bar{q}_2=E_{M_2}[\h_2]$ and in the above expression the covariance and variance are estimated directly from the Monte Carlo samples.

Generally, suppose we have $L$ levels of solutions $\{\h_{\Delta x_i,\Delta t_i}\}_{i=1,...,L}$, from coarsest level $\h_{\Delta x_1,\Delta t_1}$ to finest level $\h_{\Delta x_L,\Delta t_L}$. Then the MLMC method with control variates is given by
\be
\label{def:cvmlmce}
\bal
\mathbb{E}[\h(w;\vx,t)]&\approx E_{CV}^L[\h_{\Delta x_L,\Delta t_L}]\\
 &:=\prod\limits_{i=1}^L \lambda_i E_{M_1}[\h_{\Delta x_1,\Delta t_1}]+\sum\limits_{l=2}^L\prod\limits_{i=l}^L\lambda_iE_{M_l}[\h_{\Delta x_l,\Delta t_l}-\lambda_{l-1}\h_{\Delta x_{l-1},\Delta t_{l-1}}].
\eal
\ee
Note that \(\{\lambda_l\}_{l=1}^L\) here are the coefficients to be determined and \(\lambda_L=1\). If we only consider the variance reduction for each pair of consecutive levels, then we can easily get the analogy of \eqref{eq:opt} to estimate $\{\lambda_l\}$, which we refer to as the \emph{quasi-optimal MLMC method}:
\be
\lambda_{l-1}=\fl {\text{Cov}[\h_{\Delta x_{l},\Delta t_{l}},\h_{\Delta x_{l-1},\Delta t_{l-1}}]}{\mathbb{V}[\h_{\Delta x_{l-1},\Delta t_{l-1}}]}\approx \fl {\sum\limits_{i=1}^{M_l}(\h^{i}_{\Delta x_{l},\Delta t_{l}}-{{\bar{\h}}_{\Delta x_{l},\Delta t_{l}}})(\h^i_{\Delta x_{l-1},\Delta t_{l-1}}-{{\bar{\h}}_{\Delta x_{l-1},\Delta t_{l-1}}})}{\sum\limits_{i=1}^{M_l}(\h^i_{\Delta x_{l-1},\Delta t_{l-1}}-{\bar{\h}_{\Delta x_{l-1},\Delta t_{l-1}}})^2},
\label{eq:quasi}
\ee
where \(\bar{\h}_{\Delta x_{l},\Delta t_{l}}=E_{M_l}[\h_{\Delta x_{l},\Delta t_{l}}]$.

However, if we focus on minimizing the overall variance of the estimator \cref{def:cvmlmce} and assume that the levels are independent, then denoting 
\begin{equation}
\hat{\lambda}_l=\prod\limits_{i=l}^{L}\lambda_i,\quad l=1,\ldots, L,
\end{equation}
the optimality conditions yield a tridiagonal system for $\hat{\lambda}_l$: 
\be
\label{eqn:optimallambda}
\bal
&\hat{\lambda}_l\mathbb{V}[\h_{\Delta x_l, \Delta t_l}]-\hat{\lambda}_{l+1}\frac{M_{l}}{M_l+M_{l+1}}\text{Cov}[\h_{\Delta x_{l+1}, \Delta t_{l+1}},\h_{\Delta x_l, \Delta t_l}]\\
&-\hat{\lambda}_{l-1}\frac{M_{l+1}}{M_l+M_{l+1}}\text{Cov}[\h_{\Delta x_{l-1}, \Delta t_{l-1}},\h_{\Delta x_l, \Delta t_l}]=0, \quad l=1,\dots,L-1,
\eal
\ee
where we assumed \(\hat{\lambda}_0=0\), \(\hat{\lambda}_L=1\) and \(\h_{\Delta x_0,\Delta t_0}=0\). A practical way to solve the above tridiagonal system is to rewrite \cref{eqn:optimallambda} in terms of original $\lambda_i$. For simplicity, we denote \(\mathbb{V}[\h_{\Delta x_l, \Delta t_l}]\) by \(\mathbb{V}_l\) and \(\text{Cov}[\h_{\Delta x_{l+1}, \Delta t_{l+1}},\h_{\Delta x_l, \Delta t_l}]\) by \(\text{Cov}_l\) to get
\be
\label{eqn:forwardsub}
\bal
&\lambda_1\mathbb{V}_1-\fl {M_1}{M_1+M_2}\text{Cov}_1=0,\\
&\lambda_2\mathbb{V}_2-\fl {M_2}{M_2+M_3}\text{Cov}_2-\lambda_1\lambda_2 \fl {M_3}{M_2+M_3}\text{Cov}_1=0,\\
&\lambda_3\mathbb{V}_3-\fl {M_3}{M_3+M_4}\text{Cov}_3-\lambda_2\lambda_3 \fl {M_4}{M_3+M_4}\text{Cov}_2=0,\\
&...\\
&\lambda_{L-1}\mathbb{V}_{L-1}-\fl {M_{L-1}}{M_{L-1}+M_{L}}\text{Cov}_{L-2}-\lambda_{L-2}\lambda_{L-1} \fl {M_{L}}{M_{L-1}+M_{L}}\text{Cov}_{L-2}=0,
\eal
\ee
which can be easily solved by recursive substitution. This is what we refer to as the \emph{optimal MLMC method}.

Denote the correlation coefficient of $\h_{\Delta x_{l}, \Delta t_{l}}$ and $\h_{\Delta x_{l+1}, \Delta t_{l+1}}$ by
\be
\label{correlation}
r_{l}=\fl {\text{Cov}[\h_{\Delta x_{l},\Delta t_{l}},\h_{\Delta x_{l+1},\Delta t_{l+1}}]}{\left(\mathbb{V}[\h_{\Delta x_{l+1},\Delta t_{l+1}}]\mathbb{V}[\h_{\Delta x_{l},\Delta t_{l}}]\right)^{\fl 1 2}}, 
\ee
we can prove the following consistency and convergence results for the estimator \cref{def:cvmlmce}:
\begin{theorem}
\label{thm:cvmlmcconsis}
For any $M_l\in \mathbb{N}^+$, $l=1,\dots,L$, if $\{\lambda_l\}$ are quasi-optimal and exact, i.e.,
\be
\lambda_l=\fl{\text{Cov}[\h_{\Delta x_{l},\Delta t_{l}},\h_{\Delta x_{l+1},\Delta t_{l+1}}]}{\mathbb{V}[\h_{\Delta x_{l},\Delta t_{l}}]},
\ee
then at time $t=t_1$,
\be
\bal
&\noab {\mathbb{E}[\h(w;\vx,t_1)]-E_{CV}^L[\h_{\Delta x_L,\Delta t_L}(w;\vx,t_1)]}_{L^2(\Omega;L^1(D))}\\
&\leq C_w{({\Delta x}_L)}^2+ \abs{D}^{\fl 1 2} {{M_1}^{-\fl 1 2}}\hat{\lambda}_1\noab{\mathbb{V}[\h_{\Delta x_{1},\Delta t_{1}}]}_{L^1(D)}^{\fl 1 2}\\
&+\abs{D}^{\fl 1 2} \sum\limits_{l=2}^{L}{{M_l}^{-\fl 1 2}}\hat{\lambda}_l(1-r_{l-1}^2)^{\fl 1 2}\noab{\mathbb{V}[\h_{\Delta x_{l},\Delta t_{l}}]}_{L^1(D)}^{\fl 1 2}.
\eal
\ee
\end{theorem}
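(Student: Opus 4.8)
The plan is to follow the two-part decomposition used in the proof of \cref{thm:mlmcconsis}, separating the deterministic (bias) error from the statistical (sampling) error. Writing $\h_l := \h_{\Delta x_l,\Delta t_l}$ for brevity and recalling that $\hat{\lambda}_l = \prod_{i=l}^L \lambda_i$ satisfies $\hat{\lambda}_{l-1} = \lambda_{l-1}\hat{\lambda}_l$ with $\hat{\lambda}_L = 1$, I would first split
\[
\noab{\mathbb{E}[\h] - E_{CV}^L[\h_L]}_{L^2(\Omega;L^1(D))} \le \noab{\mathbb{E}[\h] - \mathbb{E}[E_{CV}^L]}_{L^2(\Omega;L^1(D))} + \noab{E_{CV}^L - \mathbb{E}[E_{CV}^L]}_{L^2(\Omega;L^1(D))}
\]
by the triangle inequality, and treat the two terms separately.

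For the first (bias) term, the key observation is that the control-variate estimator \cref{def:cvmlmce} is unbiased for $\mathbb{E}[\h_{\Delta x_L,\Delta t_L}]$. Taking expectations and using linearity, $\mathbb{E}[E_{CV}^L] = \hat{\lambda}_1\mathbb{E}[\h_1] + \sum_{l=2}^L \hat{\lambda}_l(\mathbb{E}[\h_l] - \lambda_{l-1}\mathbb{E}[\h_{l-1}])$; substituting $\hat{\lambda}_l\lambda_{l-1} = \hat{\lambda}_{l-1}$ makes the sum telescope, leaving $\mathbb{E}[E_{CV}^L] = \hat{\lambda}_L\mathbb{E}[\h_L] = \mathbb{E}[\h_L]$. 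Since the difference $\mathbb{E}[\h] - \mathbb{E}[\h_L]$ is deterministic in $w$, its $L^2(\Omega;L^1(D))$ norm equals its $L^1(D)$ norm, and \cref{ass:conv} (after passing the norm through the expectation by convexity of the norm) bounds it by $C_w(\Delta x_L)^2$.

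For the second (statistical) term, I would write the centered estimator as a sum of independent, zero-mean level contributions,
\[
E_{CV}^L - \mathbb{E}[E_{CV}^L] = \hat{\lambda}_1\bigl(E_{M_1}[\h_1]-\mathbb{E}[\h_1]\bigr) + \sum_{l=2}^L \hat{\lambda}_l\bigl(E_{M_l}[\h_l - \lambda_{l-1}\h_{l-1}] - \mathbb{E}[\h_l - \lambda_{l-1}\h_{l-1}]\bigr),
\]
apply the triangle inequality over $l$, and then apply the single-level estimate from \cref{thm:mcconsis} (which rests on \cref{lemma:l2idt}) to each contribution, giving a factor $\abs{D}^{1/2}M_l^{-1/2}\noab{\mathbb{V}[Z_l]}_{L^1(D)}^{1/2}$ with $Z_1 = \h_1$ and $Z_l = \h_l - \lambda_{l-1}\h_{l-1}$. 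The remaining ingredient is the variance identity for the control variate: expanding $\mathbb{V}[\h_l - \lambda_{l-1}\h_{l-1}] = \mathbb{V}[\h_l] - 2\lambda_{l-1}\text{Cov}[\h_l,\h_{l-1}] + \lambda_{l-1}^2\mathbb{V}[\h_{l-1}]$ and inserting the quasi-optimal value $\lambda_{l-1} = \text{Cov}[\h_{l-1},\h_l]/\mathbb{V}[\h_{l-1}]$ collapses the last two terms to $-\text{Cov}[\h_{l-1},\h_l]^2/\mathbb{V}[\h_{l-1}]$, so that by the definition \cref{correlation} of $r_{l-1}$ one obtains $\mathbb{V}[\h_l - \lambda_{l-1}\h_{l-1}] = (1-r_{l-1}^2)\mathbb{V}[\h_l]$. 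This produces exactly the $(1-r_{l-1}^2)^{1/2}$ factors in the stated bound.

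I expect the main obstacle to be the bookkeeping that certifies unbiasedness — arranging the products $\hat{\lambda}_l$ and the multipliers $\lambda_{l-1}$ so that the expectation telescopes to $\mathbb{E}[\h_L]$ — together with the clean variance identity; the remaining steps (the triangle inequality and the per-level Monte Carlo estimate) are routine by analogy with \cref{thm:mcconsis,thm:mlmcconsis}. A secondary technical point is that the multipliers $\lambda_l$, hence $\hat{\lambda}_l$ and $r_l$, are in general functions of $\vx$; to pull them outside the $L^1(D)$ norms as in the statement one treats them as deterministic constants in $\vx$, which is the convention implicit in \cref{def:cvmlmce}.
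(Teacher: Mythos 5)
Your proposal is correct and follows essentially the same route as the paper's proof: the bias/statistical splitting and per-level application of \cref{lemma:l2idt} exactly as in \cref{thm:mlmcconsis}, combined with the variance identity $\mathbb{V}[\h_{\Delta x_l,\Delta t_l}-\lambda_{l-1}\h_{\Delta x_{l-1},\Delta t_{l-1}}]=(1-r_{l-1}^2)\mathbb{V}[\h_{\Delta x_l,\Delta t_l}]$ for the quasi-optimal $\lambda_{l-1}$, which is the only new ingredient the paper records explicitly. Your additional bookkeeping (the telescoping of $\hat{\lambda}_l\lambda_{l-1}=\hat{\lambda}_{l-1}$ certifying unbiasedness for $\mathbb{E}[\h_{\Delta x_L,\Delta t_L}]$, and the remark that the $\hat{\lambda}_l$ are treated as constants in $\vx$) simply makes explicit what the paper leaves implicit in the phrase ``the proof is similar to \cref{thm:mlmcconsis}.''
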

\begin{proof}
The proof is similar to \cref{thm:mlmcconsis}. All we need is to note that when $\lambda$ is quasi-optimal, we have for $l \geq 2$,
\be
\bal
\mathbb{V}[\h_{\Delta x_l,\Delta t_l}-\lambda_{l-1}\h_{\Delta x_{l-1}, \Delta t_{l-1}}]&=\mathbb{V}[\h_{\Delta x_l,\Delta t_l}]+\lambda_{l-1}^2 \mathbb{V}[\h_{\Delta x_{l-1},\Delta t_{l-1}}]
\\
&-2\lambda_{l-1}\text{Cov}[\h_{\Delta x_l,\Delta t_l},\h_{\Delta x_{l-1},\Delta t_{l-1}}]\\
&= (1-r_{l-1}^2)\mathbb{V}[\h_{\Delta x_l,\Delta t_l}].
\eal
\ee
\end{proof}

\begin{theorem}
Under the assumptions of \cref{thm:unqandext} and \cref{cor: uppbound}, and if $\{\lambda_l\}$ are quasi-optimal and exact, we have
for $0<t_1<\infty$, as $M_l \rightarrow \infty$ and $\Delta x$, $\Delta t \rightarrow 0$, the quasi-optimal MLMC estimate $E_{CV}^L[\h_{\Delta x_L,\Delta t_L}(w;\vx,t_1)]$ converges in $L^2(\Omega;L^1(D))$  to $\mathbb{E}[\h(w;\vx,t_1)]$ with the error bound
\be
\bal
&\noab {\mathbb{E}[\h(w;\vx,t_1)]-E_{CV}^L[\h_{\Delta x_L,\Delta t_L}(w;\vx,t_1)]}_{L^2(\Omega;L^1(D))}\\
&\leq C_w {({\Delta x}_L)}^2+\sum\limits_{l=2}^{L}C_w\abs{D}^{\fl 1 2}\hat{\lambda}_l{M_{l}^{-\fl 1 2 }}(1-r_{l-1}^2)^{\fl 1 2}(\Delta x_l)^2\\
&+\left(C_w\abs{D}^{\fl 1 2}(\Delta x_1)^2+C_1\abs{D} \exp\left(\fl {C_2}{\varepsilon_0}t_1\right)\right)M_1^{-\fl 1 2}\hat{\lambda}_1.
\eal
\ee
\end{theorem}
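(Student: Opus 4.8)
The plan is to mirror the proof of \cref{thm:convmlmc}, taking the consistency estimate \cref{thm:cvmlmcconsis} as the starting point. That estimate already reduces the total error to the telescoped discretization term $C_w(\Delta x_L)^2$, the coarse-level term $\abs{D}^{\fl12}M_1^{-\fl12}\hat{\lambda}_1\noab{\mathbb{V}[\h_{\Delta x_1,\Delta t_1}]}_{L^1(D)}^{\fl12}$, and the variance-reduced fine-level terms $\abs{D}^{\fl12}M_l^{-\fl12}\hat{\lambda}_l(1-r_{l-1}^2)^{\fl12}\noab{\mathbb{V}[\h_{\Delta x_l,\Delta t_l}]}_{L^1(D)}^{\fl12}$ for $l\ge2$. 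Hence the whole task reduces to bounding the variance norms $\noab{\mathbb{V}[\h_{\Delta x_l,\Delta t_l}]}_{L^1(D)}^{\fl12}$, for which I will repeatedly exploit the identity $\noab{\mathbb{V}[X]}_{L^1(D)}^{\fl12}=\noab{X-\mathbb{E}[X]}_{L^2(\Omega;L^2(D))}$ already used in \cref{lemma:l2idt} and \cref{thm:mcconsis}.

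First I would treat the coarsest level $l=1$. Since subtracting the mean can only decrease the $L^2$ norm, $\noab{\mathbb{V}[\h_{\Delta x_1,\Delta t_1}]}_{L^1(D)}^{\fl12}\le \noab{\h^i_{\Delta x_1,\Delta t_1}}_{L^2(\Omega;L^2(D))}$; inserting the exact solution $\h^i$ and applying the triangle inequality splits this into the discretization error, controlled by $C_w(\Delta x_1)^2$ through \cref{ass:conv}, and $\noab{\h^i}_{L^2(\Omega;L^2(D))}$, controlled by $\abs{D}^{\fl12}C_1\exp(C_2 t_1/\varepsilon_0)$ through \cref{cor: uppbound}. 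Multiplying by $\abs{D}^{\fl12}M_1^{-\fl12}\hat{\lambda}_1$ reproduces the $l=1$ contribution in the stated bound; this step is essentially identical to the $l=1$ estimate in \cref{thm:convmlmc}.

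The delicate part is the levels $l\ge 2$, and I expect it to be the main obstacle. Unlike standard MLMC, where the level difference $Y_l=\h_{\Delta x_l,\Delta t_l}-\h_{\Delta x_{l-1},\Delta t_{l-1}}$ has variance of order $(\Delta x)^4$ directly, here the single-level variance $\mathbb{V}[\h_{\Delta x_l,\Delta t_l}]$ does \emph{not} vanish as the mesh refines (it tends to $\mathbb{V}[\h]$); the smallness is instead carried by the correlation factor $(1-r_{l-1}^2)^{\fl12}$, which does tend to zero because both consecutive levels approach the same exact solution, forcing $r_{l-1}\to 1$. To convert this into a concrete $(\Delta x_l)^2$ rate I would pass through the control-variate increment $Y_l^{CV}=\h_{\Delta x_l,\Delta t_l}-\lambda_{l-1}\h_{\Delta x_{l-1},\Delta t_{l-1}}$, whose variance equals $(1-r_{l-1}^2)\mathbb{V}[\h_{\Delta x_l,\Delta t_l}]$ by the computation in \cref{thm:cvmlmcconsis}, and simultaneously use the splitting $Y_l^{CV}=(\h_{\Delta x_l,\Delta t_l}-\h_{\Delta x_{l-1},\Delta t_{l-1}})+(1-\lambda_{l-1})\h_{\Delta x_{l-1},\Delta t_{l-1}}$. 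The first bracket is $O((\Delta x_l)^2+(\Delta x_{l-1})^2)$ exactly as in \cref{thm:convmlmc}, and for the second I would show $\abs{1-\lambda_{l-1}}=O((\Delta x)^2)$ by writing $\lambda_{l-1}-1=\text{Cov}[\h_{\Delta x_{l-1},\Delta t_{l-1}},\,\h_{\Delta x_l,\Delta t_l}-\h_{\Delta x_{l-1},\Delta t_{l-1}}]/\mathbb{V}[\h_{\Delta x_{l-1},\Delta t_{l-1}}]$ and bounding the numerator by Cauchy--Schwarz together with the level-difference estimate. The genuinely delicate bookkeeping is to reconcile the explicit $(1-r_{l-1}^2)^{\fl12}$ factor inherited from \cref{thm:cvmlmcconsis} with the additional $(\Delta x_l)^2$ decay without double counting the smallness, since $\noab{\mathbb{V}[\h_{\Delta x_l,\Delta t_l}]}_{L^1(D)}^{\fl12}$ is itself only $O(1)$; once this is handled, summing over $l$ and letting $M_l\to\infty$, $\Delta x,\Delta t\to 0$ (so that $r_{l-1}\to1$) yields both the stated error bound and the claimed $L^2(\Omega;L^1(D))$ convergence.
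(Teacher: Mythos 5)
Your plan follows the only route the paper itself offers: this theorem is stated there with \emph{no proof at all}, as an immediate corollary of \cref{thm:cvmlmcconsis} combined with the variance bounds from the proof of \cref{thm:convmlmc}. Your handling of the discretization term $C_w(\Delta x_L)^2$ and of the coarsest level (bounding $\noab{\mathbb{V}[\h_{\Delta x_1,\Delta t_1}]}_{L^1(D)}^{\fl 1 2}$ by $C_w(\Delta x_1)^2+\abs{D}^{\fl 1 2}C_1\exp(C_2t_1/\varepsilon_0)$, hence the stated $l=1$ term) is exactly that implied argument, and is correct.

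The issue is the $l\ge2$ terms, and what you defer as ``delicate bookkeeping'' is not a detail you can postpone: it is a gap that cannot be closed, because the displayed inequality is stronger than anything derivable from the paper's ingredients. Your mechanism --- splitting $Y_l^{CV}$ and estimating $\abs{1-\lambda_{l-1}}$ by Cauchy--Schwarz, or more simply the one-line optimality property $\mathbb{V}[\h_{\Delta x_l,\Delta t_l}-\lambda_{l-1}\h_{\Delta x_{l-1},\Delta t_{l-1}}]\le\mathbb{V}[Y_l]$ --- proves
\be
(1-r_{l-1}^2)^{\fl 1 2}\noab{\mathbb{V}[\h_{\Delta x_l,\Delta t_l}]}_{L^1(D)}^{\fl 1 2}\le C_w\left((\Delta x_l)^2+(\Delta x_{l-1})^2\right),
\ee
so the $l\ge2$ contribution becomes $C_w\abs{D}^{\fl 1 2}\hat{\lambda}_l M_l^{-\fl 1 2}\left((\Delta x_l)^2+(\Delta x_{l-1})^2\right)$ \emph{without} the factor $(1-r_{l-1}^2)^{\fl 1 2}$. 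Retaining that factor \emph{and} the $(\Delta x_l)^2$ decay simultaneously, as the printed bound does, would require $\noab{\mathbb{V}[\h_{\Delta x_l,\Delta t_l}]}_{L^1(D)}^{\fl 1 2}\le C_w(\Delta x_l)^2$, which is false --- this quantity tends to $\noab{\mathbb{V}[\h]}_{L^1(D)}^{\fl 1 2}\neq 0$ under refinement, precisely the point you flag. No rearrangement repairs this: in the toy model $\h_{\Delta x_l}=\h+(\Delta x_l)^2\eta_l$, with $\eta_l$ bounded independent unit-variance noise (consistent with \cref{ass:conv} and \cref{cor: uppbound}), one computes $(1-r_{l-1}^2)^{\fl 1 2}\mathbb{V}[\h_{\Delta x_l,\Delta t_l}]^{\fl 1 2}\approx(\Delta x_{l-1})^2$, whereas the claimed bound is of order $(\Delta x_{l-1})^2(\Delta x_l)^2$, smaller by an unbounded factor. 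Two further remarks: (i) your intermediate claim $\abs{1-\lambda_{l-1}}=O((\Delta x)^2)$ needs $\mathbb{V}[\h_{\Delta x_{l-1},\Delta t_{l-1}}]$ bounded away from zero, which can fail pointwise in $\vx$; keep the product $\abs{1-\lambda_{l-1}}\,\mathbb{V}[\h_{\Delta x_{l-1},\Delta t_{l-1}}]^{\fl 1 2}\le\mathbb{V}[Y_l]^{\fl 1 2}$ intact and no lower bound is needed; (ii) your completed argument does prove the asserted $L^2(\Omega;L^1(D))$ convergence, with the $l\ge2$ terms in the corrected form above, but as a proof of the literal displayed inequality the proposal is incomplete, and indeed no proof along these lines can deliver it.
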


\begin{remark}
We emphasize that the computational cost for quasi-optimal and optimal MLMC is the same as the standard MLMC method. In fact, we can use the data from MLMC to estimate the $\lambda_l$  using \cref{eq:quasi} or \cref{eqn:forwardsub}. 
\end{remark}


\section{Numerical results}
\label{sec:numericalresults}

In this section, we present several numerical examples for the BGK equation (\ref{eqn:bgkwithrandomness}) with random initial condition or random boundary condition. The details of the deterministic solver are provided in \cref{appendix}. Simply speaking, we are solving a reduced system \cref{eqn:1dbgkchuphi} and \cref{eqn:1dbgkchupsi}, which is equivalent to the full BGK equation in one spatial dimension. We use the IMEX-RK scheme for time discretization and finite volume scheme for spatial discretization so that the overall method is second order in both time and space. We choose $x\in[0,1]$ and $v\in[-5,5]$, where $40$ Legendre-Gauss quadrature points are used in the velocity space to ensure that the error in velocity is negligible. The CFL condition is fixed as $\Delta t=0.1\Delta x$.

\subsection{Error evaluation}

In the following, we assume the uncertainties come from either the initial condition or boundary condition. Since the solution is a random field, the numerical error is a random quantity as well. For error analysis, we therefore compute a statistical estimator by averaging numerical errors from several independent experiments.

More precisely, for each method we perform $K=40$ experiments, and get the corresponding approximations $\{q^{(j)}(x,t)\}$, $j=1,\ldots,K$, where $q$ can be $\rho$, $U$ or $T$. We approximate the overall error in norm $\noab{\cdot}_{L^2(\Omega;L^1(D))}$ via
\be
\label{def:overallerror}
E(t)=\sqrt{\fl 1 K \sum\limits_{j=1}^K\noab{q^{(j)}(\cdot,t)-q_{\text{ref}}(\cdot,t)}^2_{L^1(D)}},
\ee
where $q_{\text{ref}}(x,t)$ is the reference solution obtained using the stochastic collocation method \cite{Xiu} with $120$ Legendre-Gauss collocation points and $N_x=1280$ spatial points. We are also interested in the error at each spatial point:
\be
\label{def:spatialerror}
E_{\Delta x}(x,t)=\sqrt{\fl 1 K \sum\limits_{j=1}^K({q^{(j)}(x,t)-q_{\text{ref}}(x,t)})^2}.
\ee

Sometimes to better evaluate the error from the random domain, we would like to ignore the error induced by spatial discretization. To achieve so, we consider another kind of reference solution, $q_{\text{rel}}(x,t)$, obtained again using the stochastic collocation with $120$ collocation points, while in the spatial domain we use the same finest mesh $\Delta x_L$ as that in the corresponding MLMC method to obtain $q^{(j)}(x,t)$. Therefore, we can assess the error as
\be
\label{def:relativespatialerror}
E_{\text{rel}\Delta x}(x,t)=\sqrt{\fl 1 K \sum\limits_{j=1}^K({q^{(j)}(x,t)-q_{\text{rel}}(x,t)})^2}.
\ee

\subsection{Test 1: Smooth random initial condition}

We first consider the BGK equation subject to random initial condition:
\begin{equation}
\label{ini:noneqini}
f^0(\vx,\vv,z)=0.5M_{\rho,\mU,T}+0.5M_{\rho,-\mU,T},
\end{equation}
with
\be
M_{\rho,\mU, T}(\vx,\vv,z)=\fl {\rho(\vx,z)} {(2\pi T(\vx,z))^{\fl {3} 2}}\exp\left({-\fl {\ab{\vv-\mU(\vx,z)}^2}{2T(\vx,z)}}\right),
\ee
where
\be
\bal
\label{initialcondition}
&\rho(\vx,z)=\fl {2+\sin(2\pi x)+\fl 1 2 \sin(4\pi x)z} 3, \quad \mU(\vx)=(0.2,0,0), \\
&T(\vx,z)=\fl {3+\cos(2\pi x)+\fl 1 2 \cos(4\pi x)z} 4,
\eal
\ee
and the random variable $z$ obeys the uniform distribution on $[-1,1]$. The periodic boundary condition is used and the Knudsen number $\varepsilon=1$.

To determine the number of samples needed in MC and MLMC methods, we proceed as follows.

In the MC method, we consider a series of spatial discretizations: $N=10$, $20$, $30$, $40$, and for each case, we vary the sample size as $M=5$, $10$, $15$, ... The results are shown in  \cref{fig:mctest} (left), where we plot the error \cref{def:overallerror}. It can be observed that when the number of samples is few, the statistical error dominates and when there are enough number of samples, the spatial error dominates. Therefore, we can roughly determine the best number of samples so that the statistical error $O(M^{-\fl 12})$ balances with the spatial/temporal error $O(\Delta x^2)$:
\begin{itemize}
  \item $N=10$, $M$ $\approx 40$.
  \item $N=20$, $M$ $\approx 640$.
  \item $N=30$, $M$ $\approx 3300$.
  \item $N=40$, $M$ $\approx 10240$.
\end{itemize}

In the MLMC method, we consider three levels of spatial discretizations: $N_1=10$, $N_2=20$, $N_3=40$ and the corresponding number of samples at each level are chosen as $M_1$, $M_2=\fl {M_1}{4}$ and $M_3=\fl {M_1} {16}$. We then vary the starting sample size as $M_1=16$, $32$, $48$, ... The results are shown in \cref{fig:mctest} (right). Roughly we can see that $M_1 \approx 10240$ gives the smallest error (the error saturates when the sample size further increases).

\begin{figure}[tb]
\begin{center}
\includegraphics[width=2.3in]{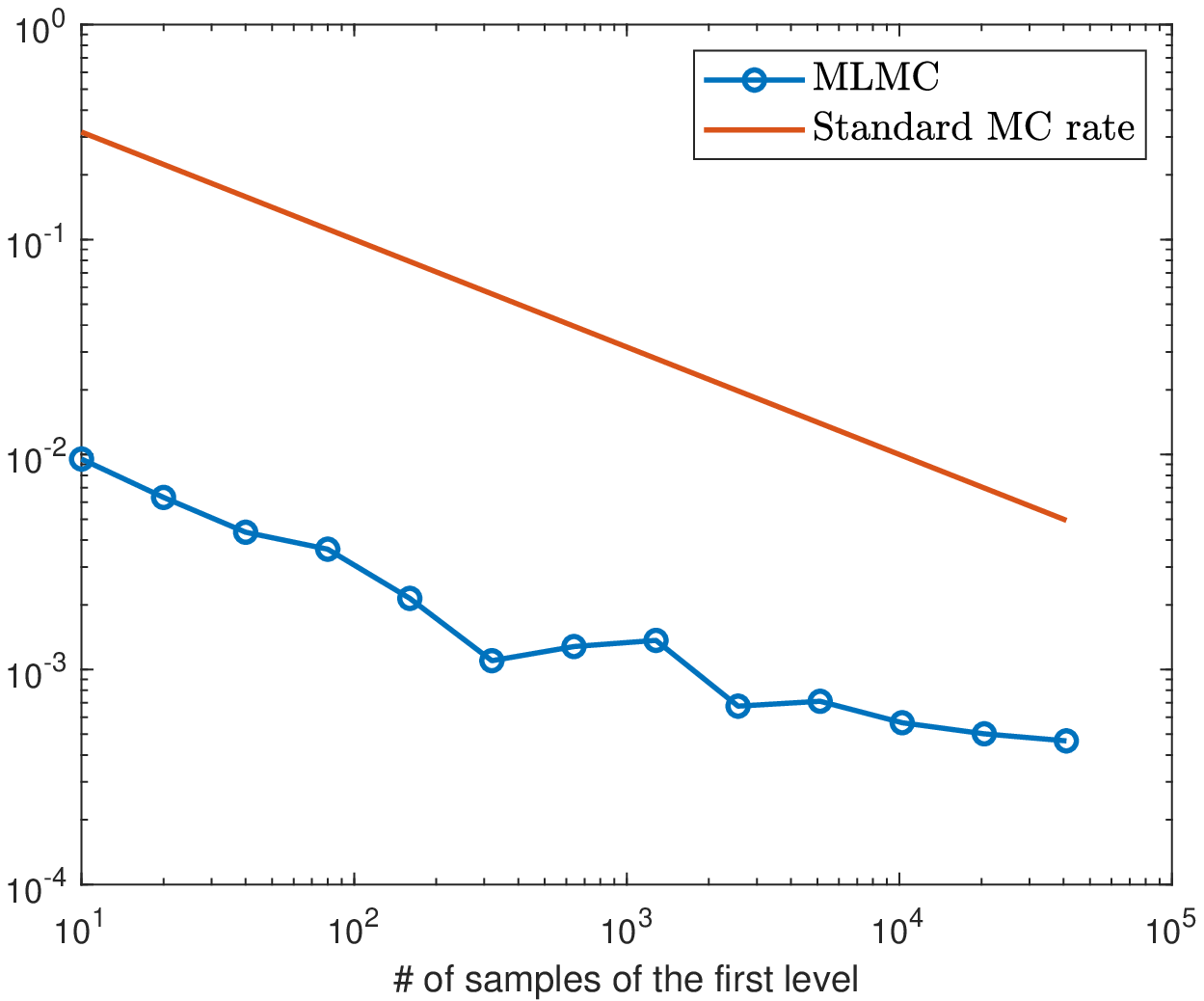}
\includegraphics[width=2.3in]{alltest}
\caption{Test 1: Error \cref{def:overallerror} (density $\rho$) of MC method (left) and MLMC method (right) v.s. number of samples (for MLMC, it is the number of samples at the first level).}
\label{fig:mctest}
\end{center}
\end{figure}

In \cref{fig:alltest} we combine all the previous MC and MLMC results under the scale of workload. Since we are essentially solving 1D BGK problem, the workload for one deterministic run up to certain time with $N$ spatial points is $O(N^2)$. Then for the MC method with $M$ samples, the total work is $O(MN^2)$. For the MLMC method, the amount of work is $O(M_1N_1^2+M_2N_2^2+M_3N_3^2=3M_1N_1^2)$. As we can see clearly from  \cref{fig:alltest}, with the same workload, the MLMC method can achieve better accuracy compared to various MC. 

\begin{figure}[tb]
\begin{center}
\includegraphics[width=2.58in]{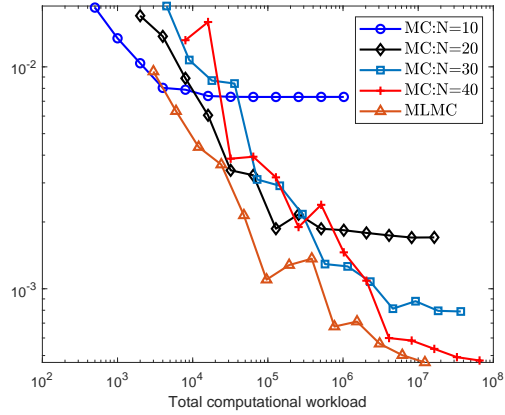}
\caption{Test 1: Error \cref{def:overallerror} (density $\rho$) of MC and MLMC methods v.s. computational workload.}
\label{fig:alltest}
\end{center}
\end{figure}

Now we fix the mesh sizes $N_1=10$, $N_2=20$, $N_3=40$, and sample sizes $M_1=10240$, $M_2=2560$, $M_3=640$ in the MLMC method. We then find the number of samples in the MC method such that they have the same workload. This means
\begin{itemize}
  \item $N=10$, $M=30720$.
  \item $N=20$, $M=7680$.
  \item $N=30$, $M=3413$.
  \item $N=40$, $M=1920$.
\end{itemize}
Note that comparing with the numbers we found earlier, for $N=10$ and $20$, the number of samples are far beyond the best number of samples, while for $N=30$, $M$ is around the best number of samples. Finally for $N=40$, the number of samples here is not enough to achieve the best accuracy in the MC method. Using the above parameters, we compare the errors of the standard MC method and three MLMC methods, namely, the standard MLMC, the quasi-optimal MLMC, and optimal MLMC. The results are shown in \cref{fig:timeevopb}, from which we clearly see the better accuracy of MLMC methods compared to standard MC. On the other hand, the difference of three MLMC methods are not obvious in this example.

\begin{figure}[tb]
\begin{center}
\includegraphics[width=2.58in]{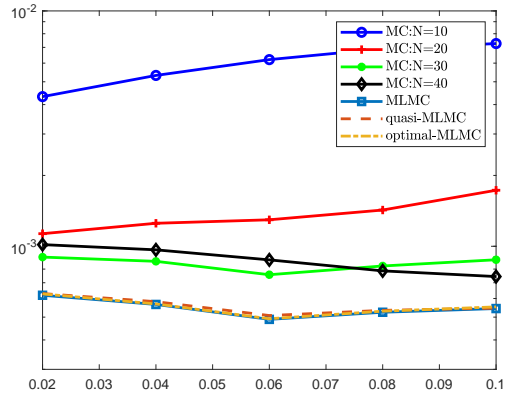}
\caption{Test 1: Time evolution of the errors \cref{def:overallerror} (density $\rho$) using MC and various MLMC methods.}
\label{fig:timeevopb}
\end{center}
\end{figure}

Next we examine the errors of the three MLMC methods as defined in \cref{def:spatialerror},\cref{def:relativespatialerror}. The results are gathered in \cref{fig:3mlmc}. We can see that the three methods perform equally well in this test (at some points of the domain, the errors of quasi-optimal MLMC and optimal MLMC are slightly better than standard MLMC), largely because the solution is smooth. 

\begin{figure}[!tb]
\begin{center}
\includegraphics[width=1.89in]{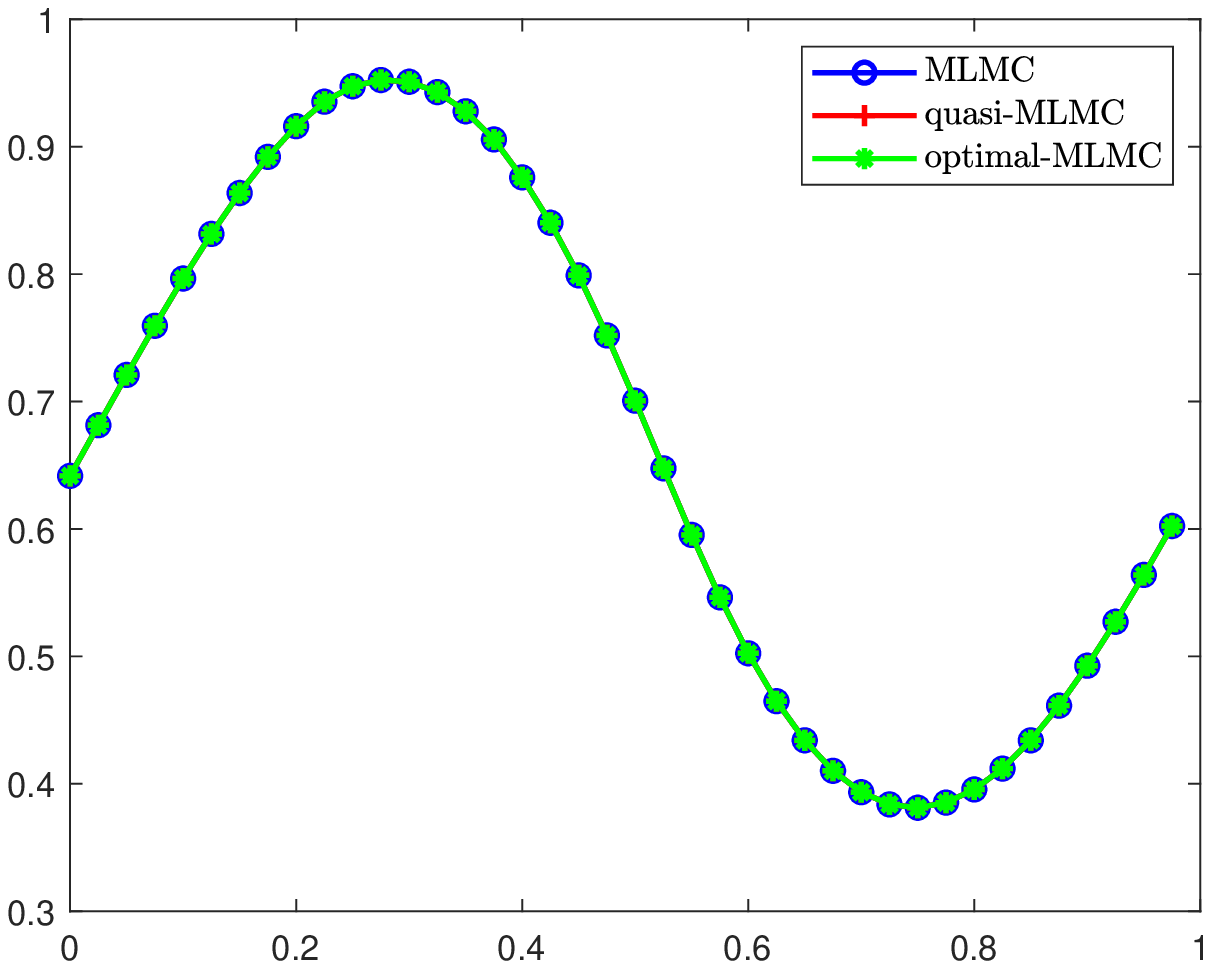}
\includegraphics[width=1.89in]{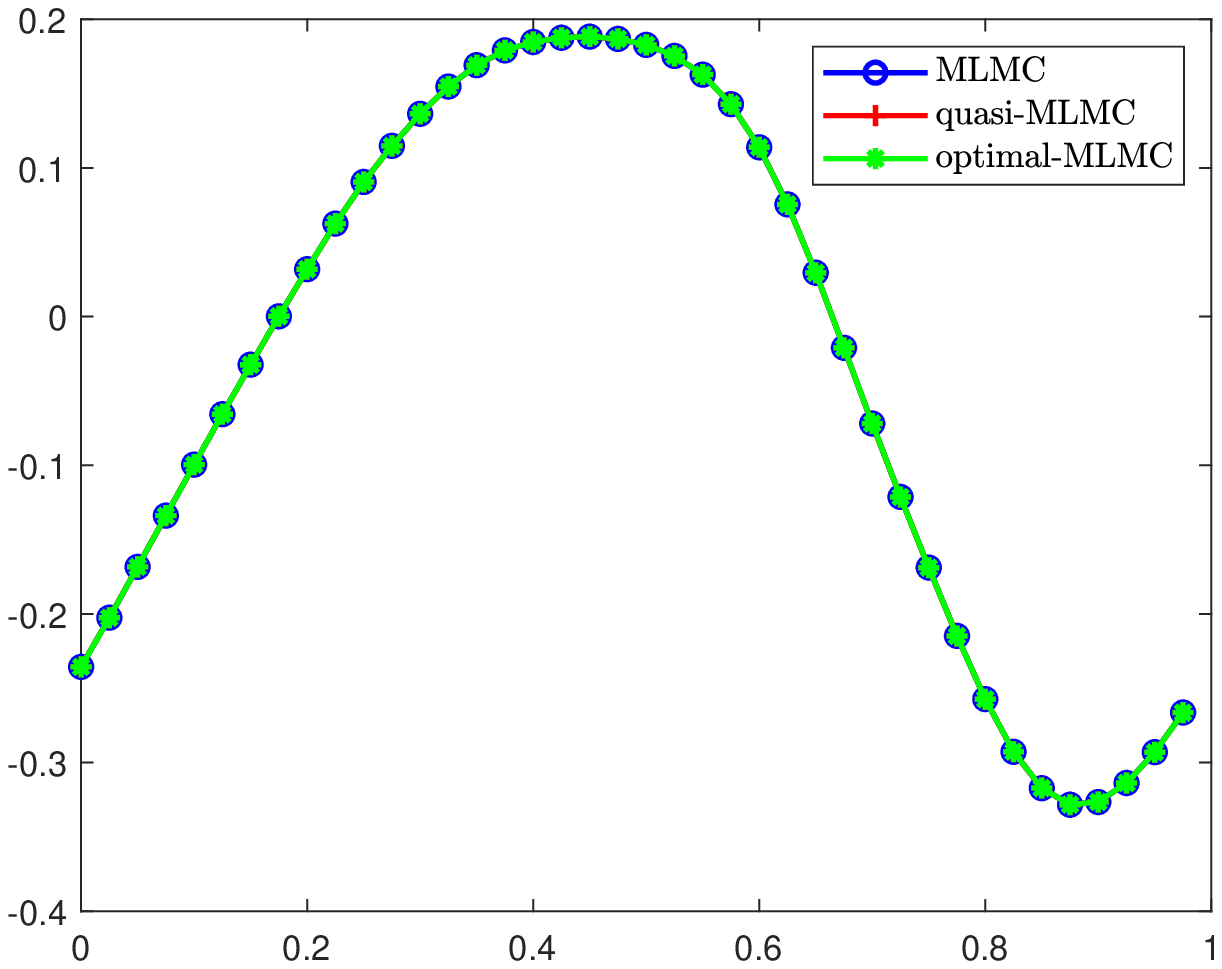}
\includegraphics[width=1.89in]{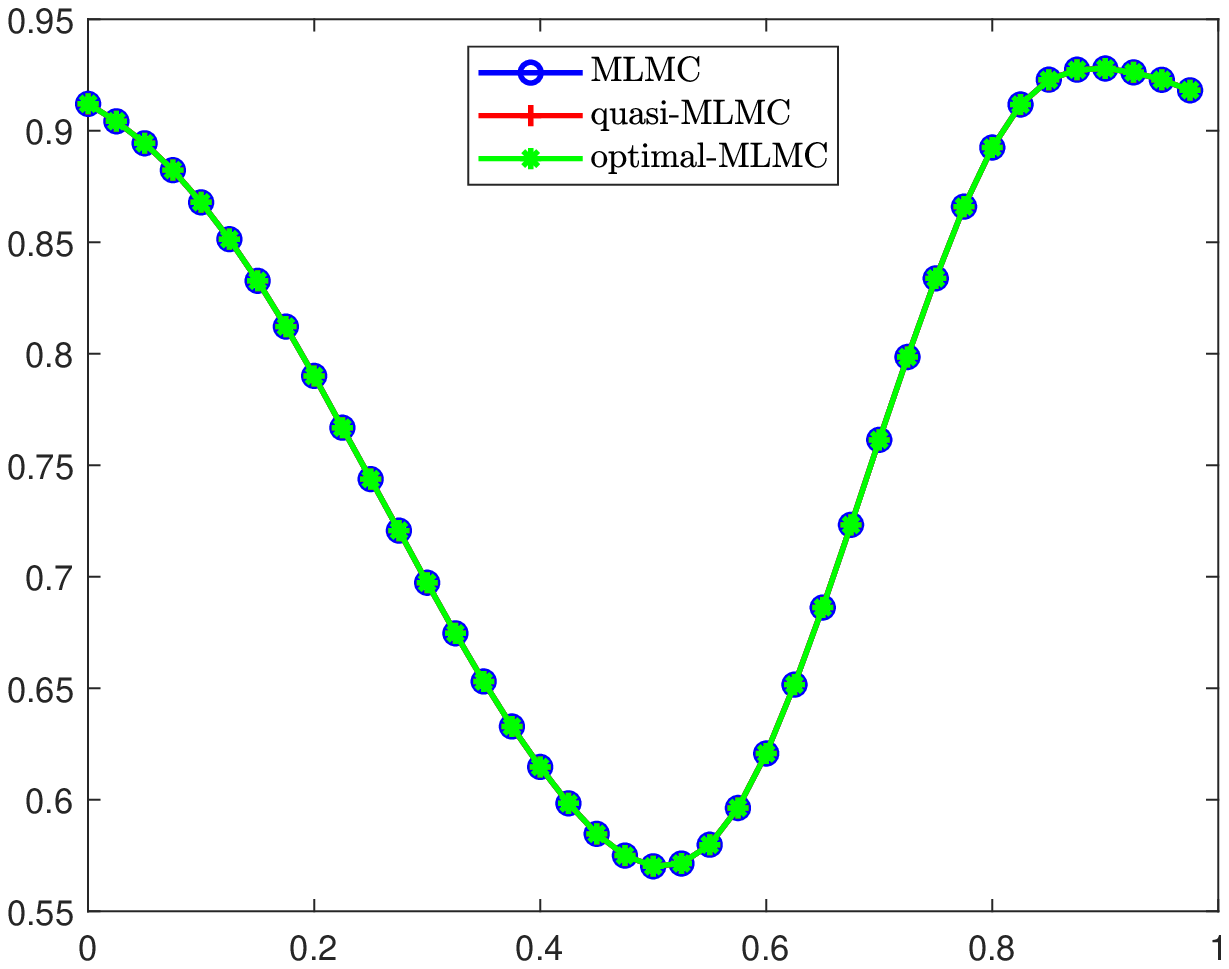}

\includegraphics[width=1.89in]{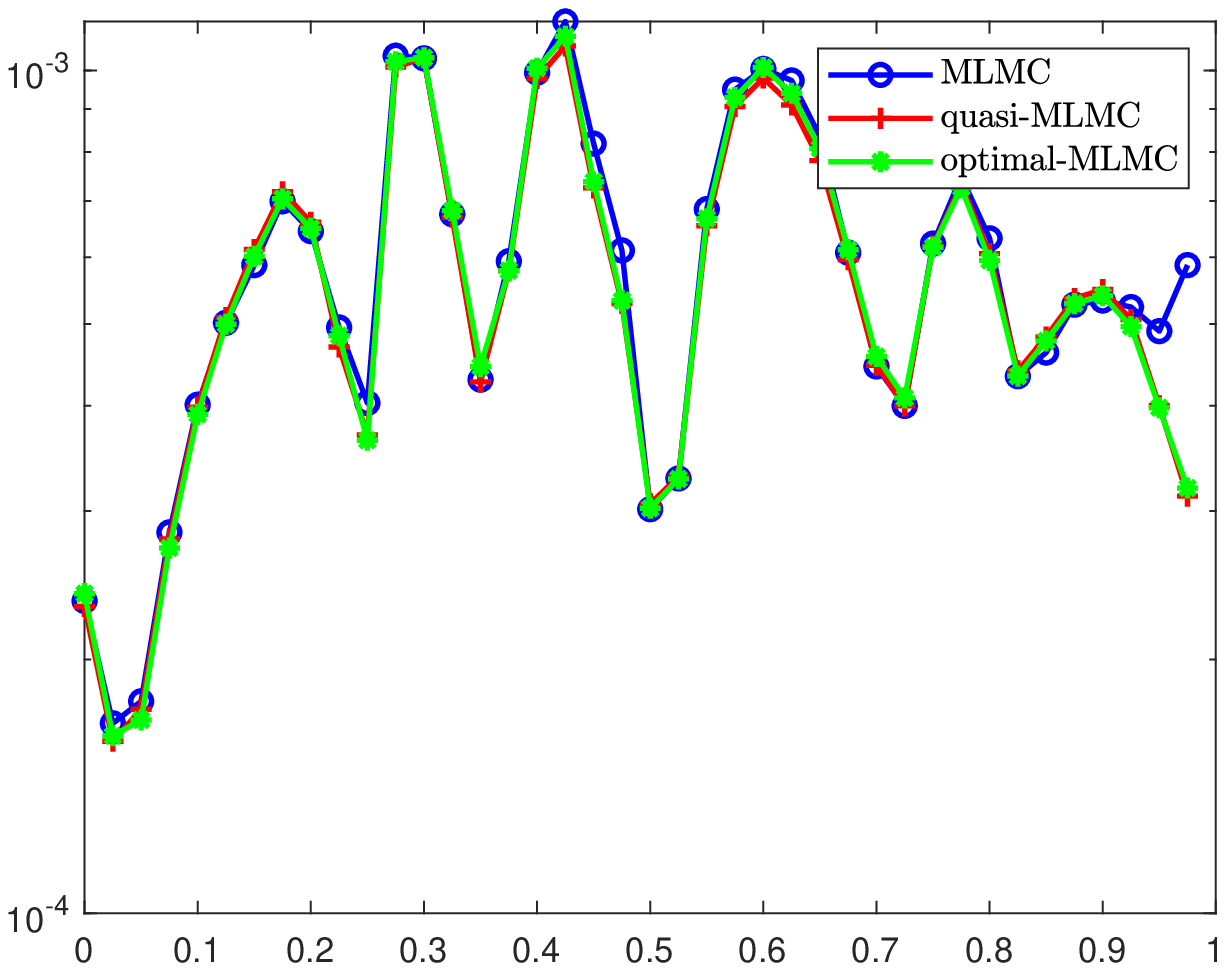}
\includegraphics[width=1.89in]{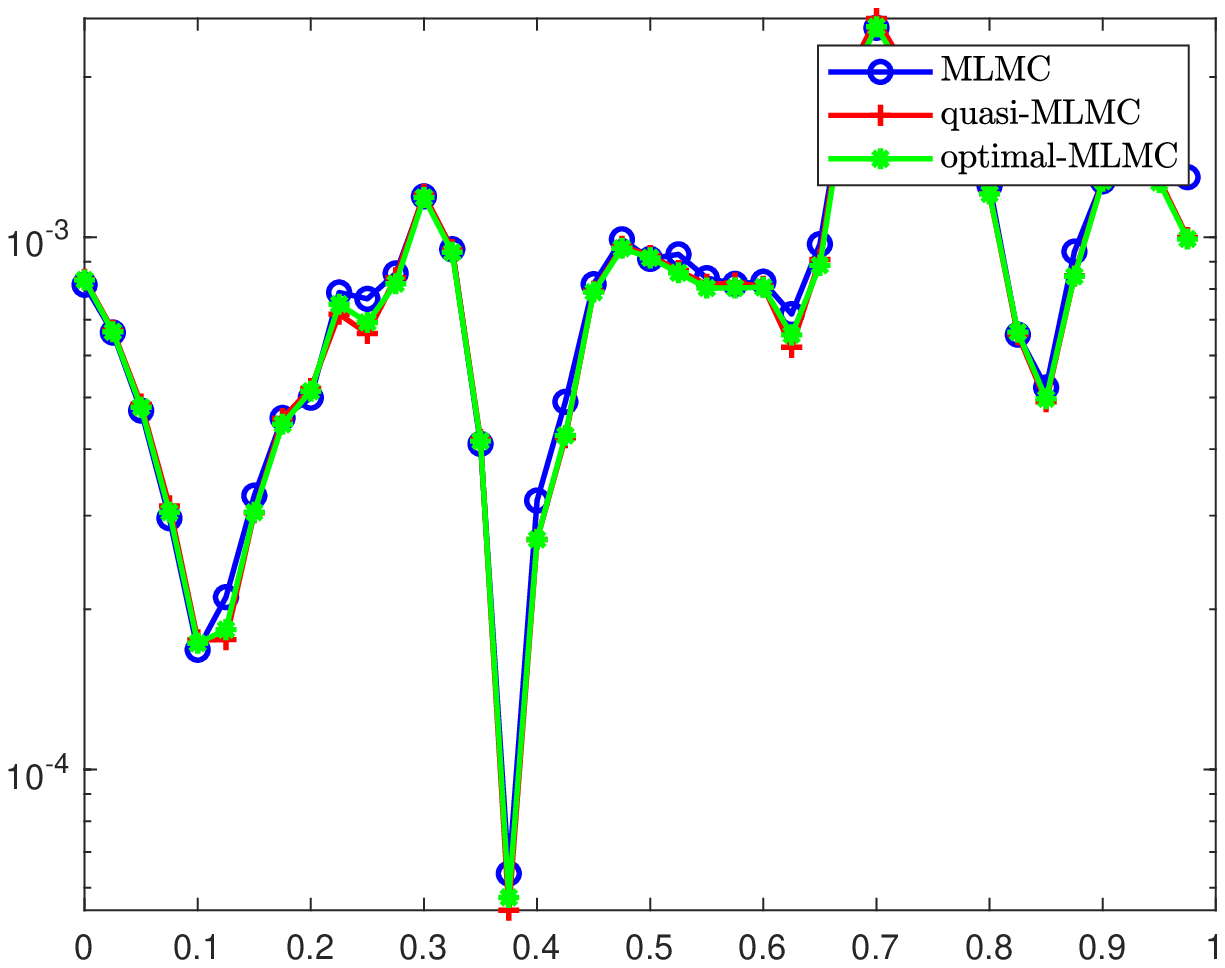}
\includegraphics[width=1.89in]{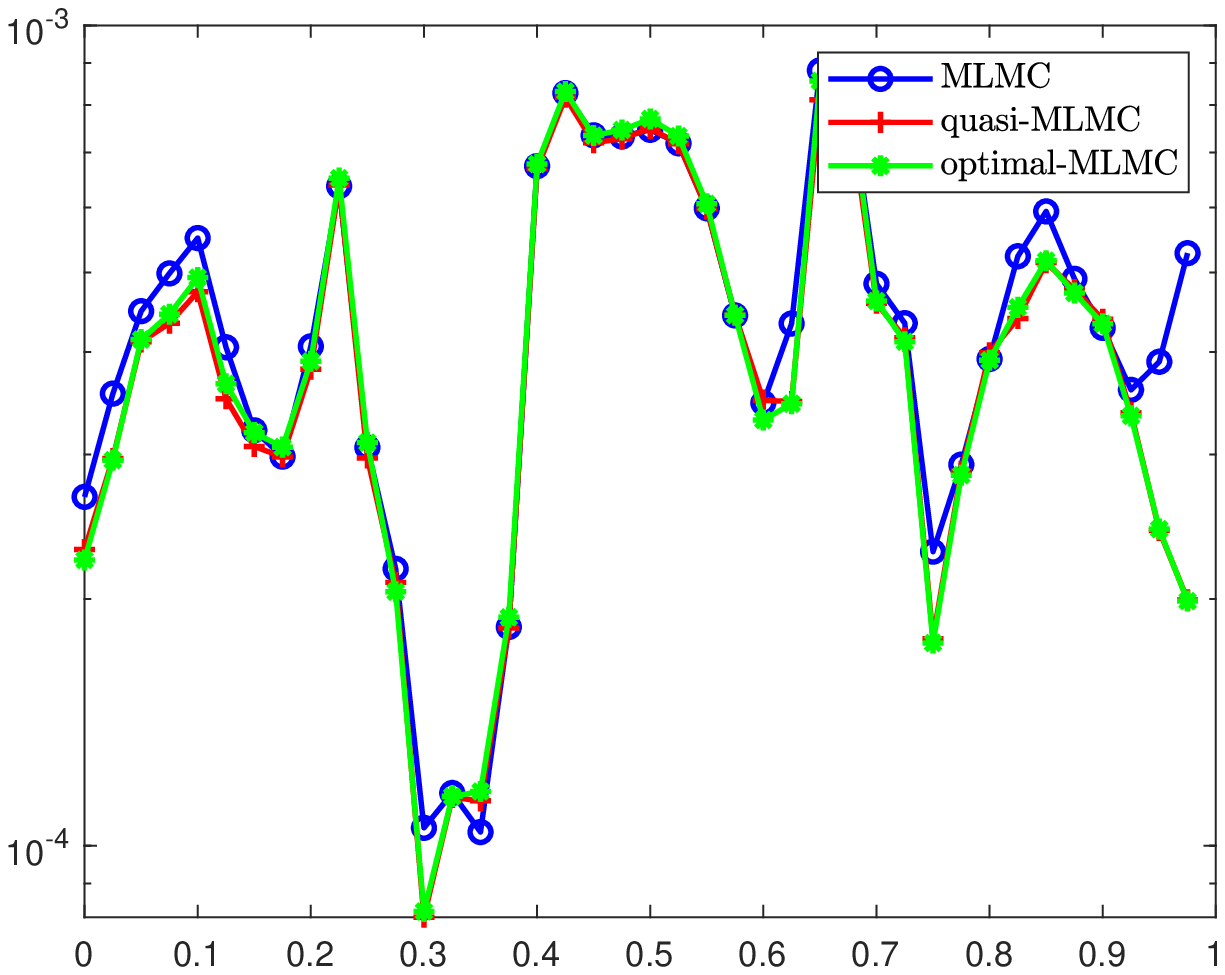}
\includegraphics[width=1.89in]{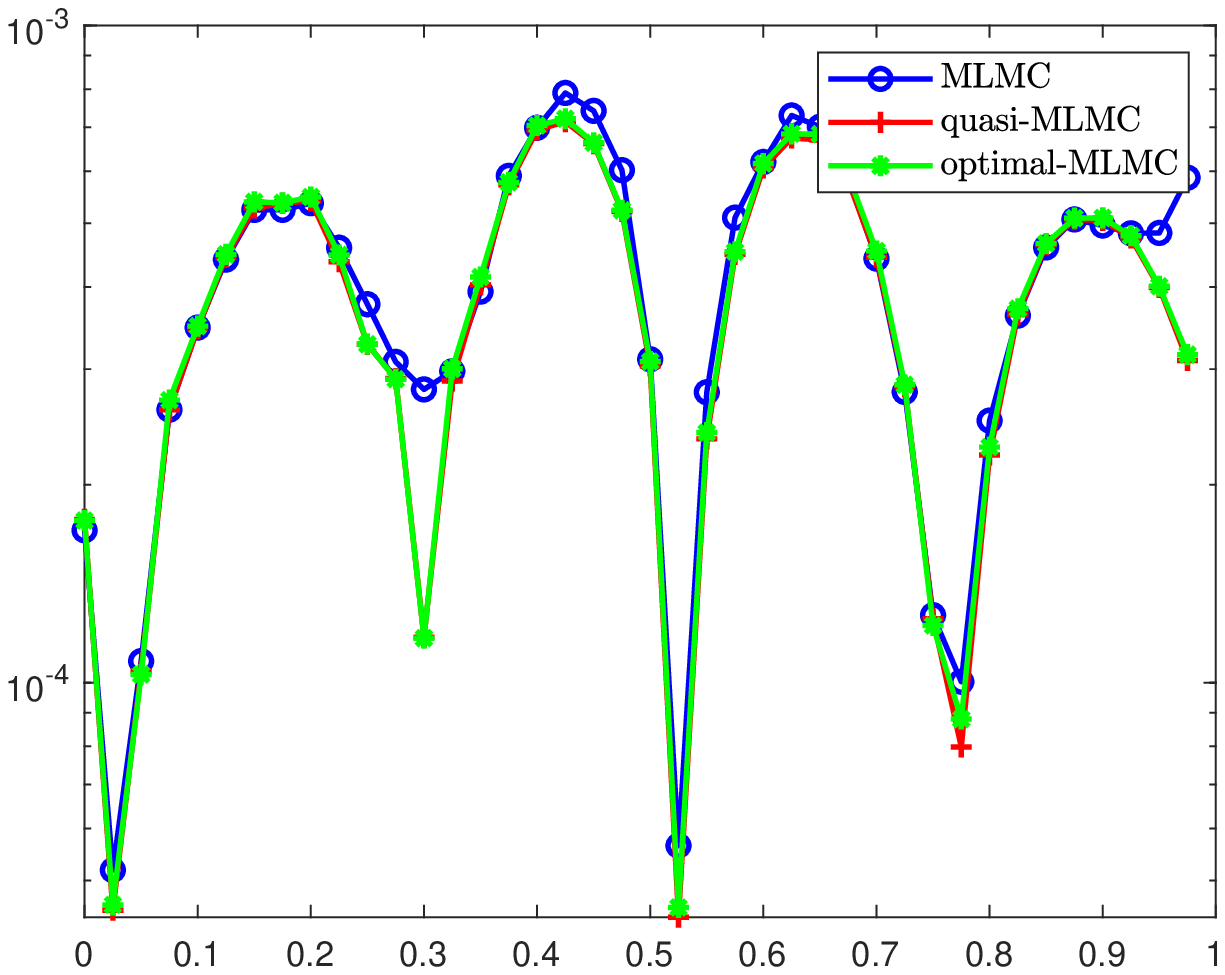}
\includegraphics[width=1.89in]{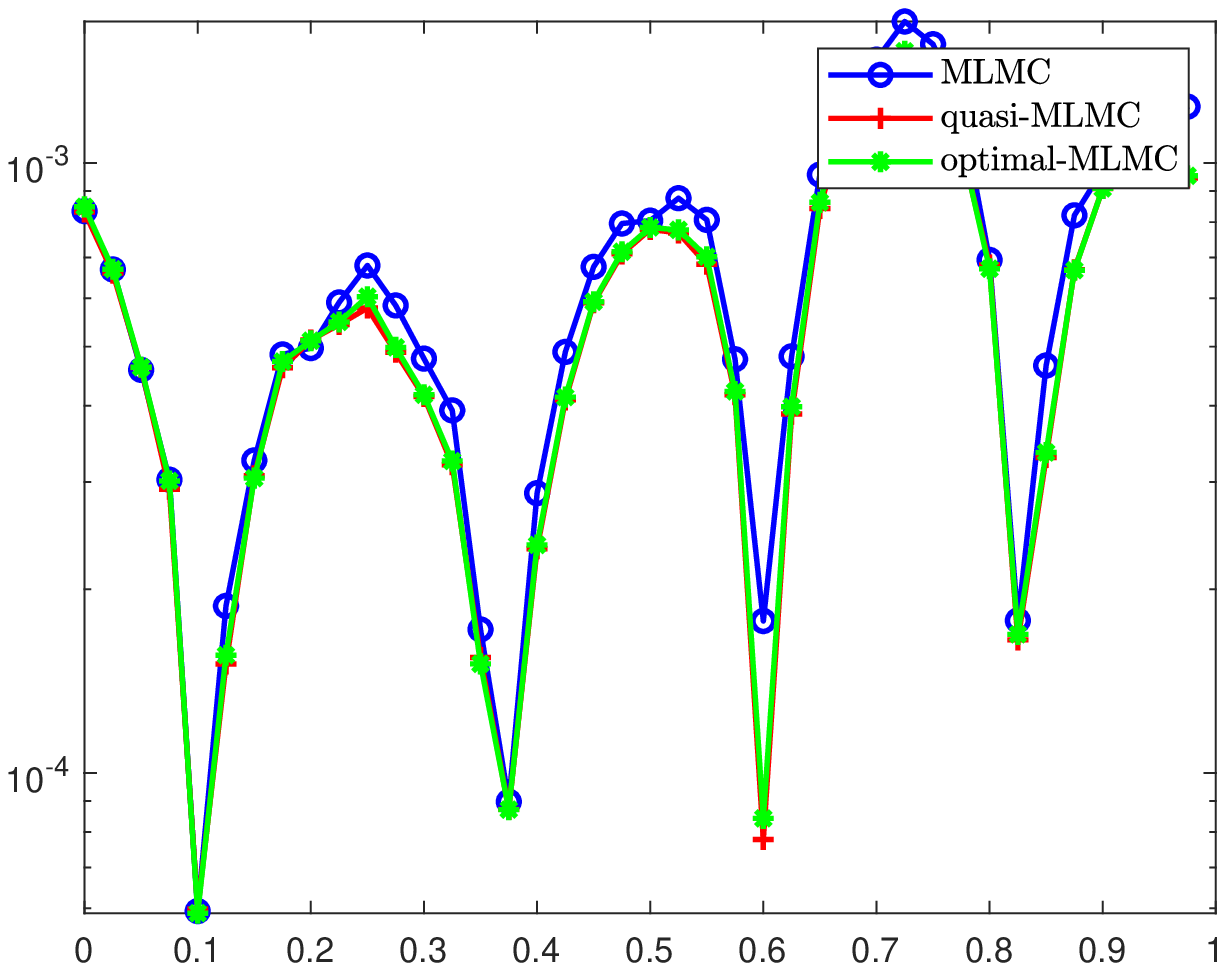}
\includegraphics[width=1.89in]{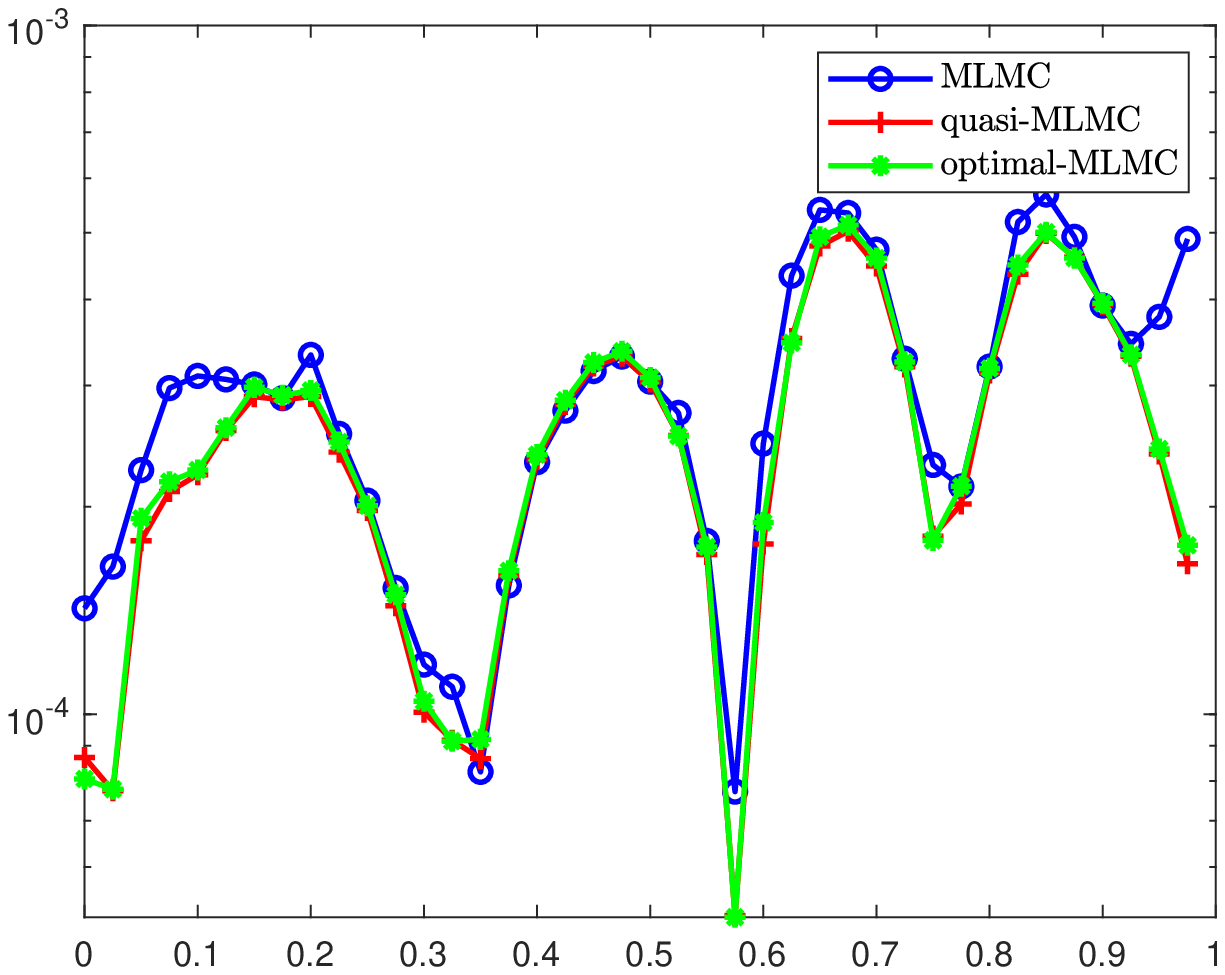}

\caption{Test 1: Approximated expectation of density $\mathbb{E}[\rho]$ (left), velocity $\mathbb{E}[U]$ (middle) and temperature $\mathbb{E}[T]$ (right) using MLMC, quasi-optimal MLMC and optimal MLMC methods at time $t=0.1$ (top row). Error \cref{def:spatialerror} of expectation of density (left), velocity (middle) and temperature (right) using three MLMC methods (middle row). Relative error \cref{def:relativespatialerror} of expectation of density (left), velocity (middle) and temperature (right) using three MLMC methods (bottom row).}
\label{fig:3mlmc}
\end{center}
\end{figure}

To better understand this, we plot the values of $\lambda_1 $ and $\lambda_2$ in the quasi-optimal and optimal MLMC methods in \cref{fig:pblambda}. We can see that almost all values are not far from $1$, which means the methods are not far from the standard MLMC. 

\begin{figure}[tb]
\begin{center}
\includegraphics[width=2.2in]{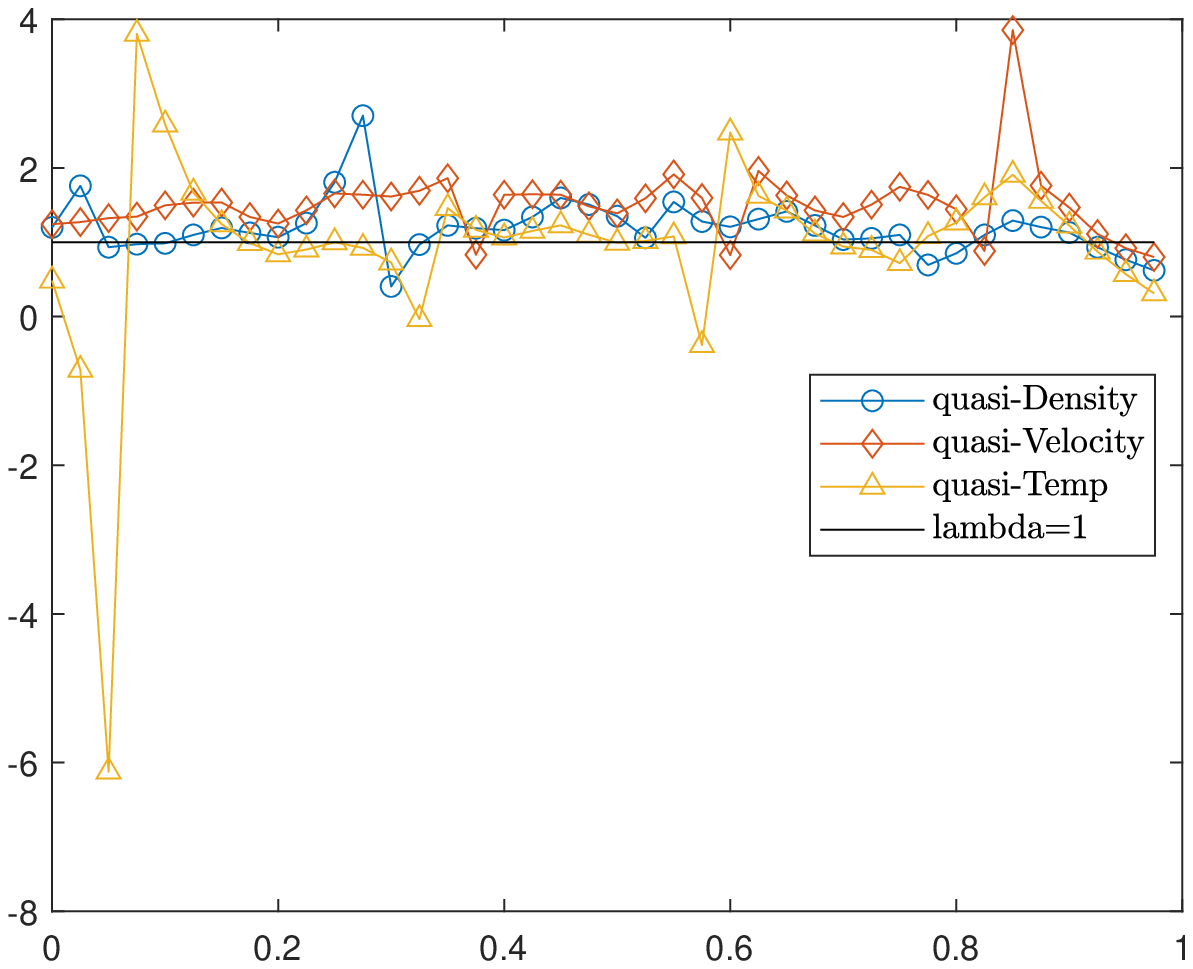}
\includegraphics[width=2.2in]{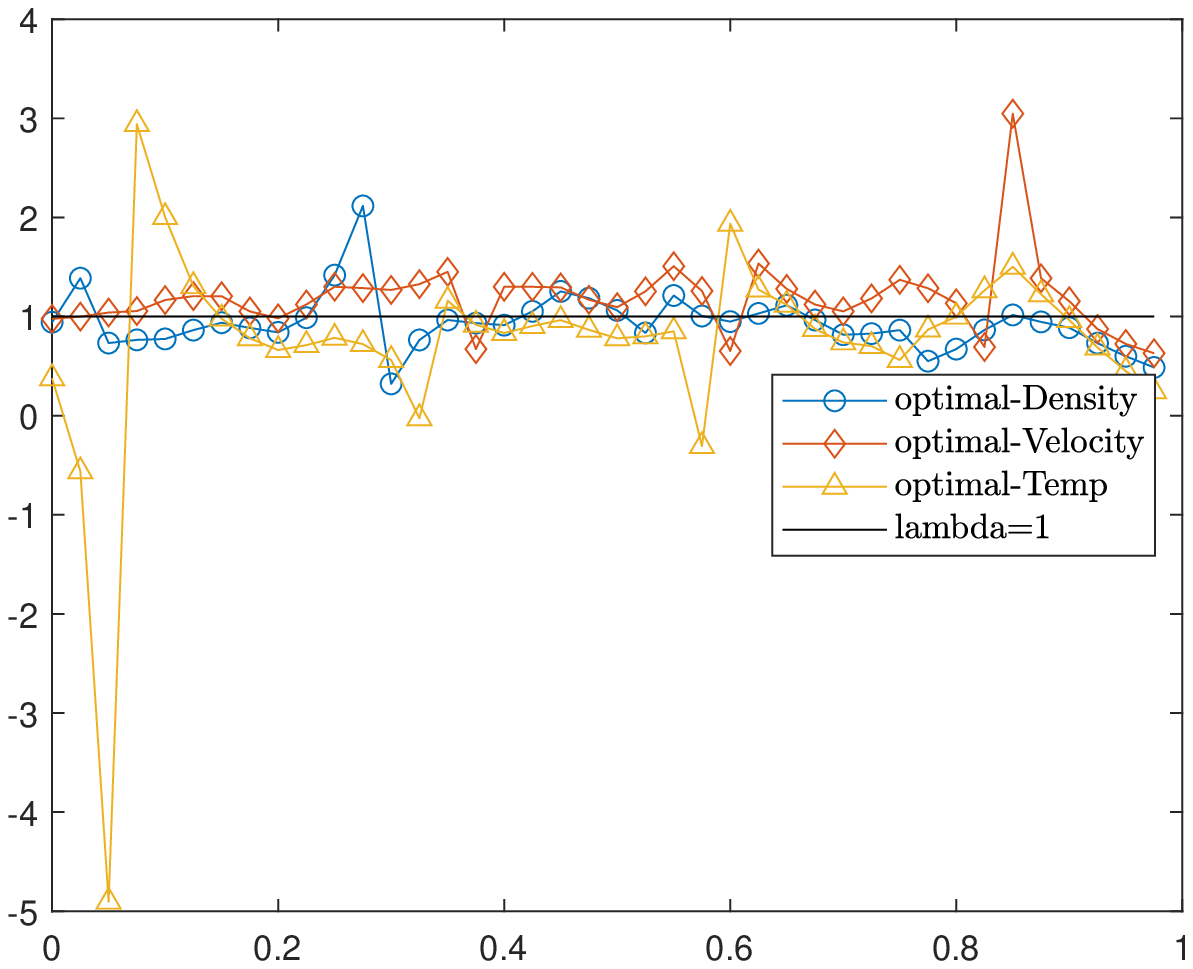}
\includegraphics[width=2.2in]{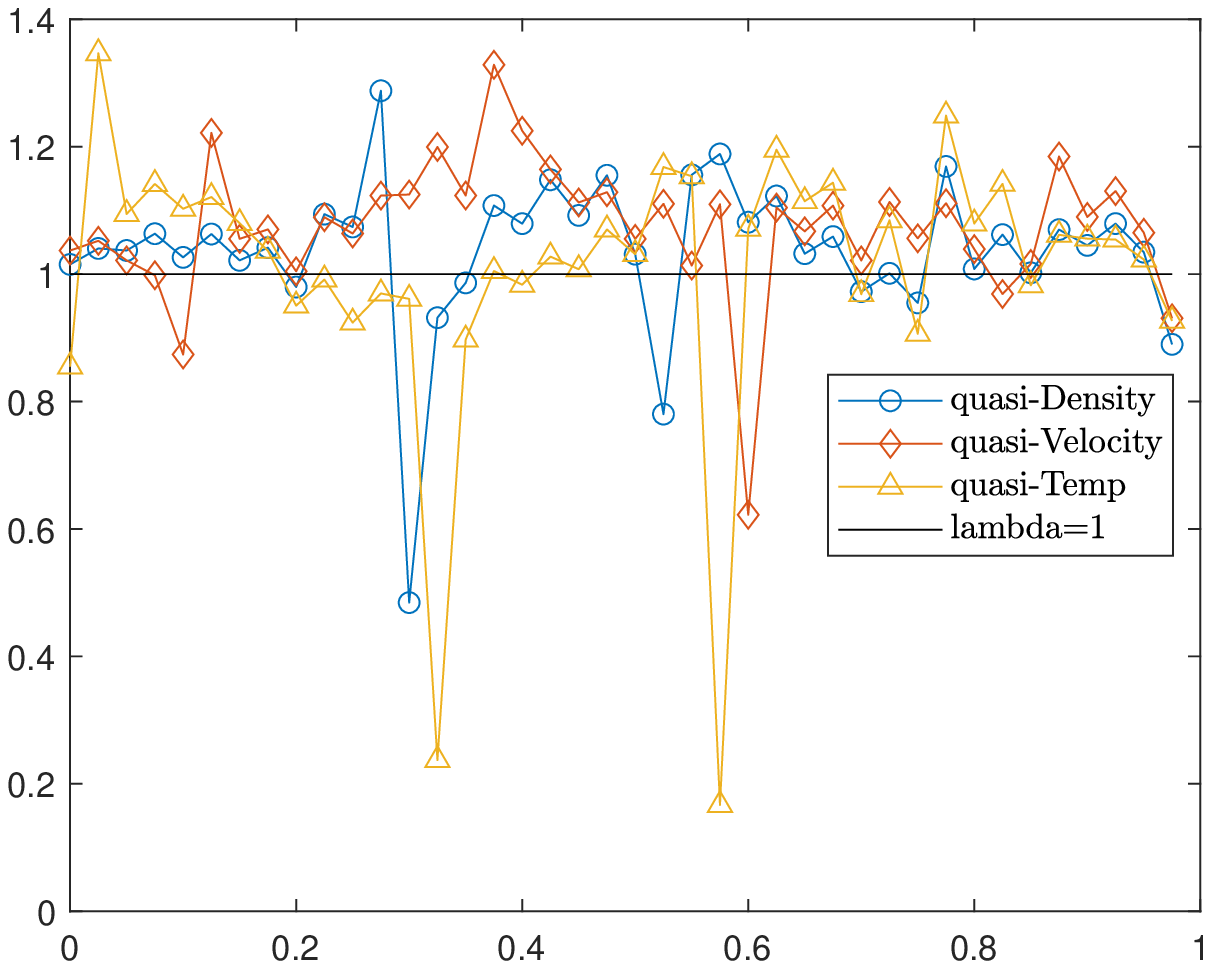}
\includegraphics[width=2.2in]{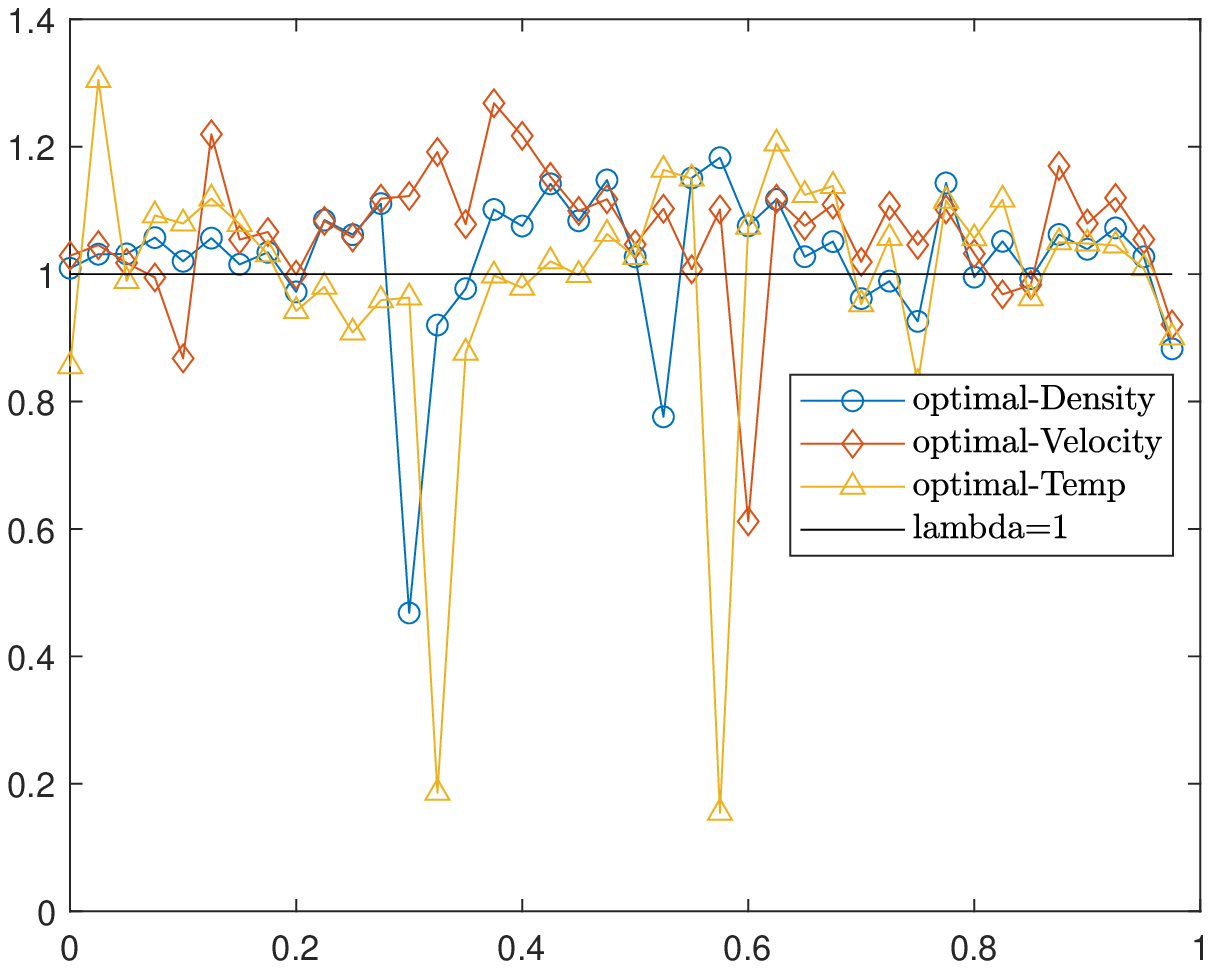}
\caption{Test 1: Values of $\lambda_1$ in quasi-optimal (left) and optimal (right) MLMC methods (top row). Values of $\lambda_2$ in quasi-optimal (left) and optimal (right) MLMC methods (bottom row).}
\label{fig:pblambda}
\end{center}
\end{figure}

\subsection{Test 2: Shock tube problem}

In this test, we consider two kinds of shock tube problems with random initial condition. The first one has uncertainty in the interface location:
\be
I:\left\{
\bal
&\rho_l=1, &\mU_l=(0,0,0)&,\ \ \   T_l=1,& &f_0=M_{\rho_l,\mU_l,T_l}&\quad &x \leq 0.5+0.05z,\\
&\rho_r=0.125, &\mU_r=(0,0,0)&,\ \ \   T_r=0.25,& &f_0=M_{\rho_r,\mU_r,T_r}&\quad &x > 0.5+0.05z.
\eal
\right.
\label{ini:shocktubeini1}
\ee
The second one has uncertainty in the state variables:
\be
II:\left\{
\bal
&\rho_l=1+0.1(z+1), &\mU_l=(0,0,0)&,\ \ \   T_l=1,& &f_0=M_{\rho_l,\mU_l,T_l}&\quad &x \leq 0.5,\\
&\rho_r=0.125, &\mU_r=(0,0,0)&,\ \ \   T_r=0.25,& &f_0=M_{\rho_r,\mU_r,T_r}&\quad &x > 0.5.
\eal
\right.
\label{ini:shocktubeini2}
\ee
The random variable $z$ obeys the uniform distribution on $[-1,1]$. We set the Knudsen number $\varepsilon=1.0e-6$ so that the problem is close to the fluid regime.

Similarly as the previous example, we need to perform a series of tests to determine the number of samples needed in MC and MLMC methods. \cref{fig:mcsttest} shows the analogous tests as those in \cref{fig:mctest}. The main difference from the previous example is that the errors saturate much quicker as the number of samples increases. This is due to the low regularity of the solution so that the error from spatial/temporal discretization dominants easily. In \cref{fig:allsttest} we combine both MC and MLMC results under the scale of workload. Similarly as we observed in \cref{fig:alltest}, with the same workload, the MLMC method can achieve the best accuracy.

\begin{figure}[tb]
\begin{center}
\includegraphics[width=2.3in]{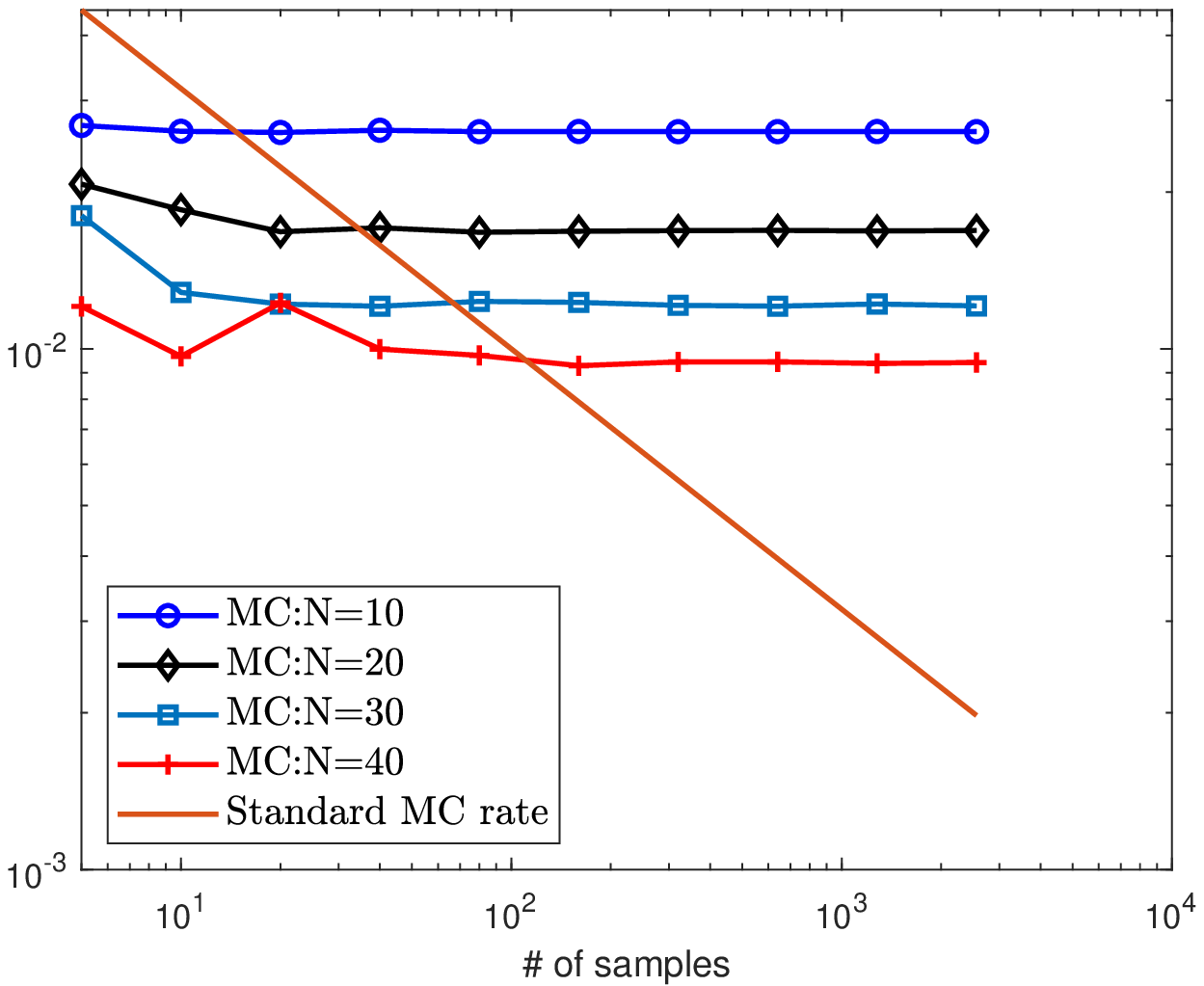}
\includegraphics[width=2.3in]{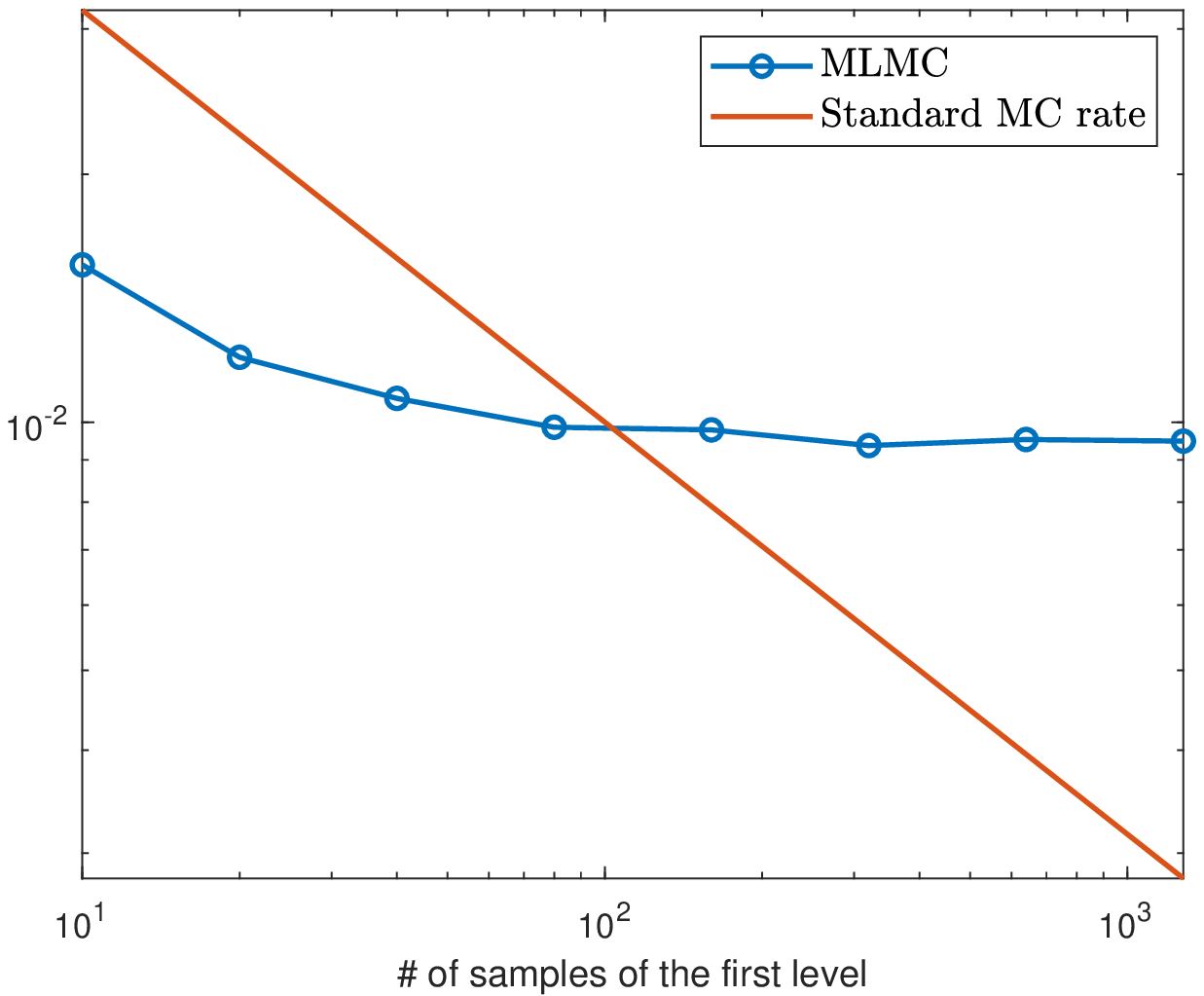}
\caption{Test 2 (I): Error \cref{def:overallerror} (density $\rho$) of MC method (left) and MLMC method (right) v.s. number of samples.}
\label{fig:mcsttest}
\end{center}
\end{figure}

\begin{figure}[!tb]
\begin{center}
\includegraphics[width=2.58in]{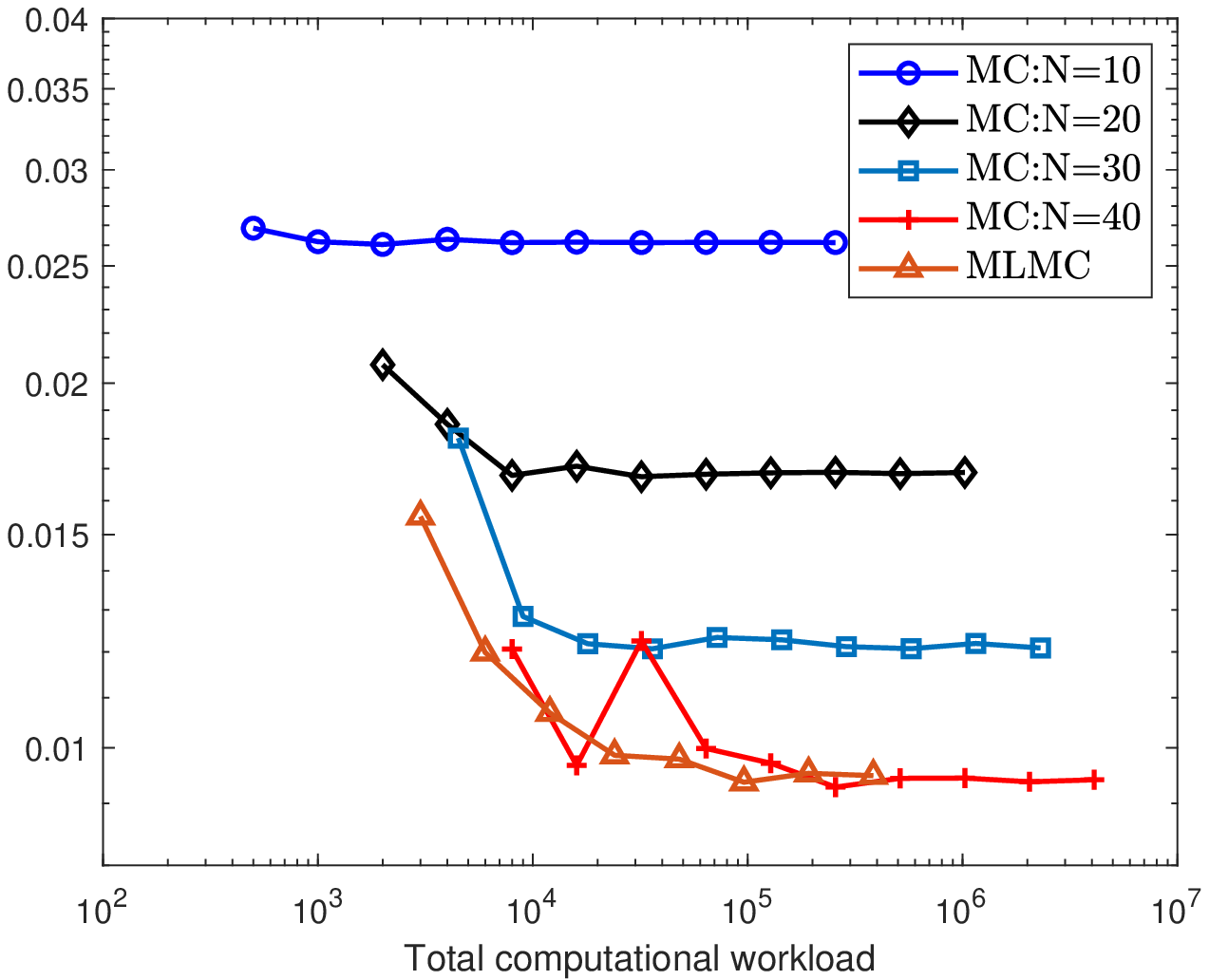}
\caption{Test 2 (I): Error \cref{def:overallerror} (density $\rho$) of MC and MLMC methods v.s. computational workload.}
\label{fig:allsttest}
\end{center}
\end{figure}

From the right plot in \cref{fig:mcsttest}, we also see that $M_1\approx 320$ is the best number of samples at level 1 of MLMC method. Therefore, we fix the set of parameters in the following tests of MLMC methods: mesh sizes  $N_1=10$, $N_2=20$, $N_3=40$, and sample sizes $M_1=320$, $M_2=80$, $M_3=20$. In Figures \ref{fig:st1exp}-\ref{fig:st2var}, we report the results obtained using standard MLMC, quasi-optimal MLMC, and optimal MLMC methods. We examine the approximations to both expectation $\mathbb{E}[q]$ and variance $\mathbb{V}[q]$ of the macroscopic quantities $\rho$, $U$ and $T$. Note that the MLMC methods are based on the linearity of the expectation operator, not the variance operator. Hence to approximate the variance, we approximate separately two different expectations $\mathbb{E}[q^2]$ and $\mathbb{E}[q]$ and use them to obtain $\mathbb{V}[q]=\mathbb{E}[q^2]-(\mathbb{E}[q])^2$. The results clearly show that both control variate MLMC methods outperform the standard MLMC in regions where the solution presents strong variations, namely close to the shock position. On the other hand, we did not observe any relevant gain using the optimal MLMC method over the quasi-optimal MLMC.

\begin{figure}[tb]
\begin{center}
\includegraphics[width=1.89in]{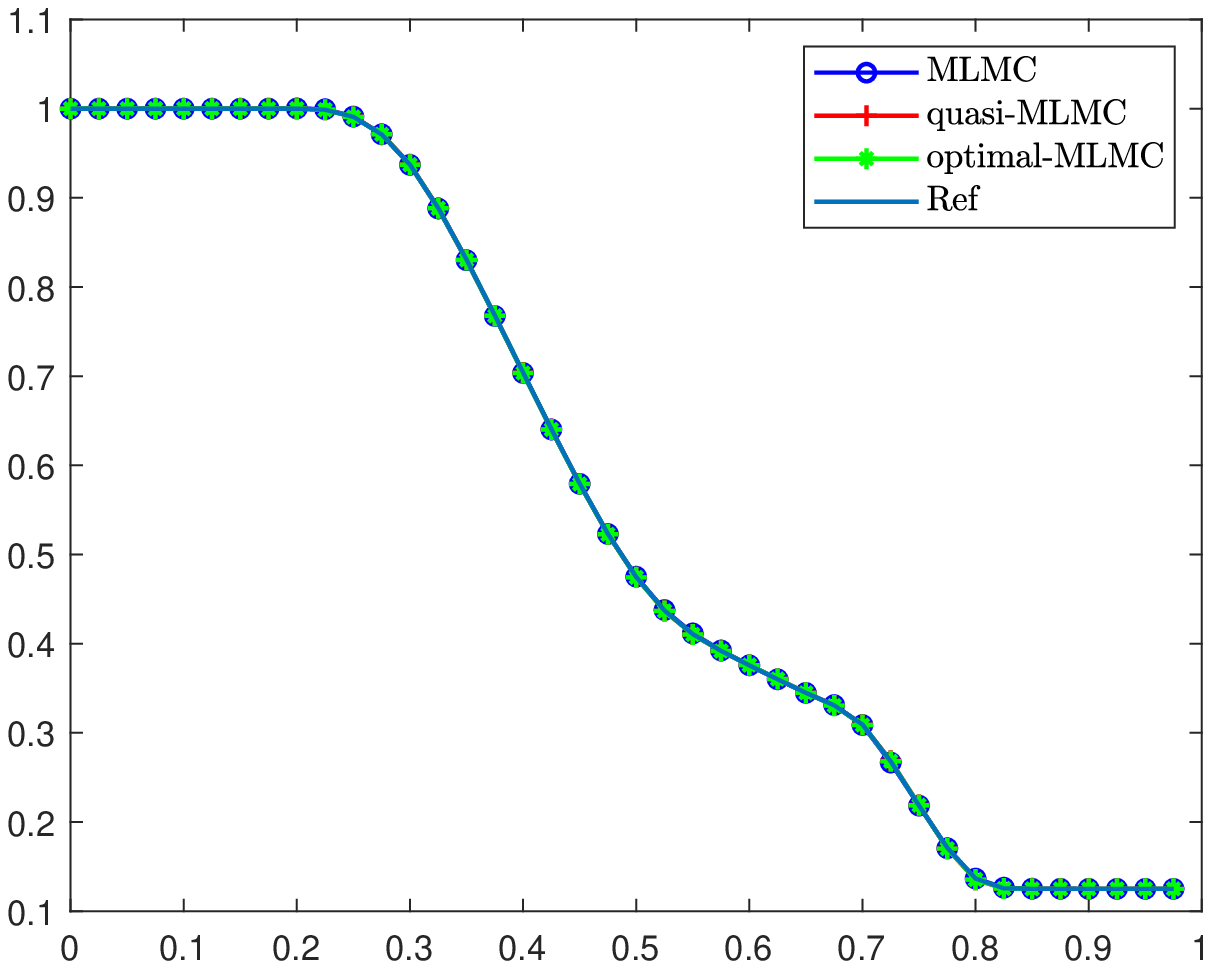}
\includegraphics[width=1.89in]{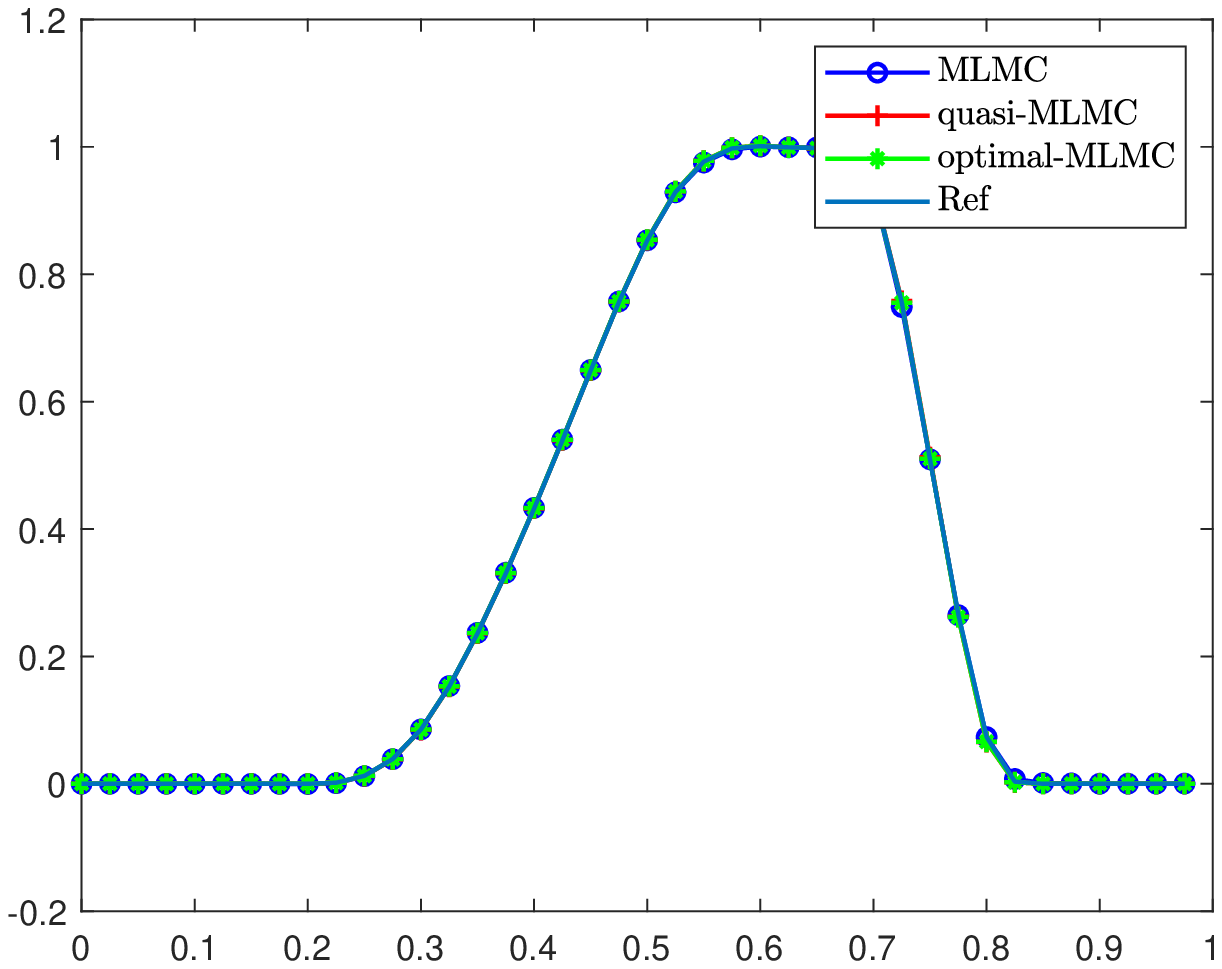}
\includegraphics[width=1.89in]{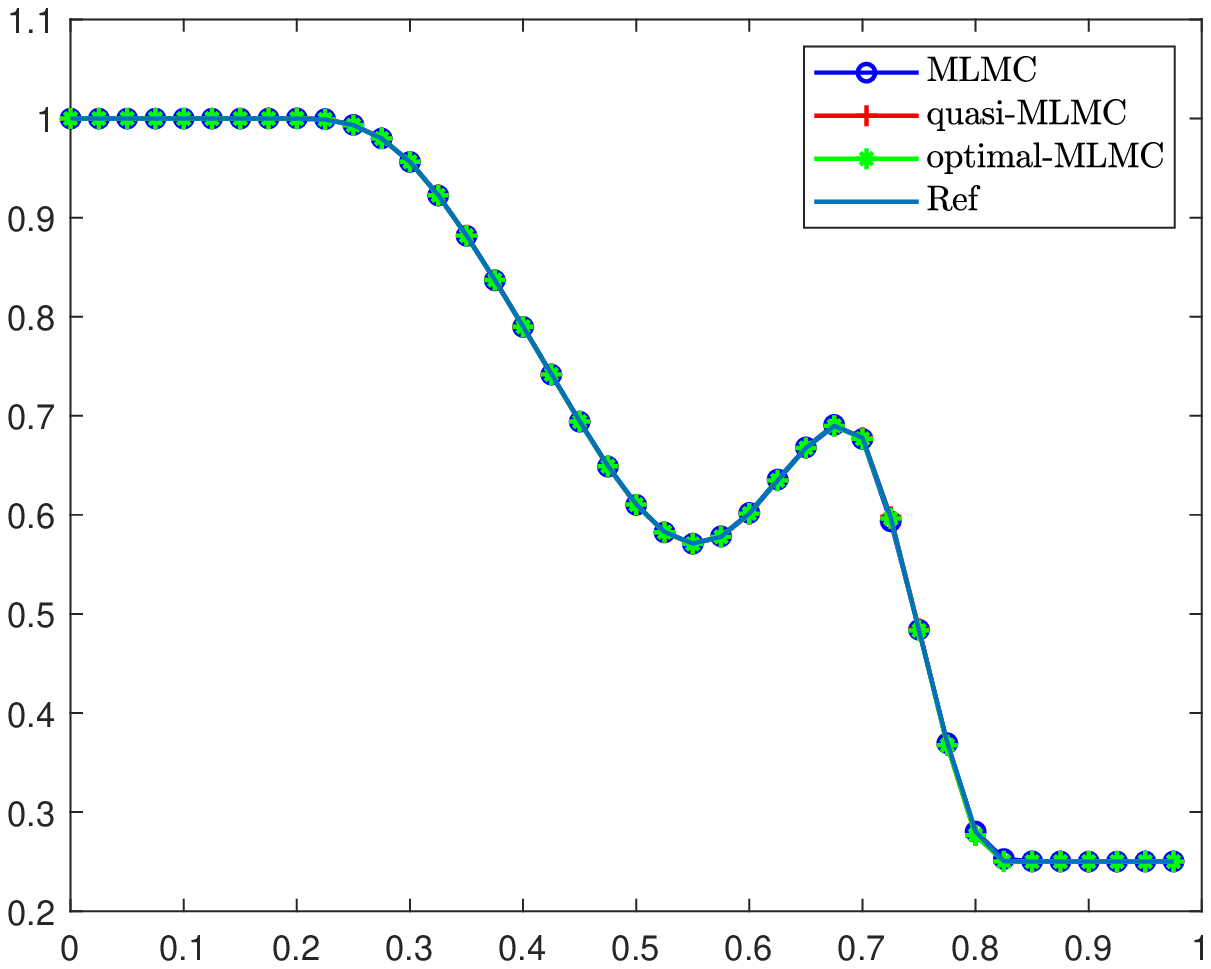}
\includegraphics[width=1.89in]{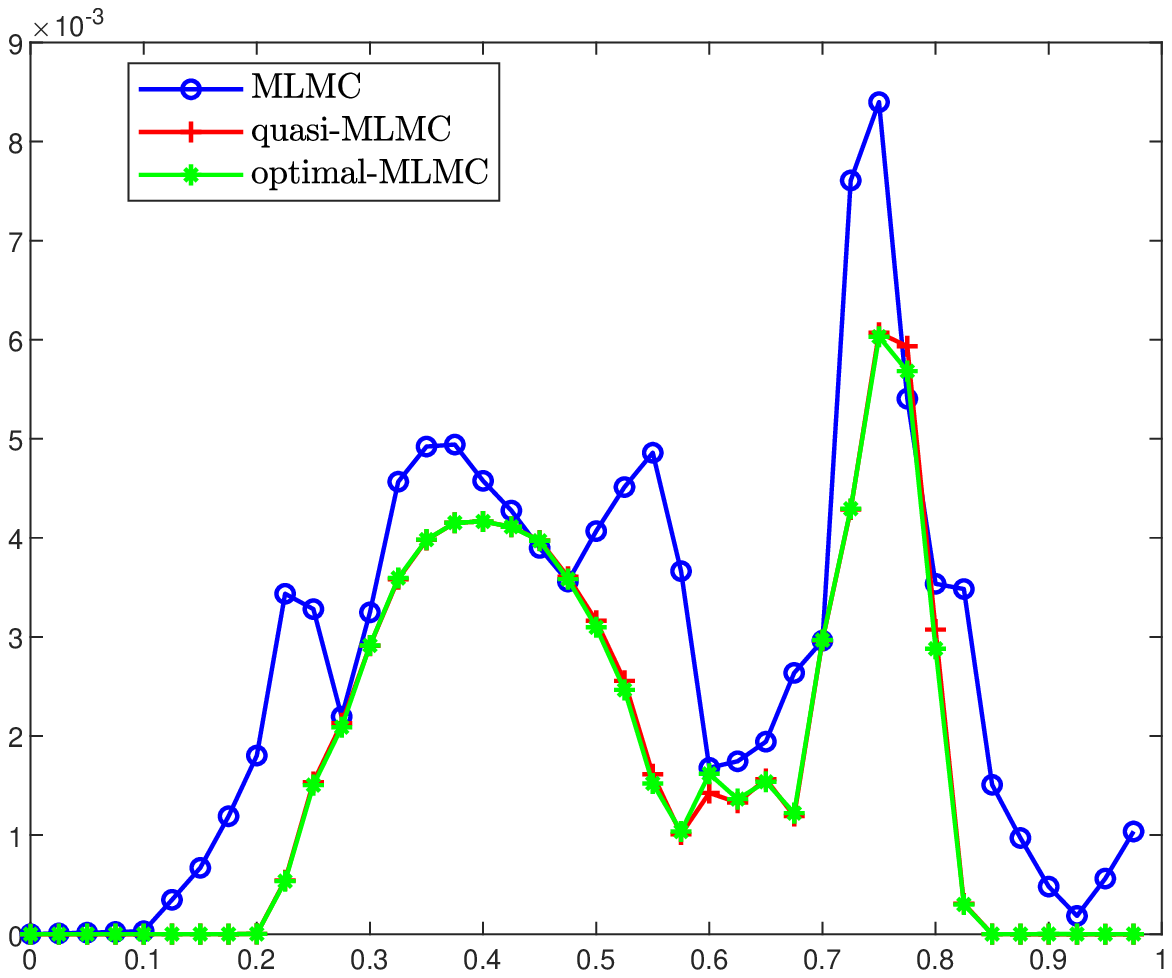}
\includegraphics[width=1.89in]{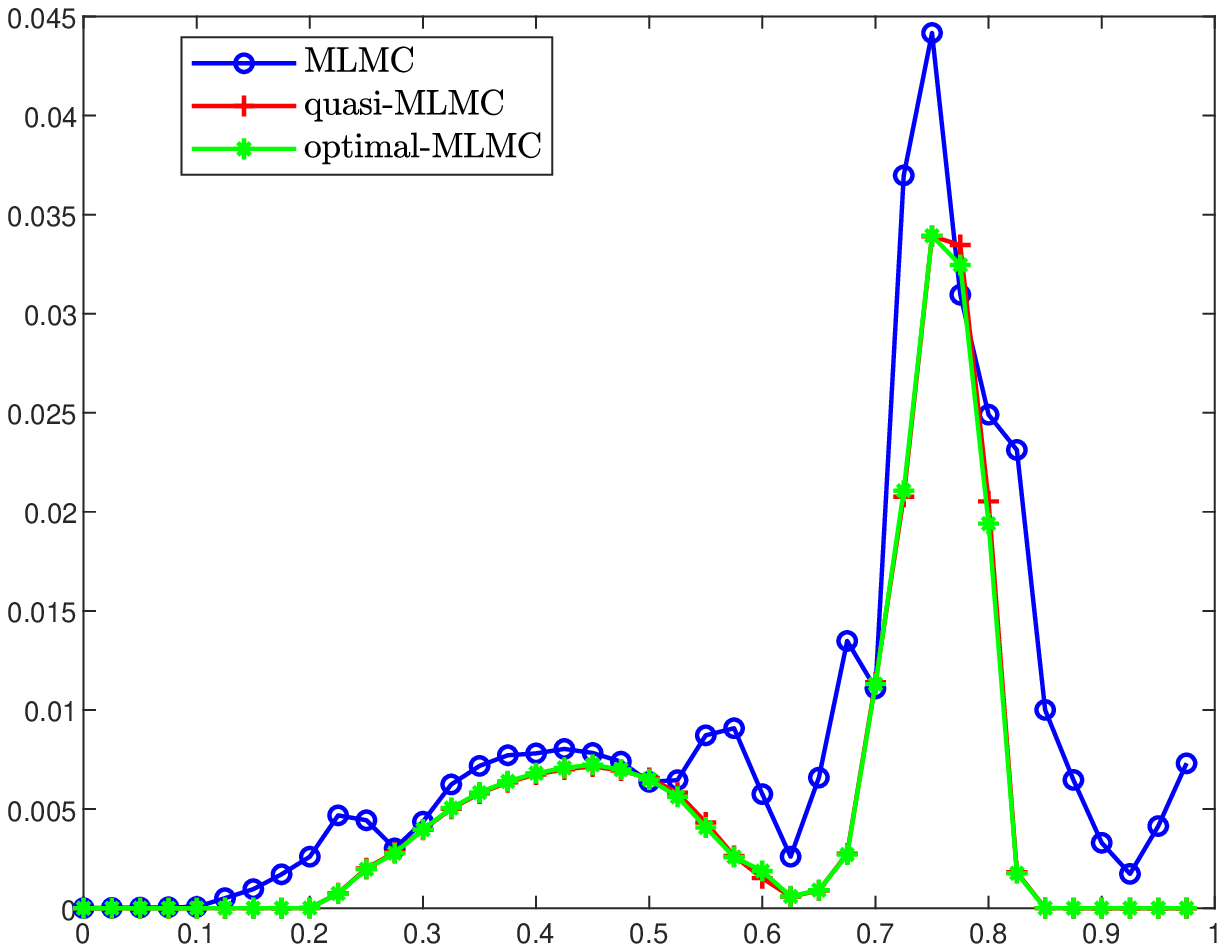}
\includegraphics[width=1.89in]{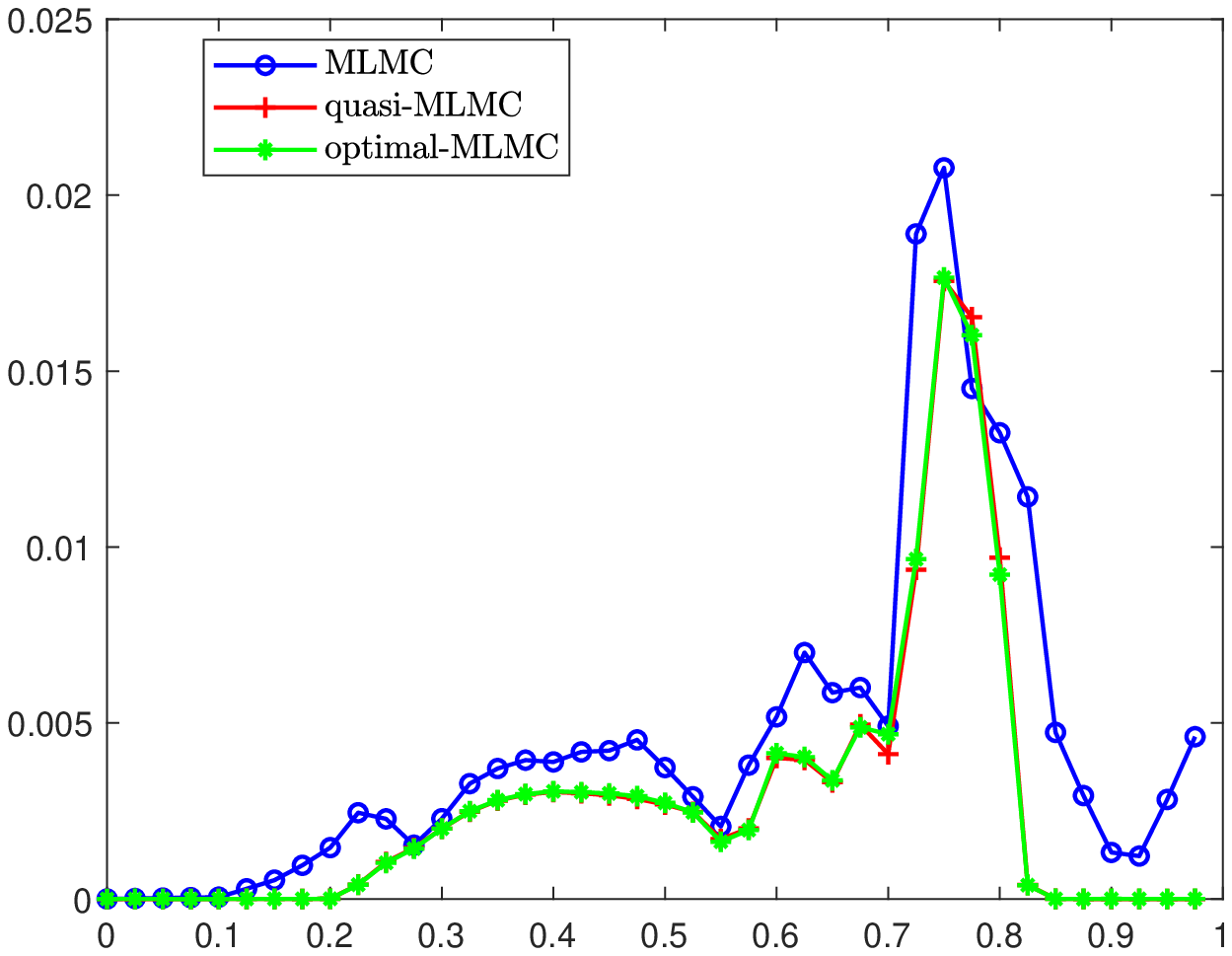}
\caption{Test 2 (I):  Approximated expectation of density $\mathbb{E}[\rho]$ (left), velocity $\mathbb{E}[U]$ (middle) and temperature $\mathbb{E}[T]$ (right) using MLMC, quasi-optimal MLMC and optimal MLMC methods at time $t=0.15$ (top row). Relative error \cref{def:relativespatialerror} of expectation of density (left), velocity (middle) and temperature (right) using three MLMC methods (bottom row).}
\label{fig:st1exp}
\end{center}
\end{figure}

\begin{figure}[tb]
\begin{center}
\includegraphics[width=1.89in]{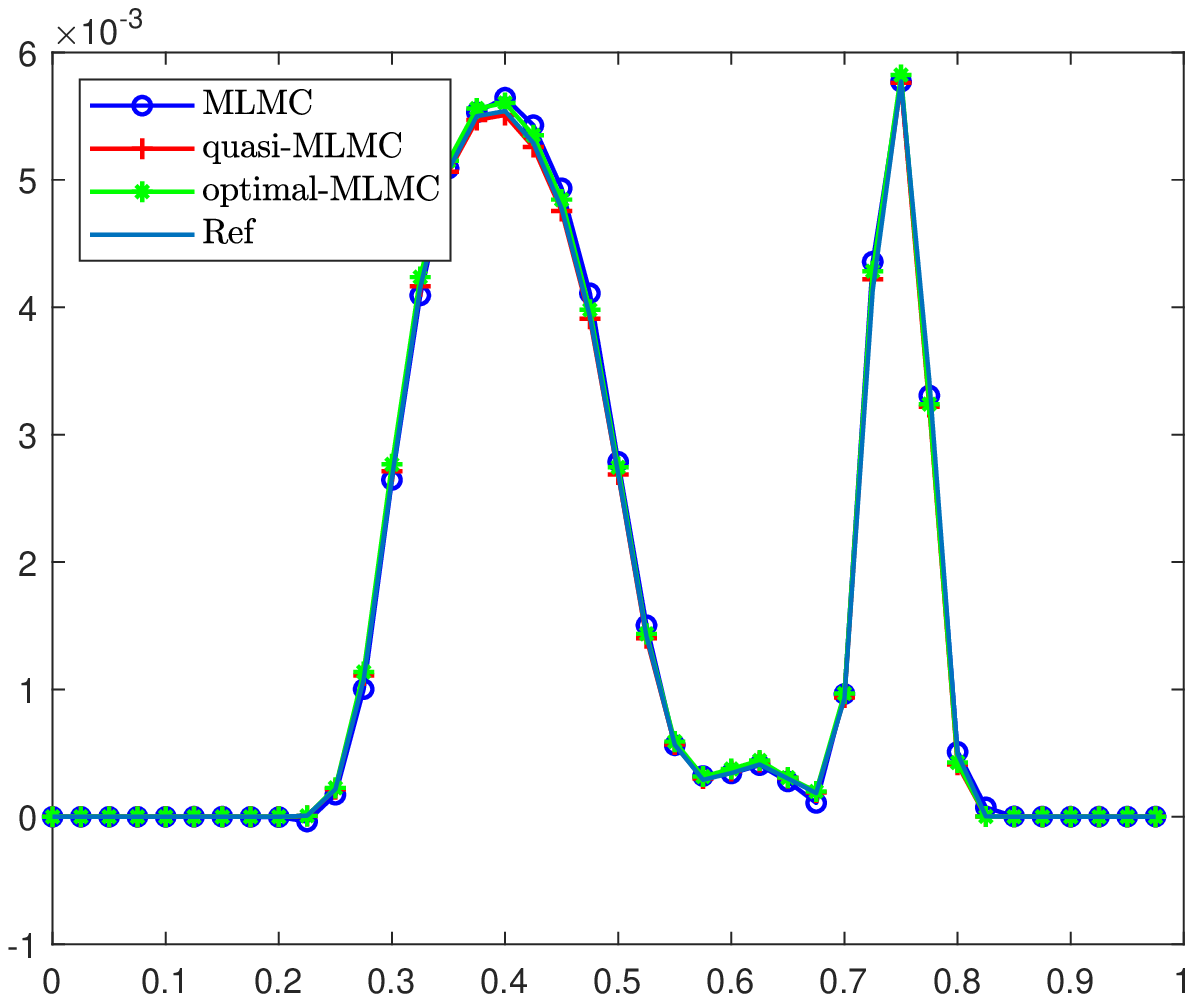}
\includegraphics[width=1.89in]{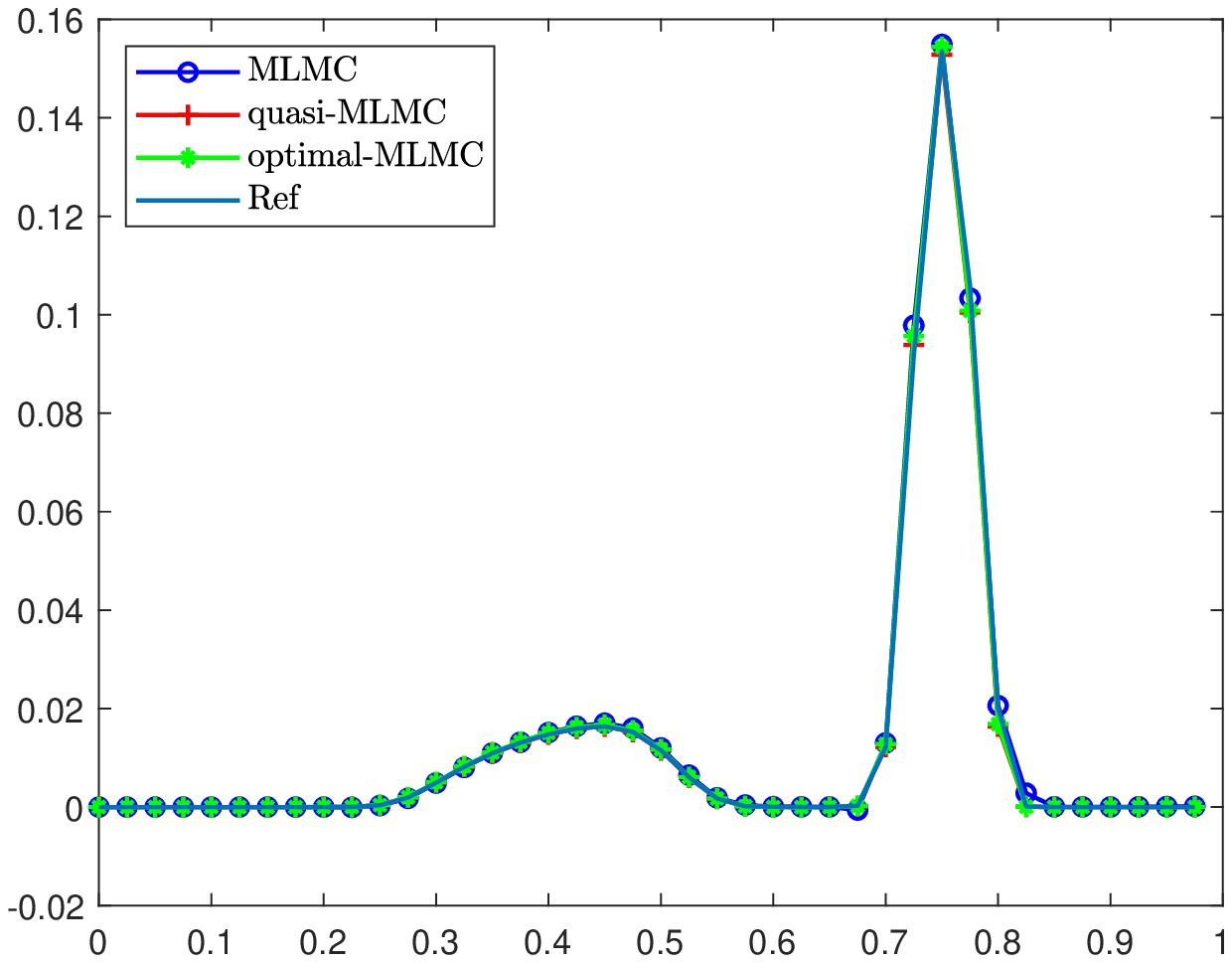}
\includegraphics[width=1.89in]{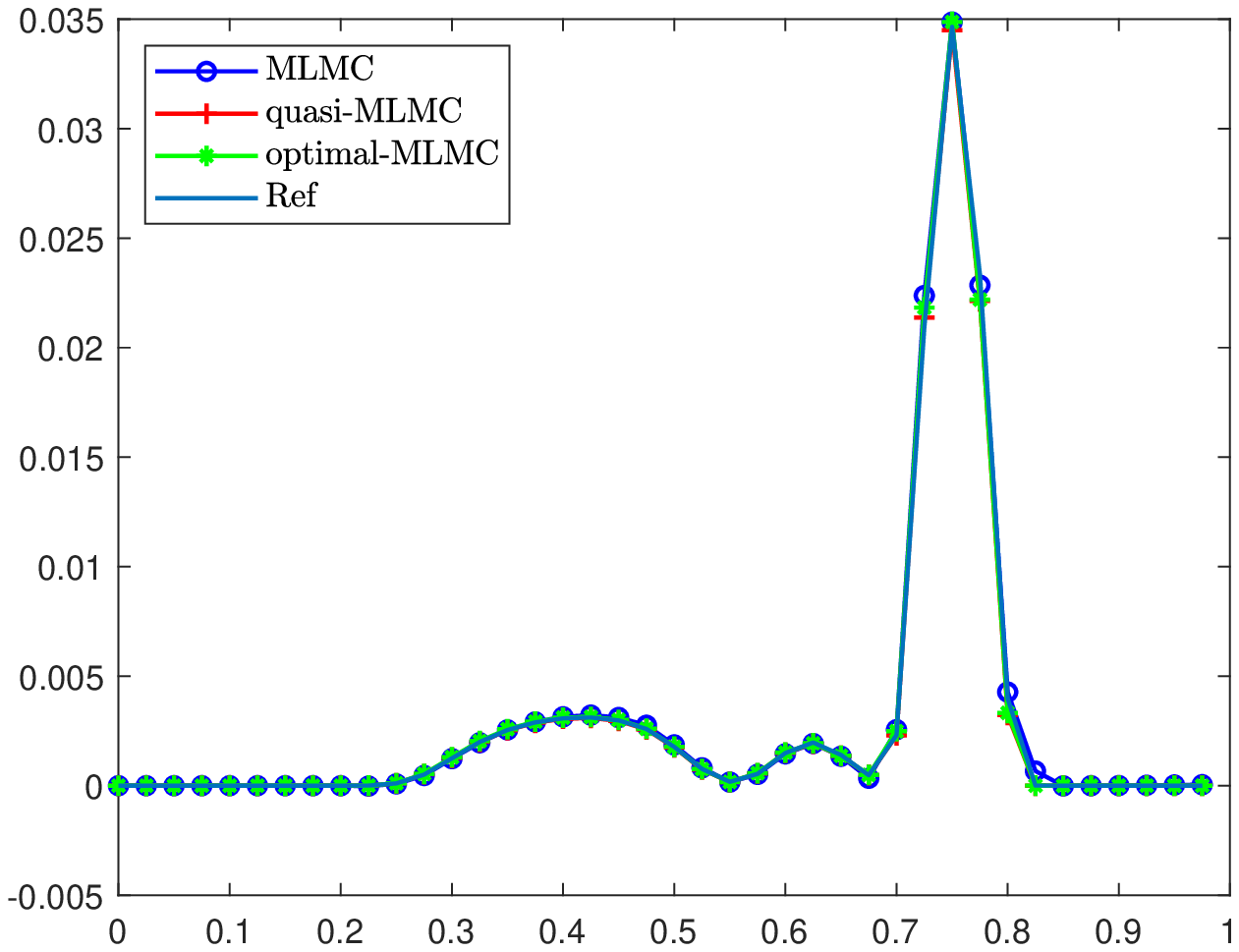}
\includegraphics[width=1.89in]{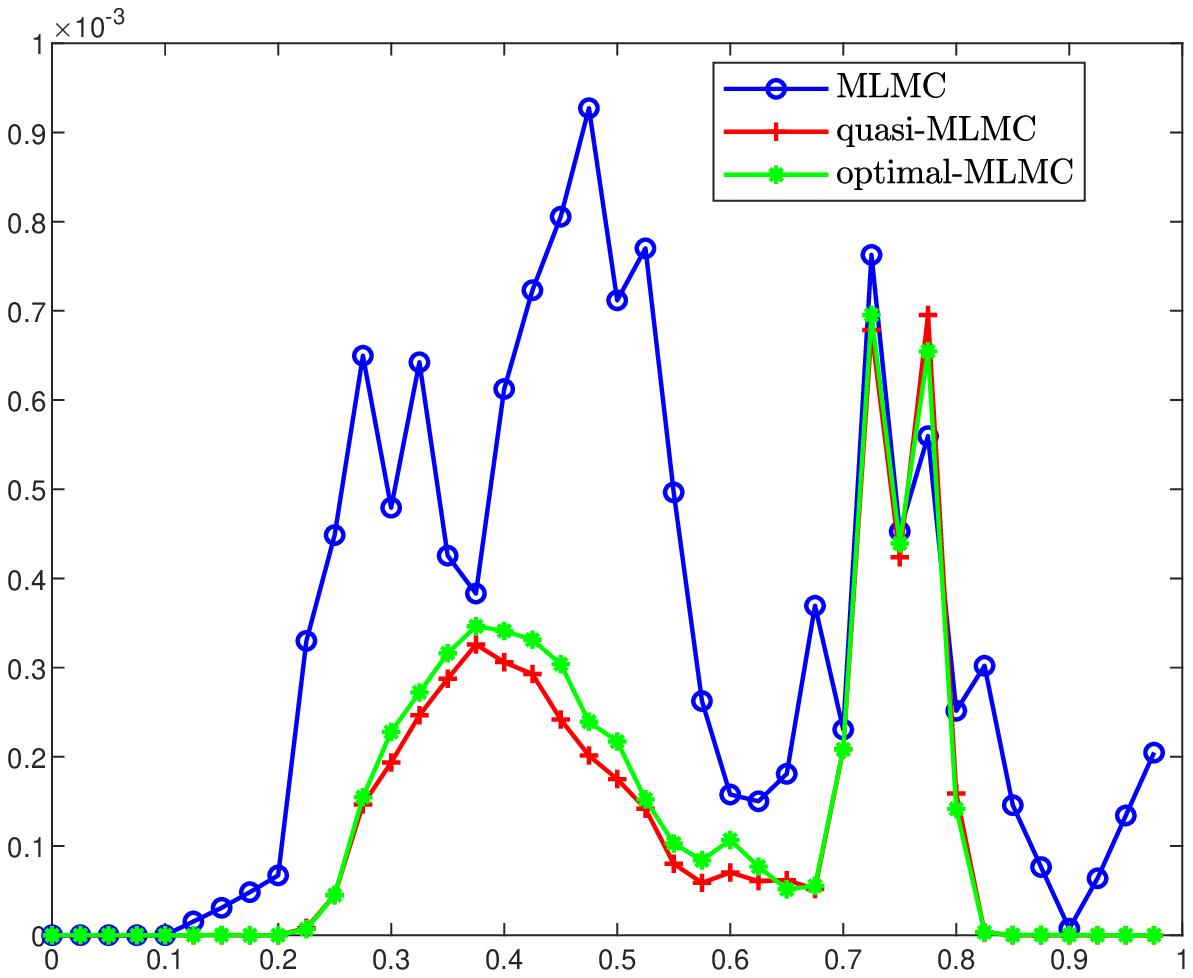}
\includegraphics[width=1.89in]{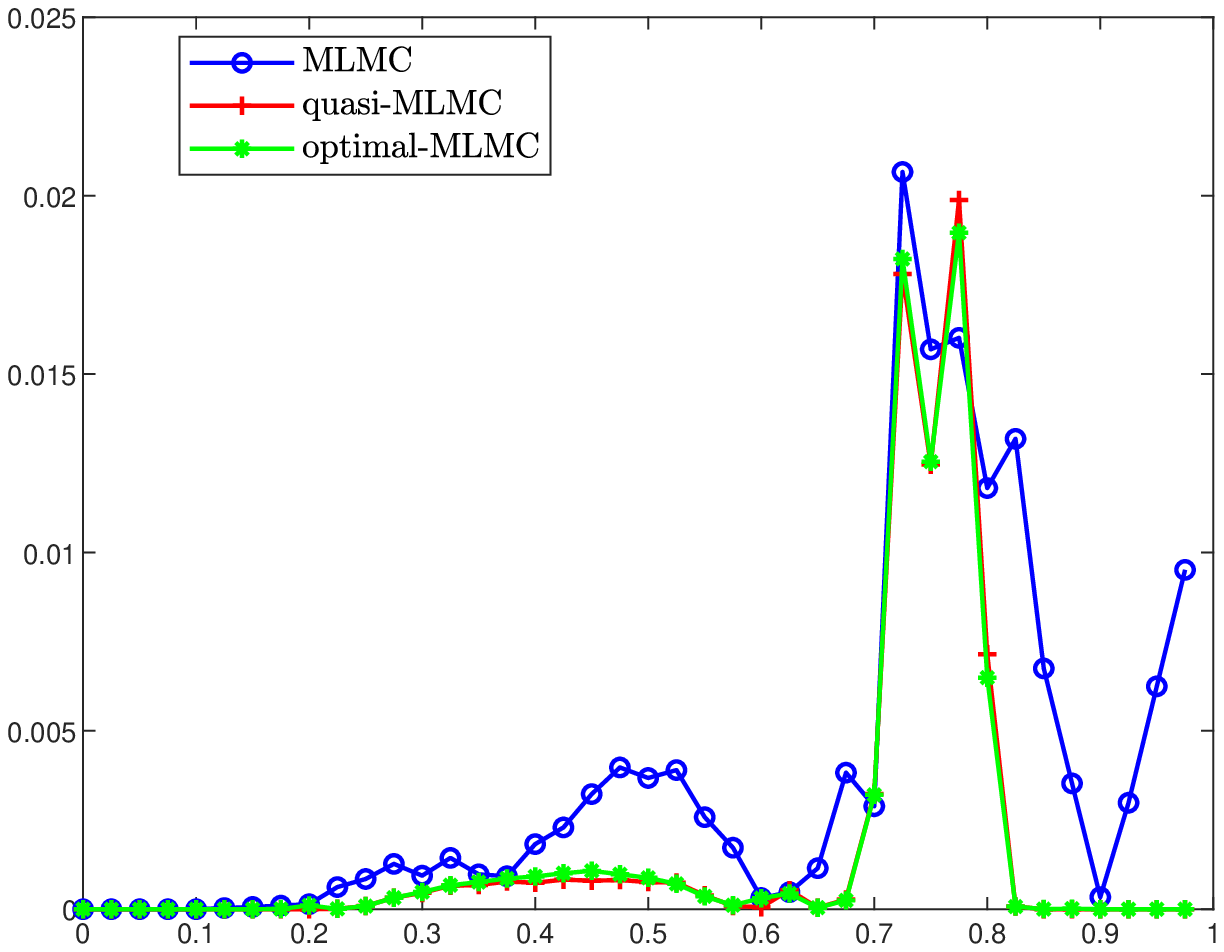}
\includegraphics[width=1.89in]{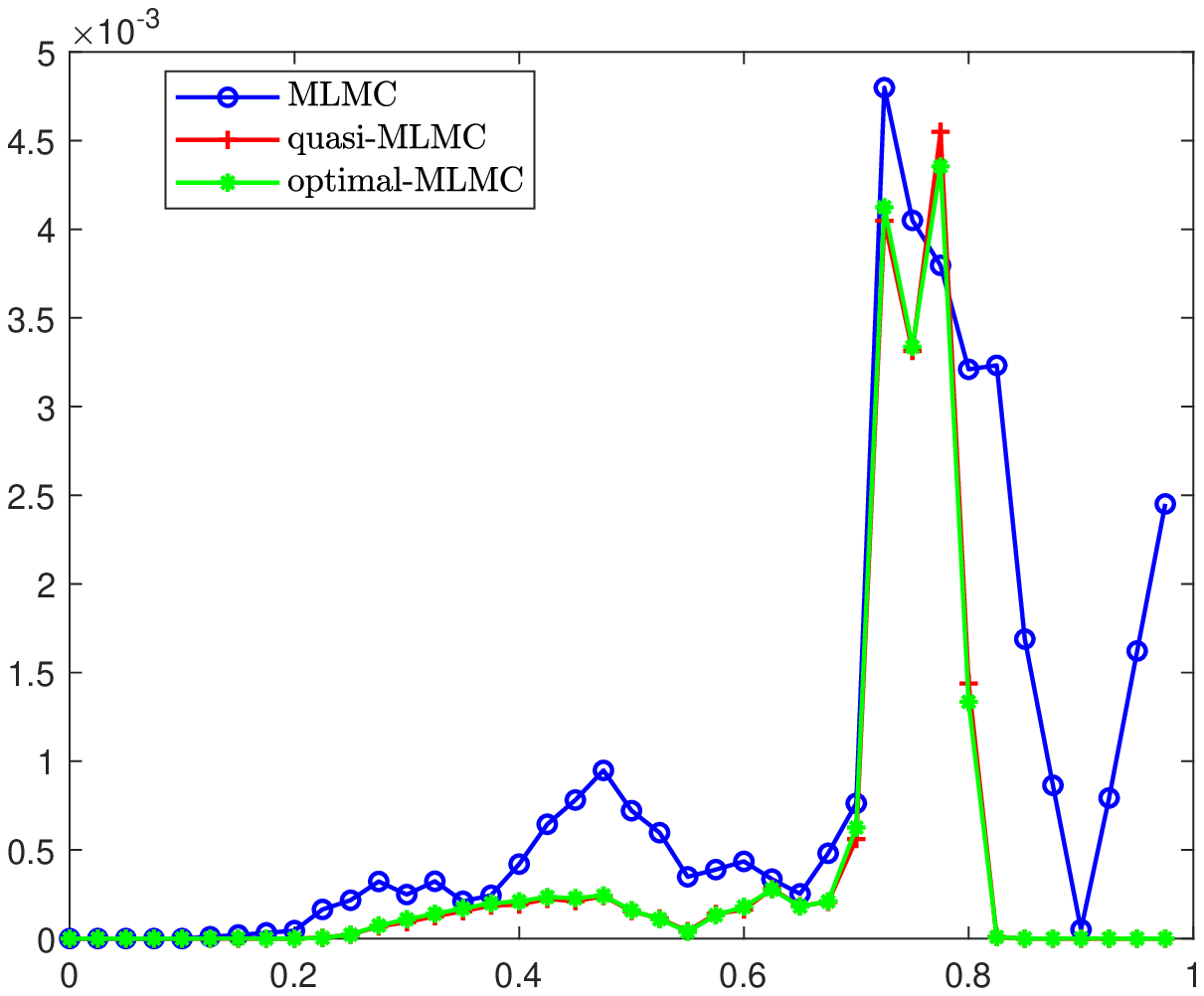}
\caption{Test 2 (I): Approximated variance of density $\mathbb{V}[\rho]$ (left), velocity $\mathbb{V}[U]$ (middle) and temperature $\mathbb{V}[T]$ (right) using MLMC, quasi-optimal MLMC and optimal MLMC methods at time $t=0.15$ (top row). Relative error \cref{def:relativespatialerror} of variance of density (left), velocity (middle) and temperature (right) using three methods (bottom row).}
\label{fig:st1var}
\end{center}
\end{figure}

\begin{figure}[tb]
\begin{center}
\includegraphics[width=1.89in]{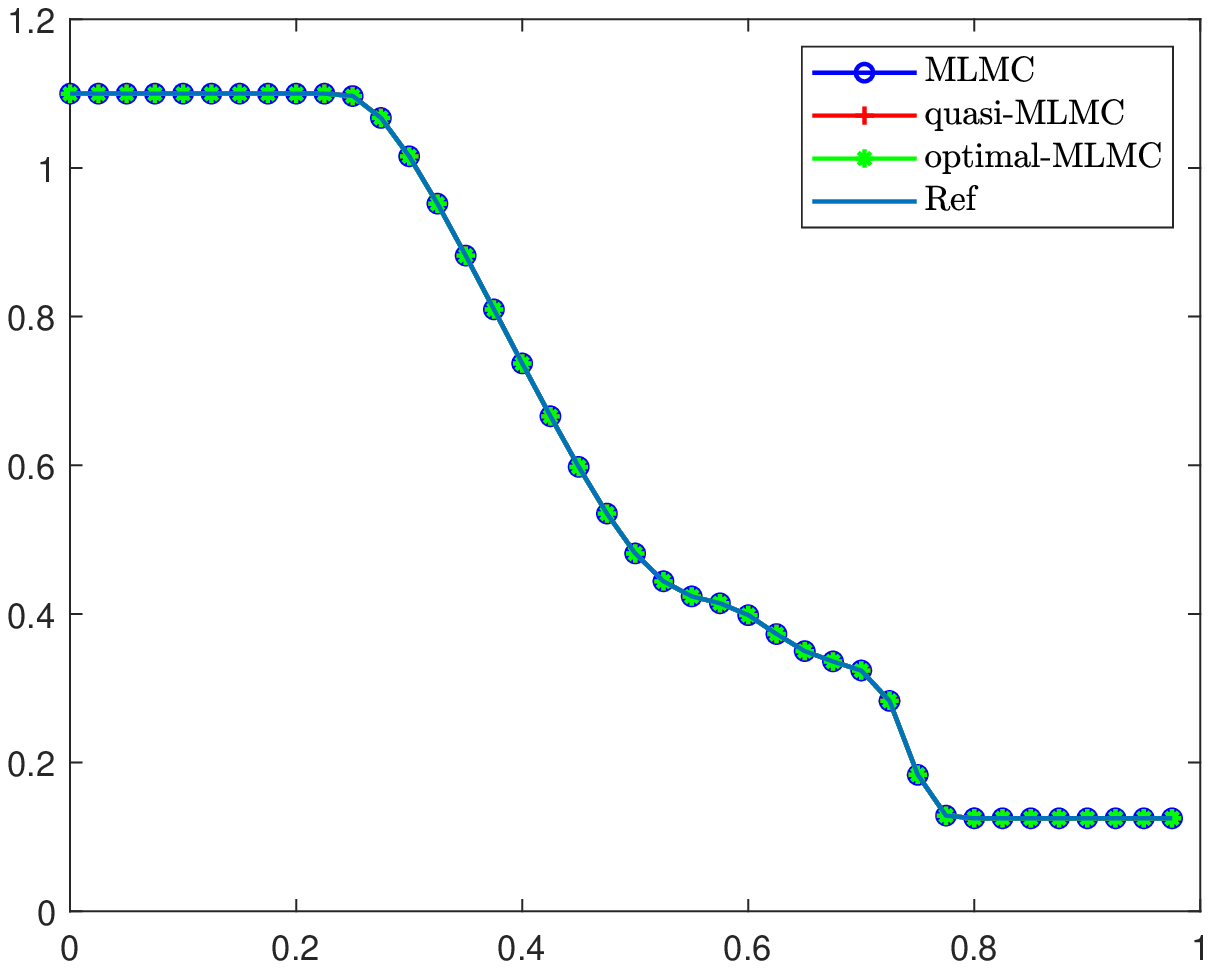}
\includegraphics[width=1.89in]{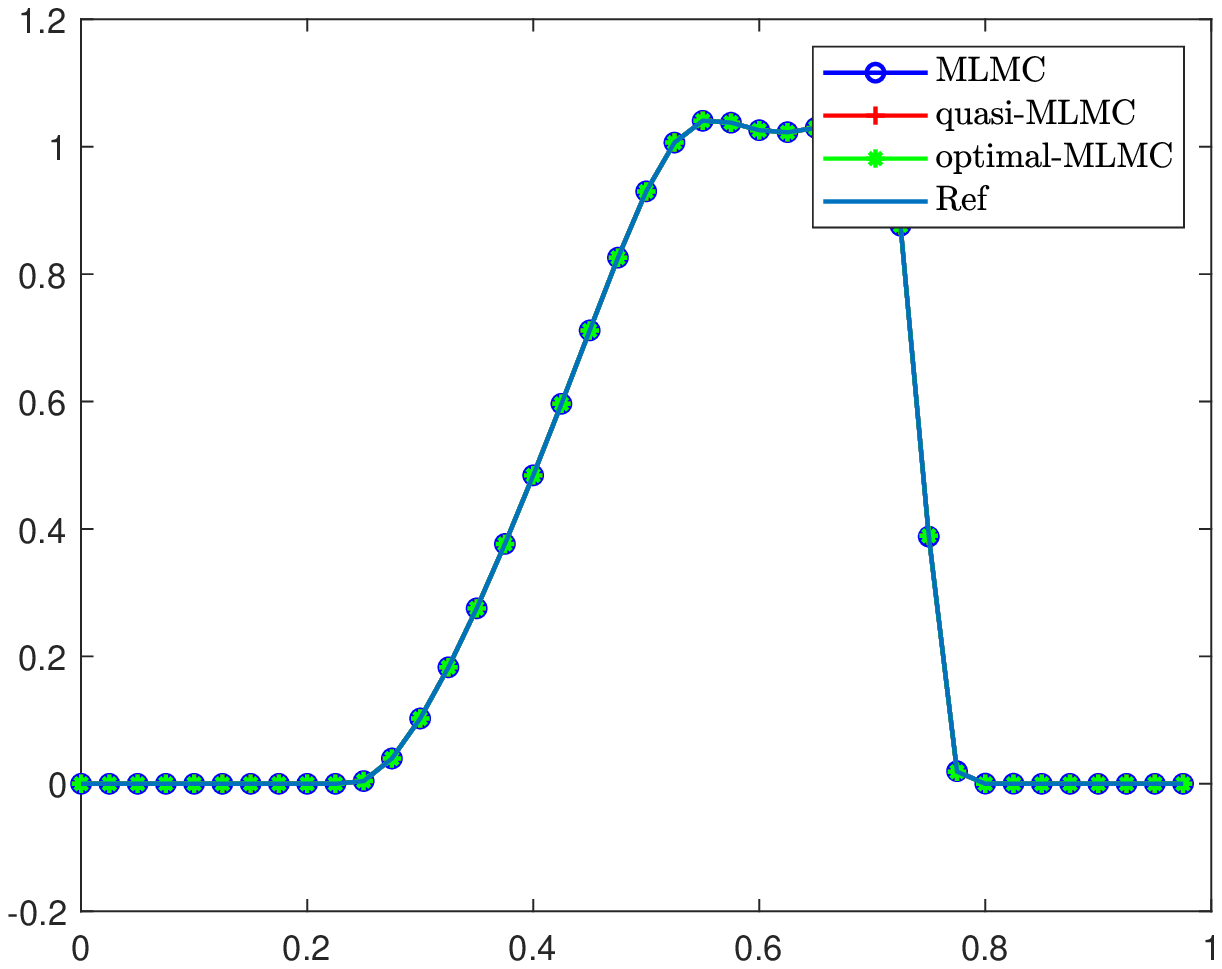}
\includegraphics[width=1.89in]{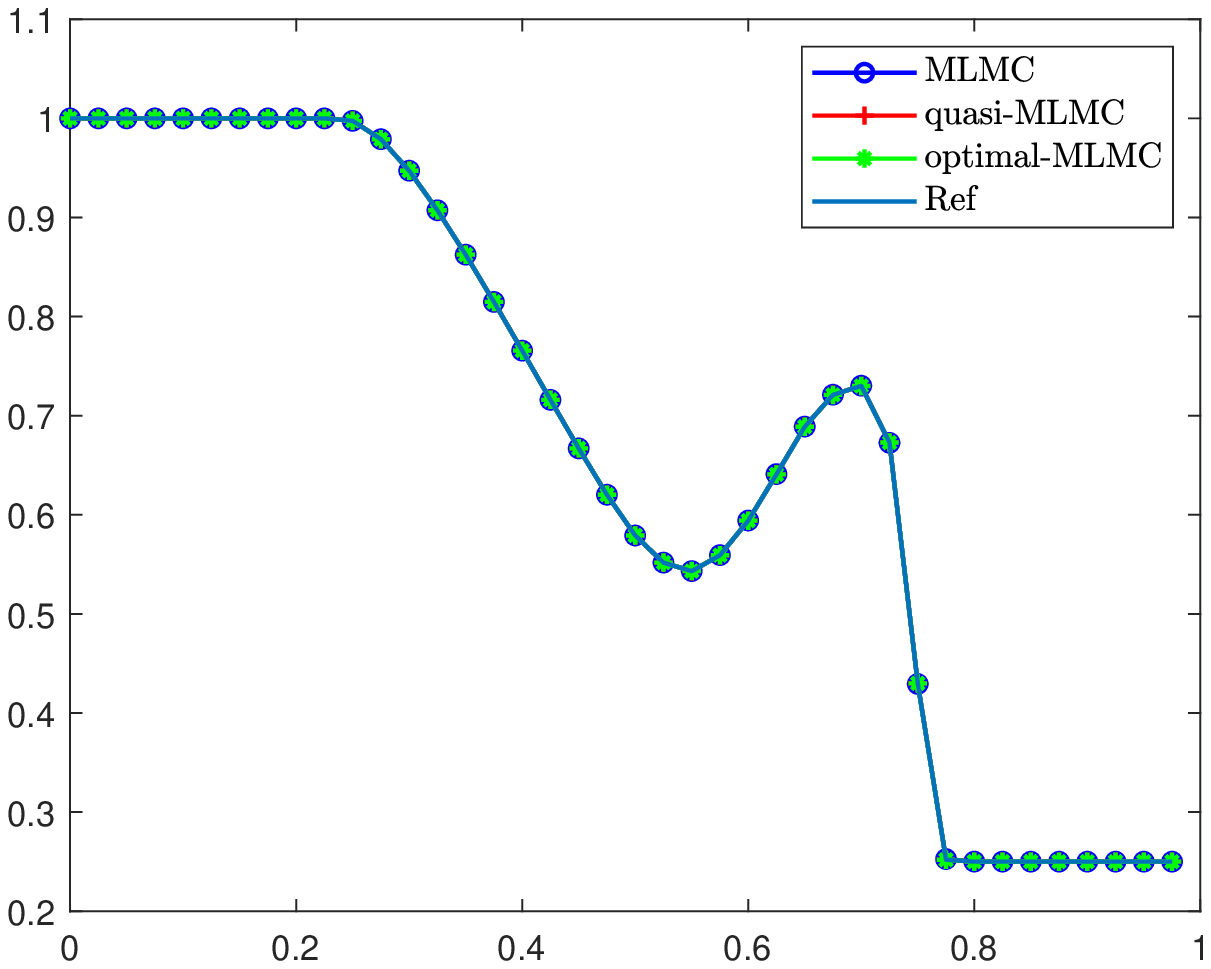}
\includegraphics[width=1.89in]{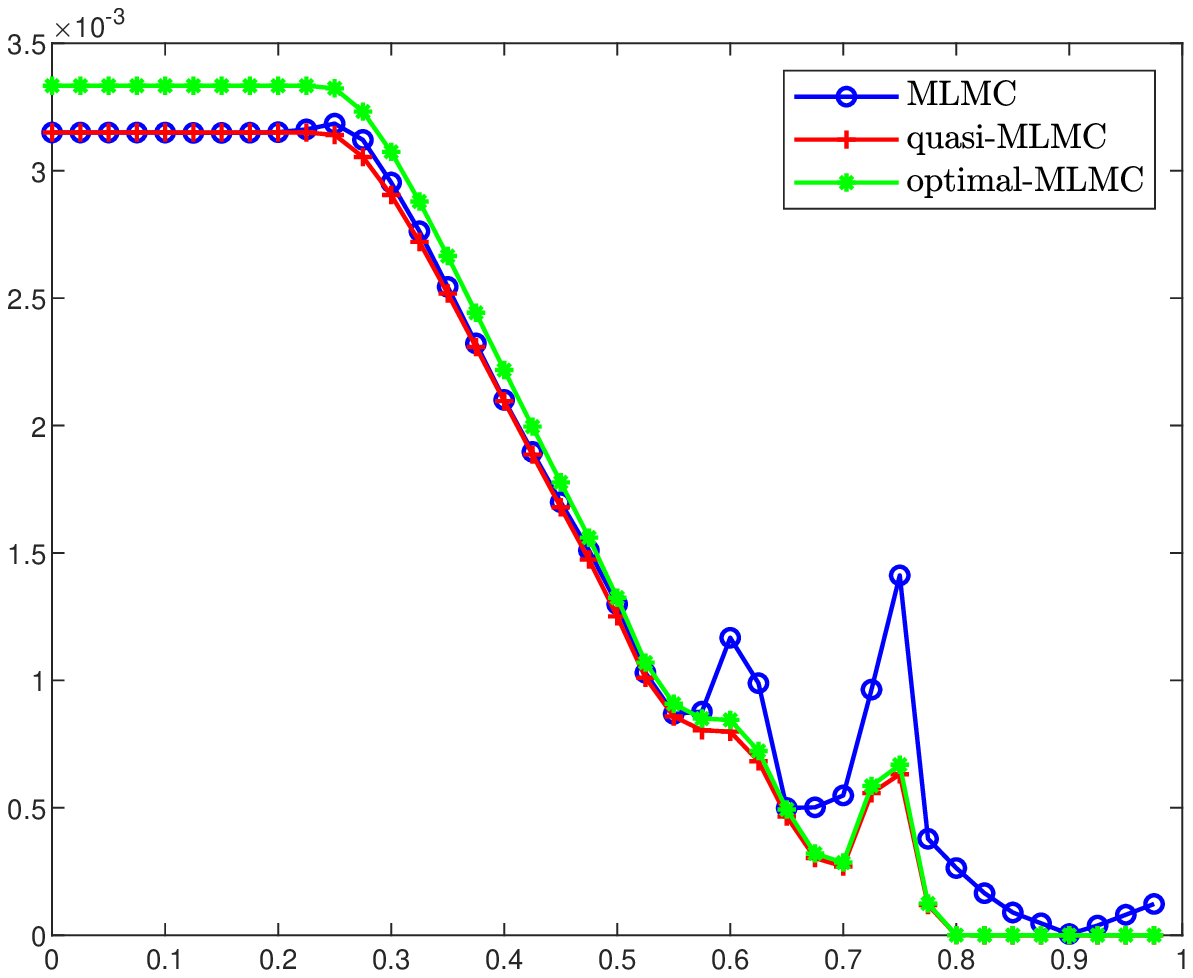}
\includegraphics[width=1.89in]{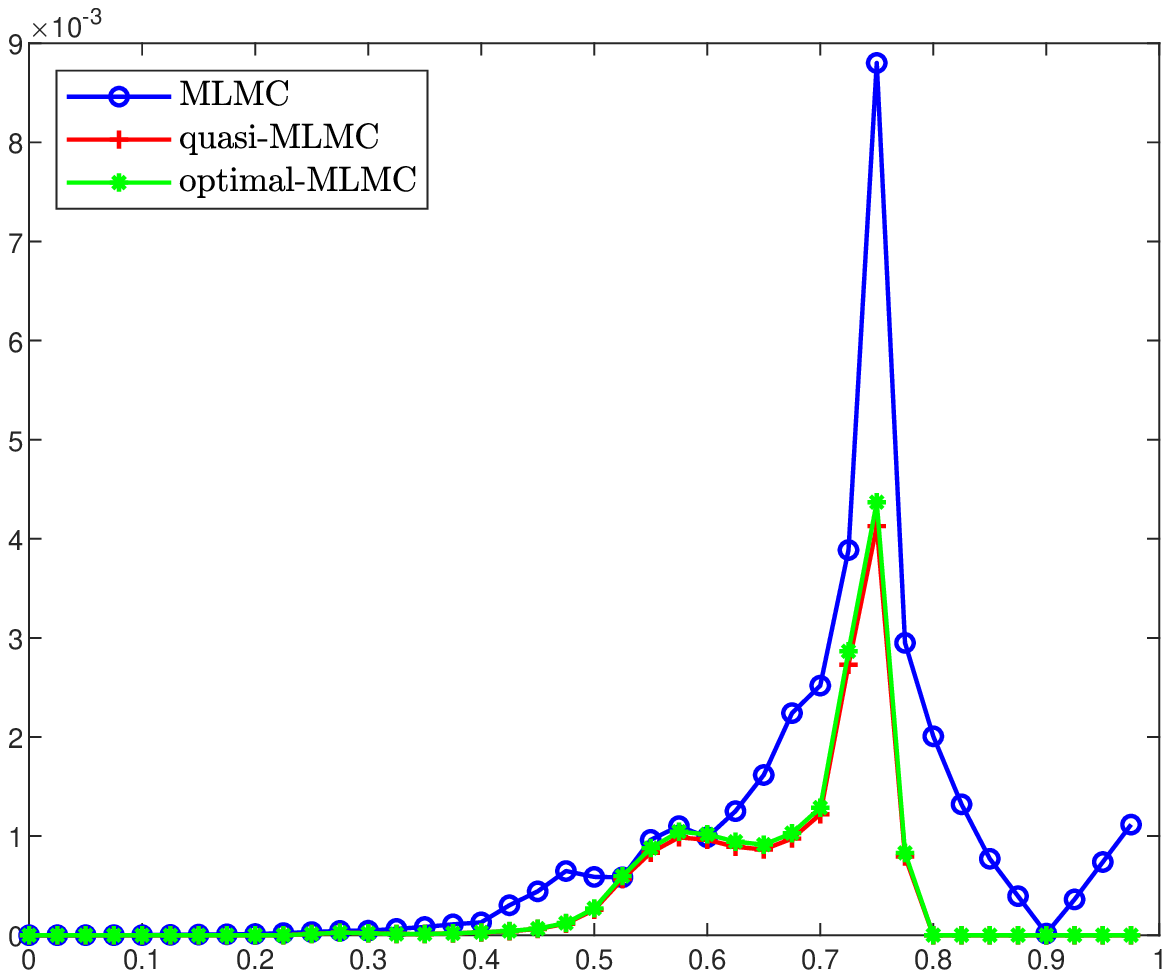}
\includegraphics[width=1.89in]{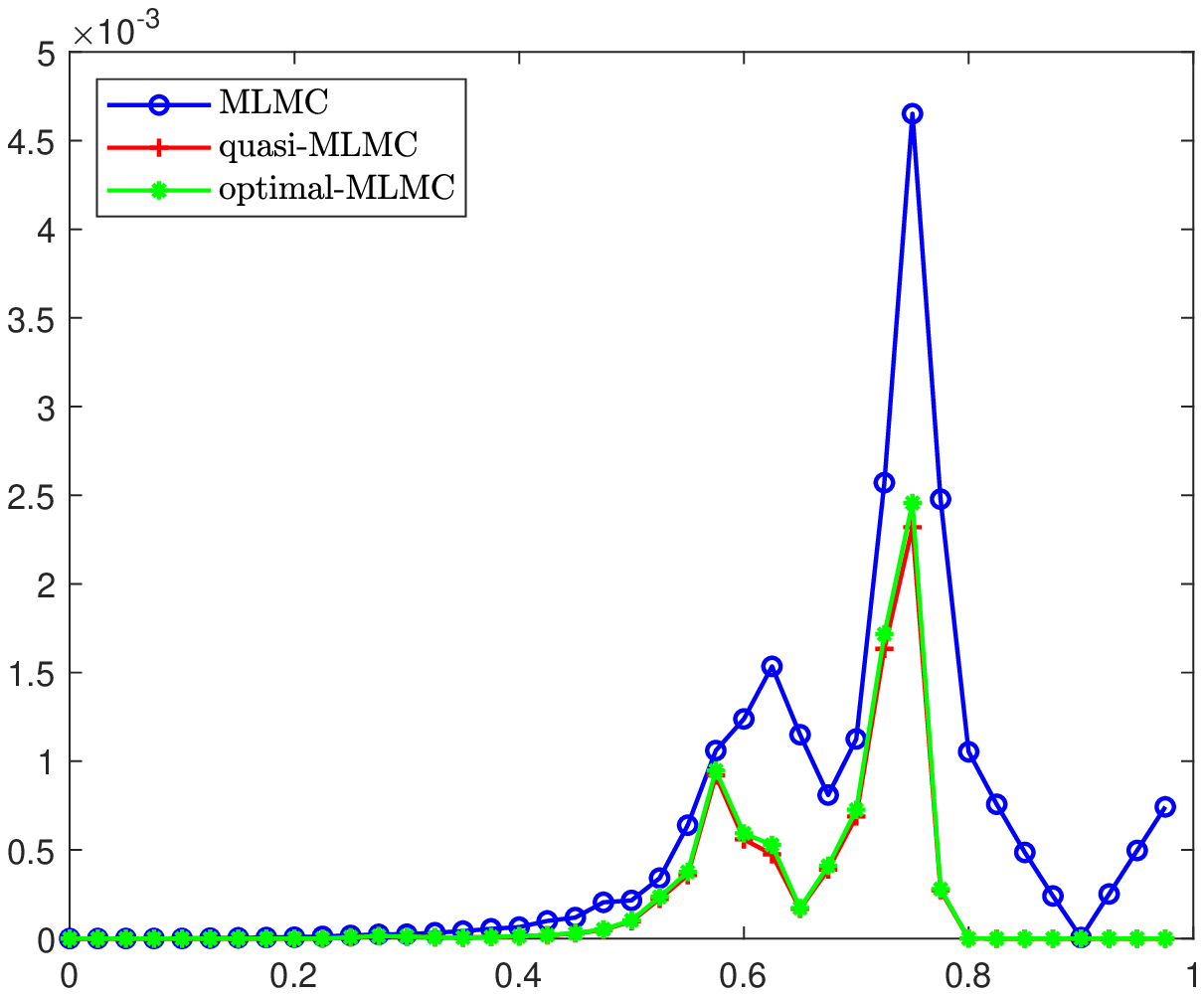}
\caption{Test 2 (II): Approximated expectation of density $\mathbb{E}[\rho]$ (left), velocity $\mathbb{E}[U]$ (middle) and temperature $\mathbb{E}[T]$ (right) using MLMC, quasi-optimal MLMC and optimal MLMC methods at time $t=0.15$ (top row). Relative error \cref{def:relativespatialerror} of expectation of density (left), velocity (middle) and temperature (right) using three MLMC methods (bottom row).}
\label{fig:st2exp}
\end{center}
\end{figure}

\begin{figure}[tb]
\begin{center}
\includegraphics[width=1.89in]{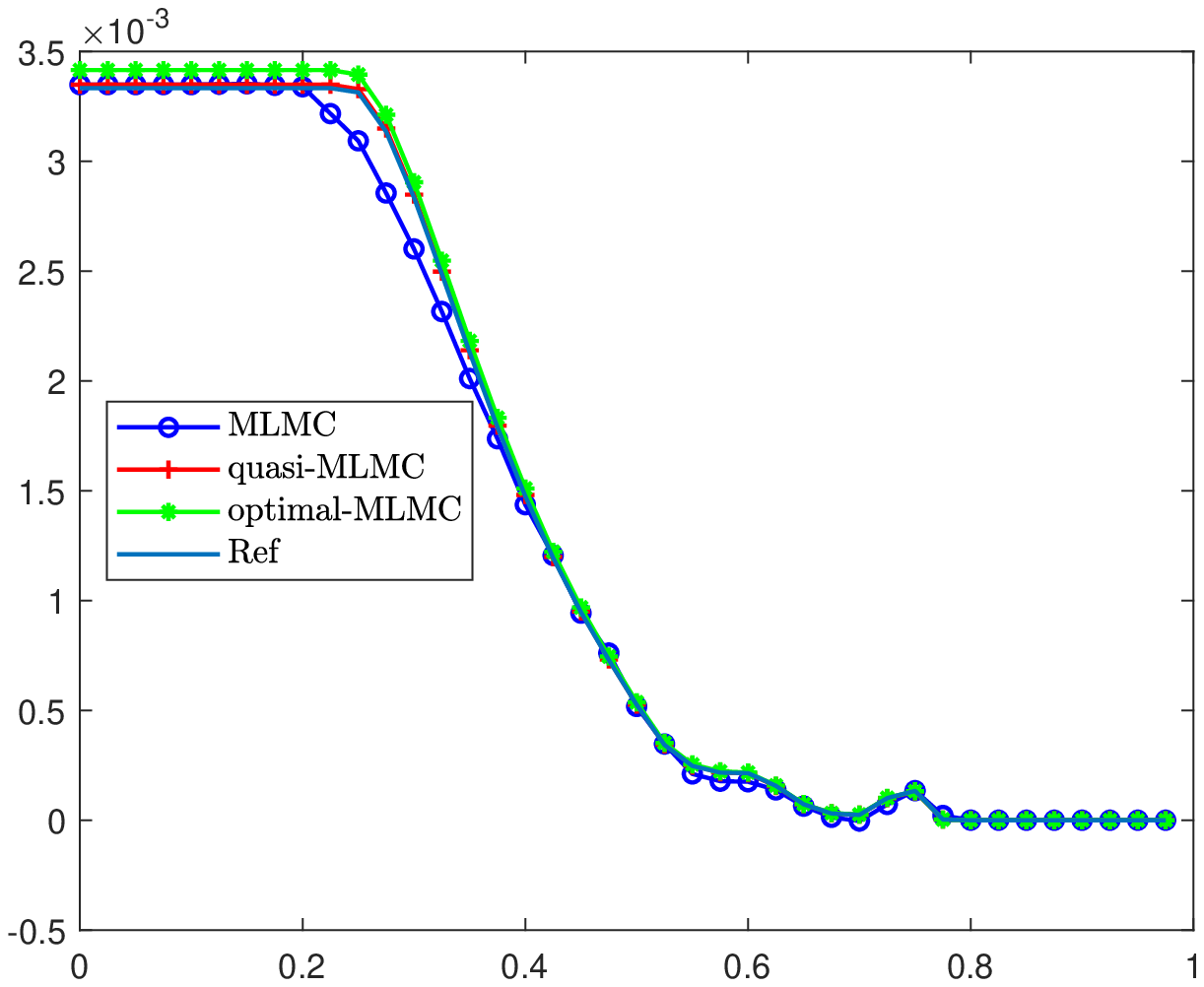}
\includegraphics[width=1.89in]{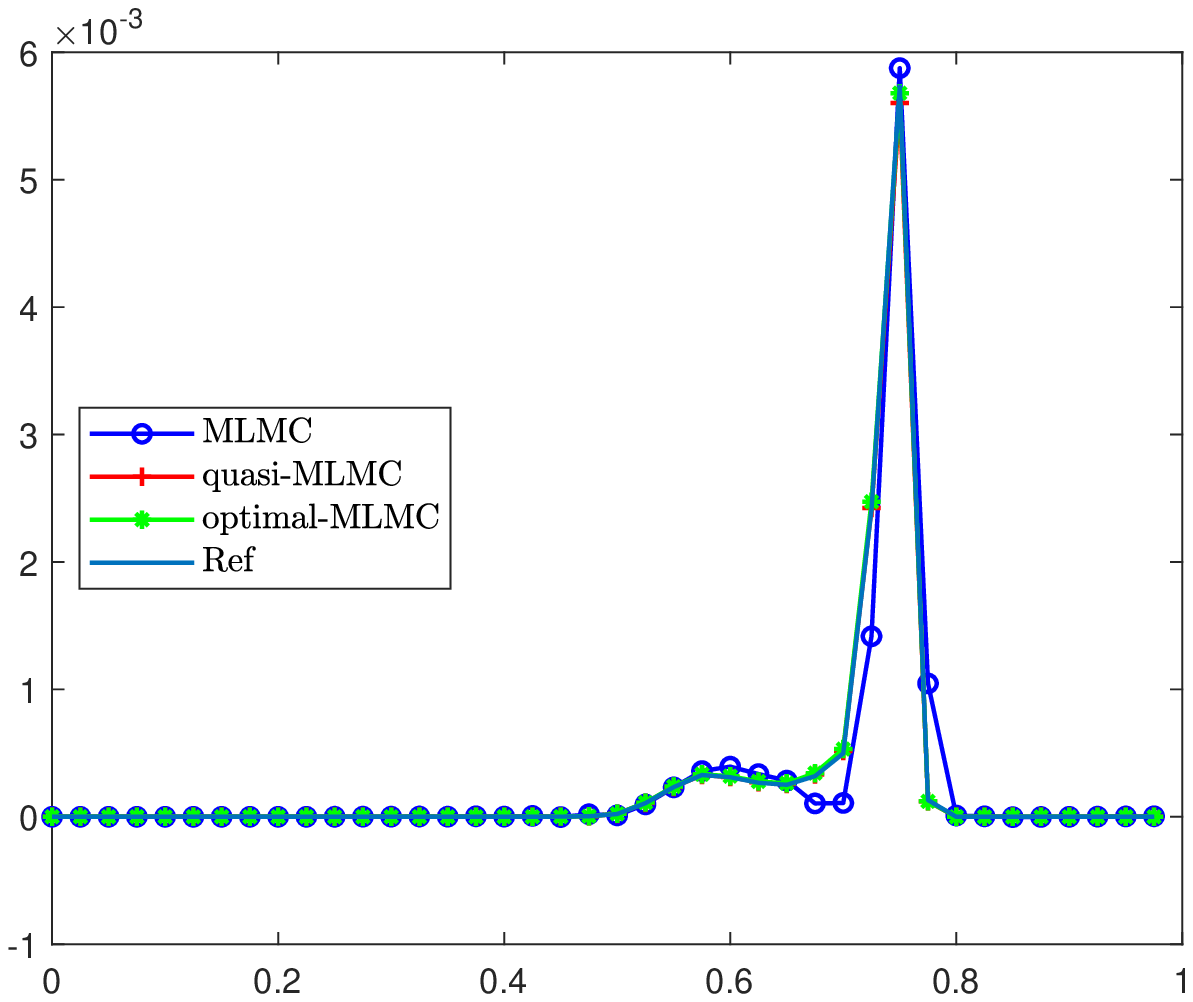}
\includegraphics[width=1.89in]{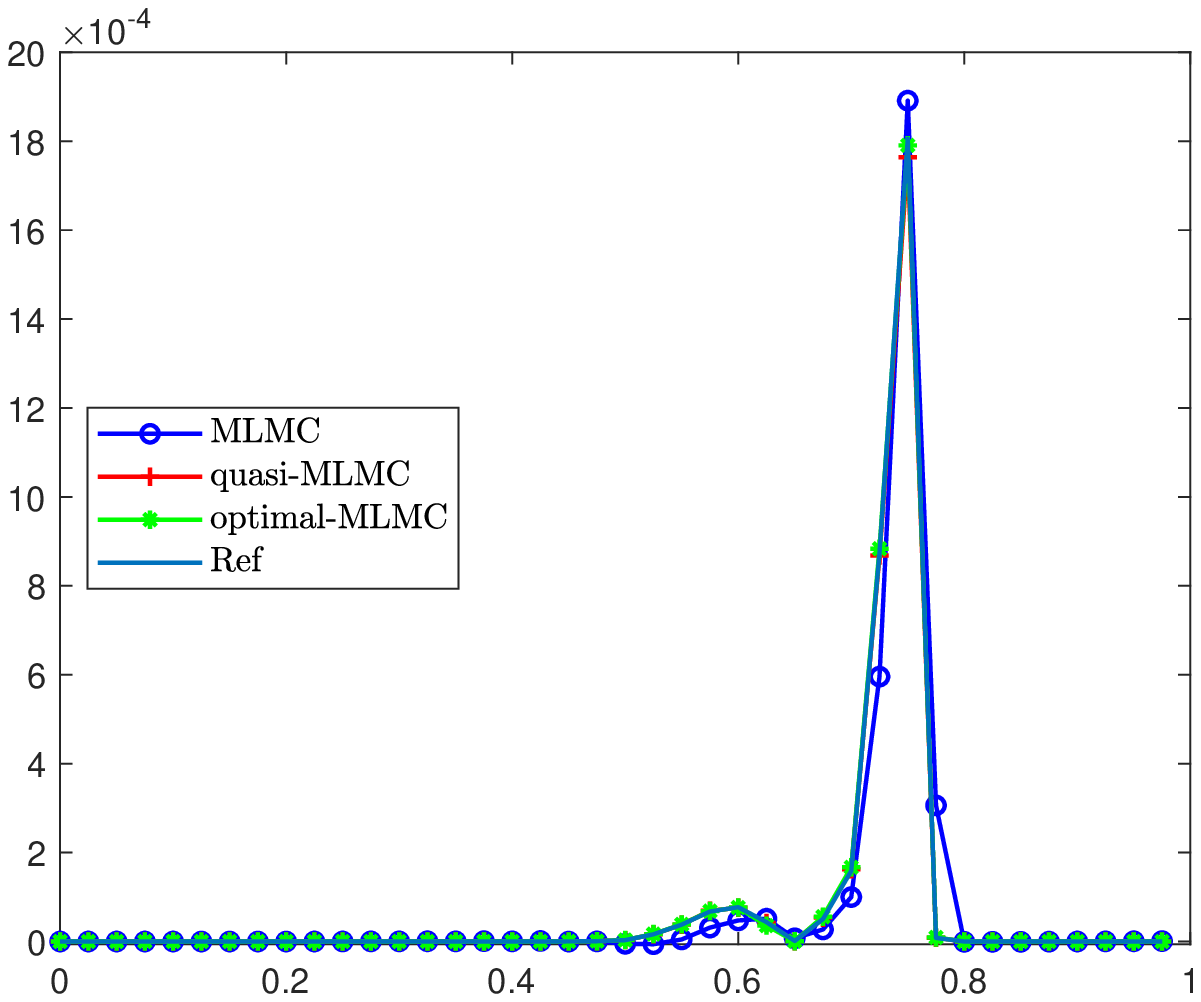}
\includegraphics[width=1.89in]{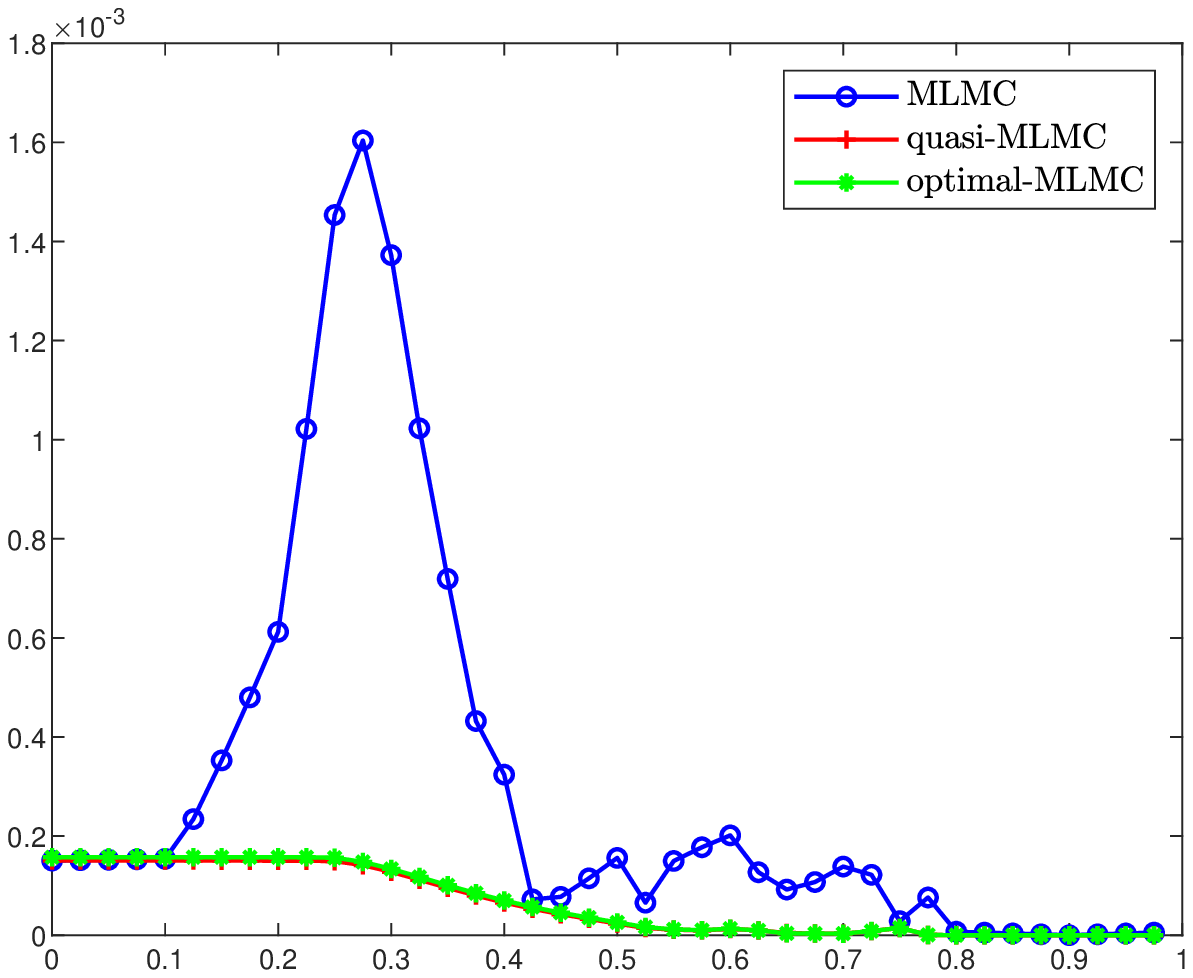}
\includegraphics[width=1.89in]{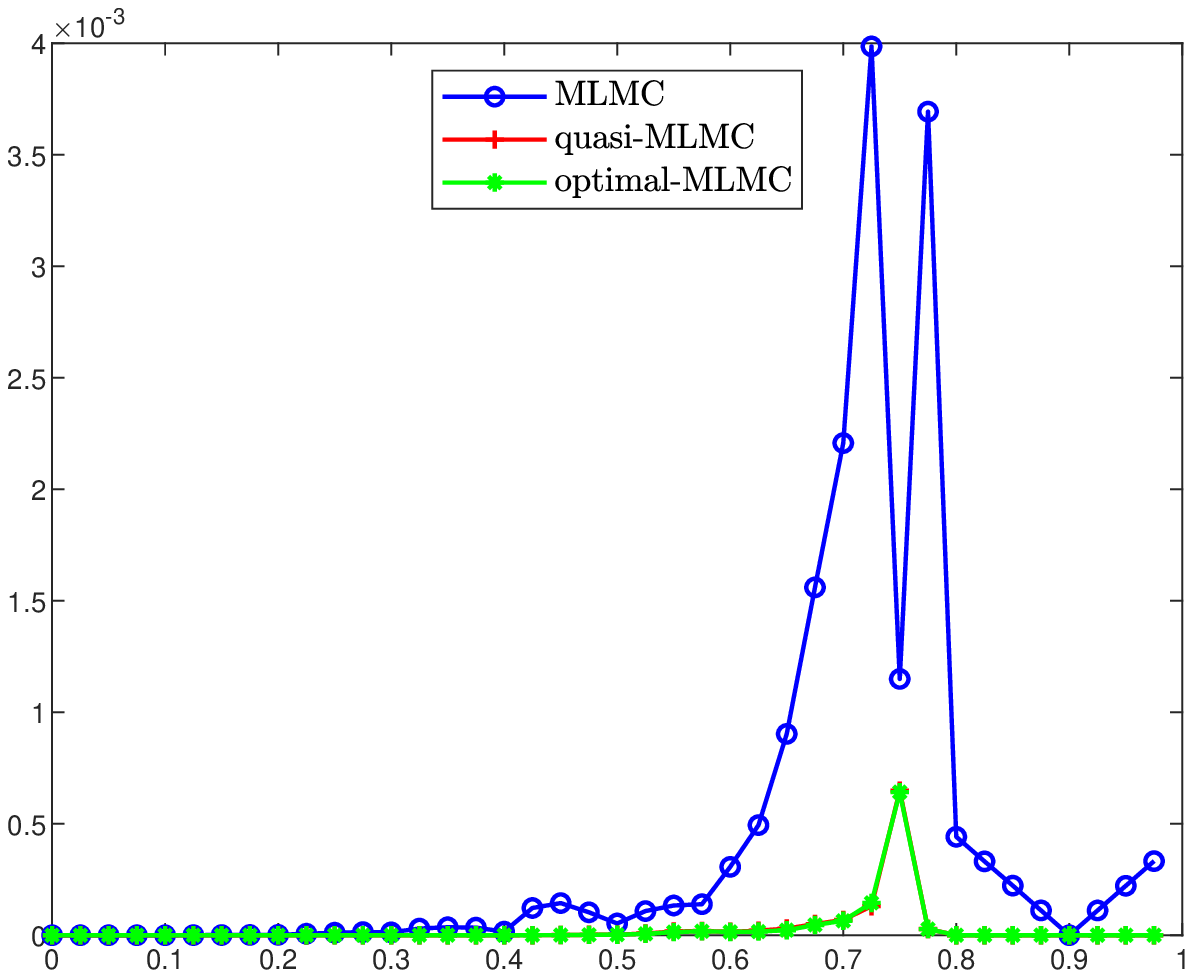}
\includegraphics[width=1.89in]{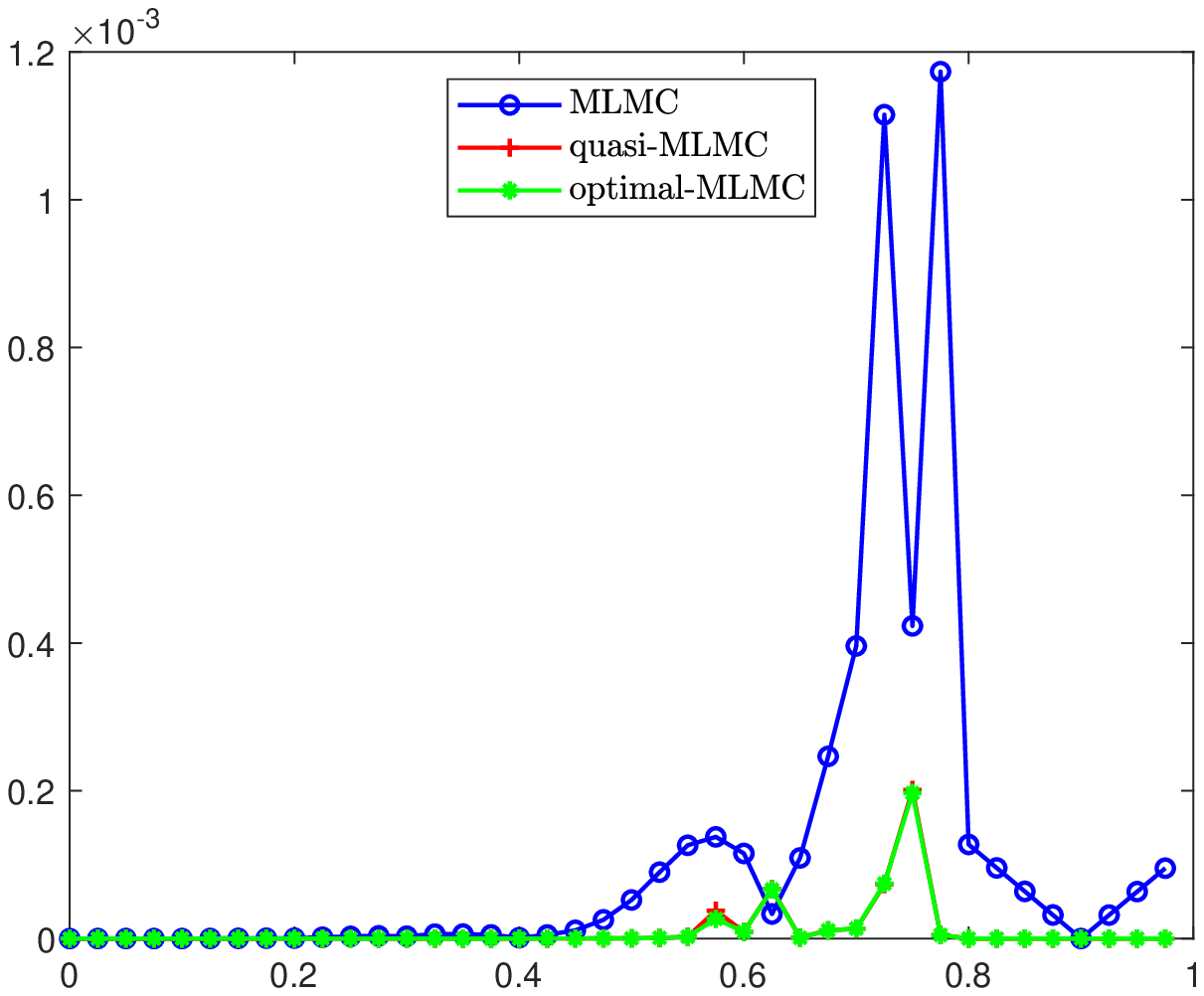}
\caption{Test 2 (II): Approximated variance of density $\mathbb{V}[\rho]$ (left), velocity $\mathbb{V}[U]$ (middle) and temperature $\mathbb{V}[T]$ (right) using MLMC, quasi-optimal MLMC and optimal MLMC methods at time $t=0.15$ (top row). Relative error \cref{def:relativespatialerror} of variance of density (left), velocity (middle) and temperature (right) using three methods (bottom row).}
\label{fig:st2var}
\end{center}
\end{figure}

To better see the difference of three MLMC methods, we plot the values of $\lambda_1$ and $\lambda_2$ in the quasi-optimal and optimal MLMC methods for both tests in \cref{fig:st1lambda} and  \ref{fig:st2lambda}. It is clear that for these problems with shocks/discontinuities the values are far from one in various regions of the computational domain. This is particularly true for the temperature and velocity in agreement with the corresponding errors observed in the previous figures.

\begin{figure}[tb]
\begin{center}
\includegraphics[width=2.2in]{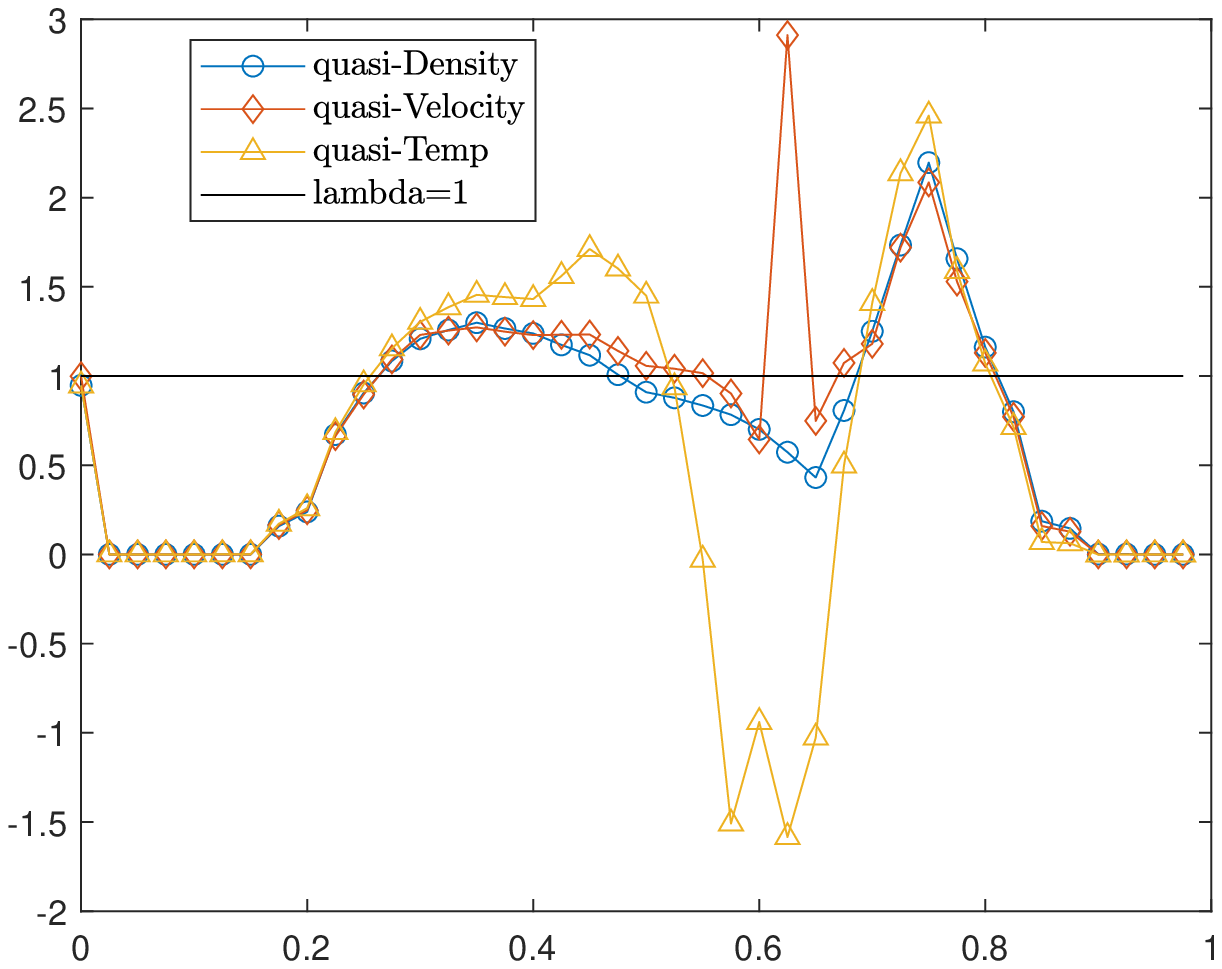}
\includegraphics[width=2.2in]{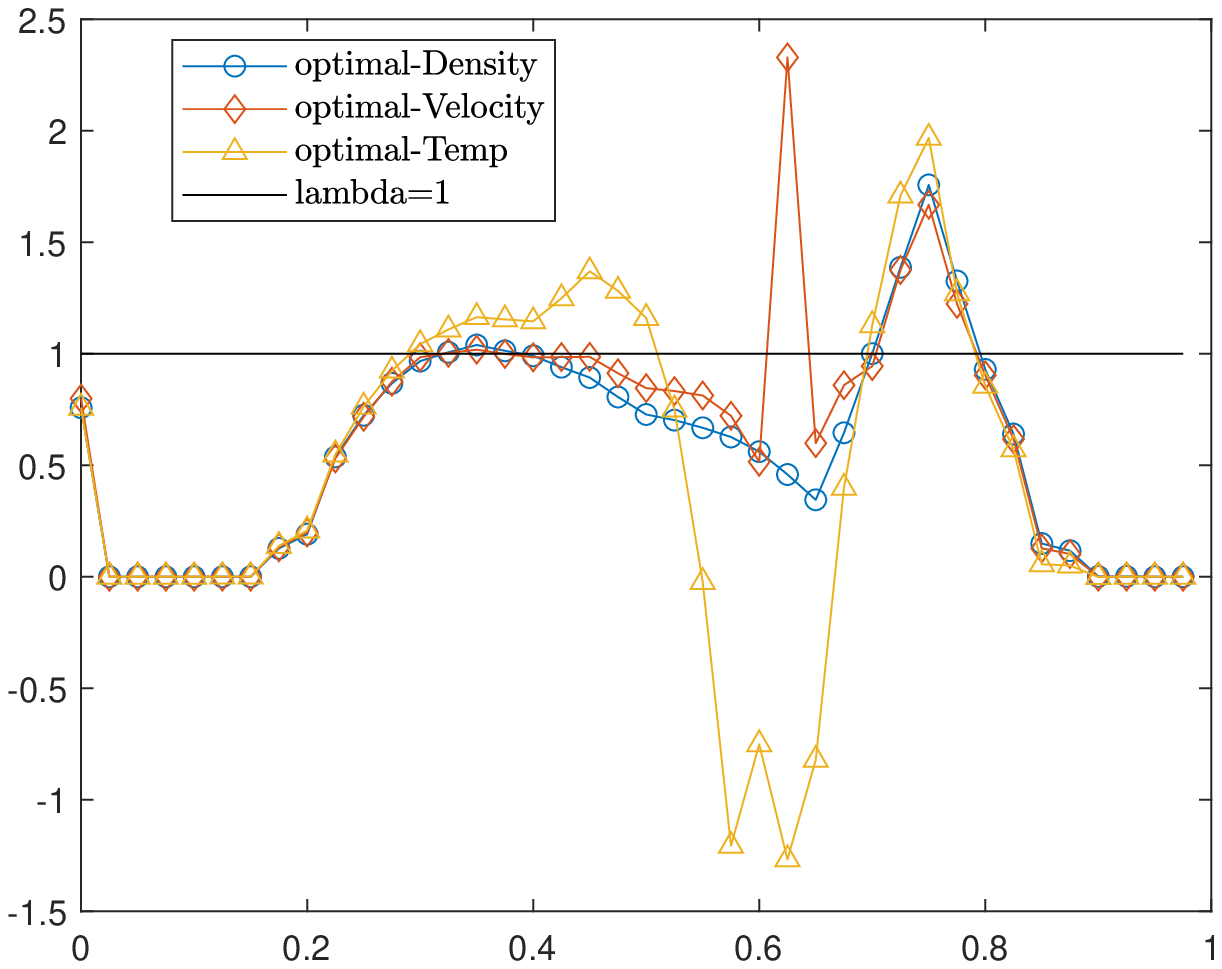}
\includegraphics[width=2.2in]{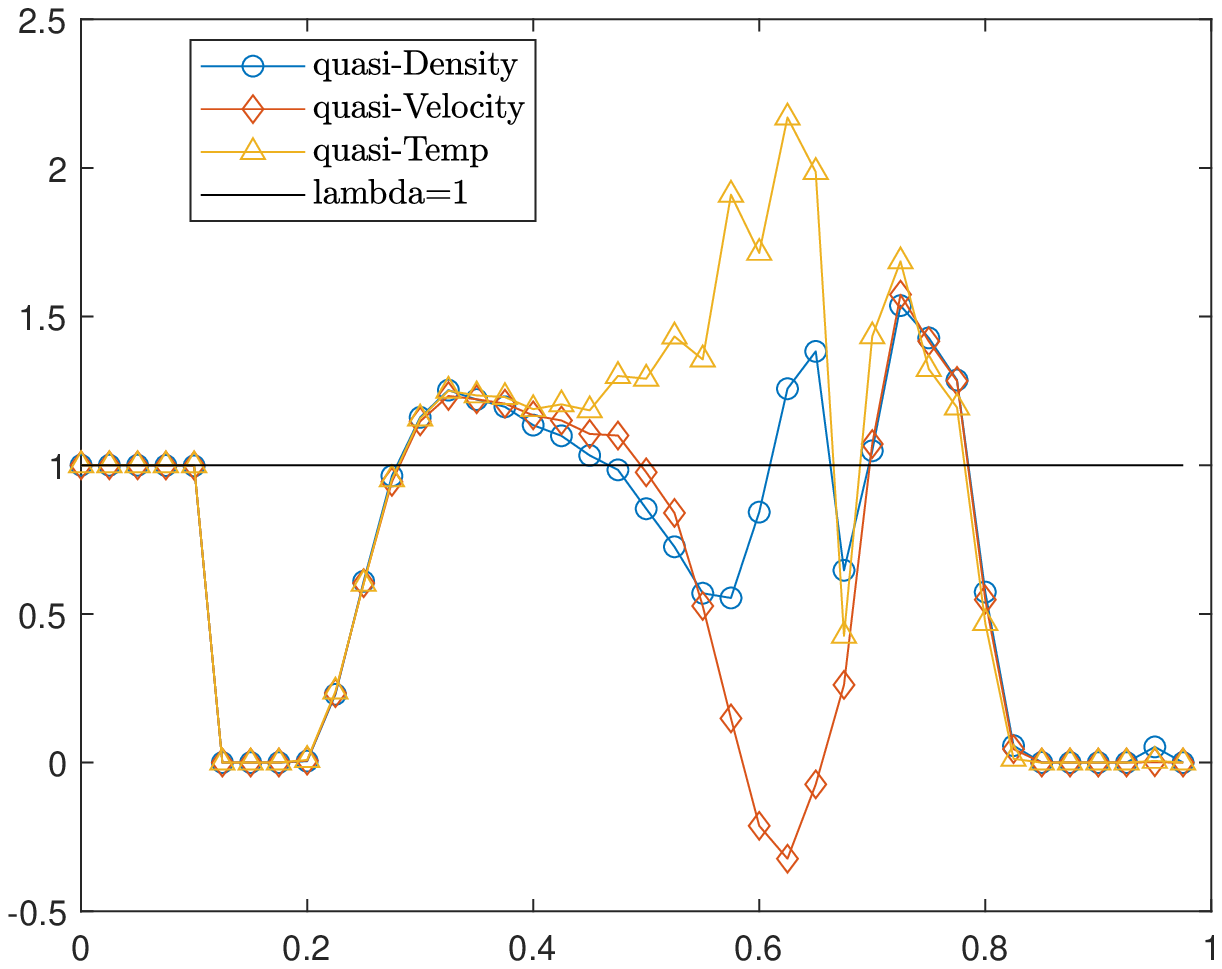}
\includegraphics[width=2.2in]{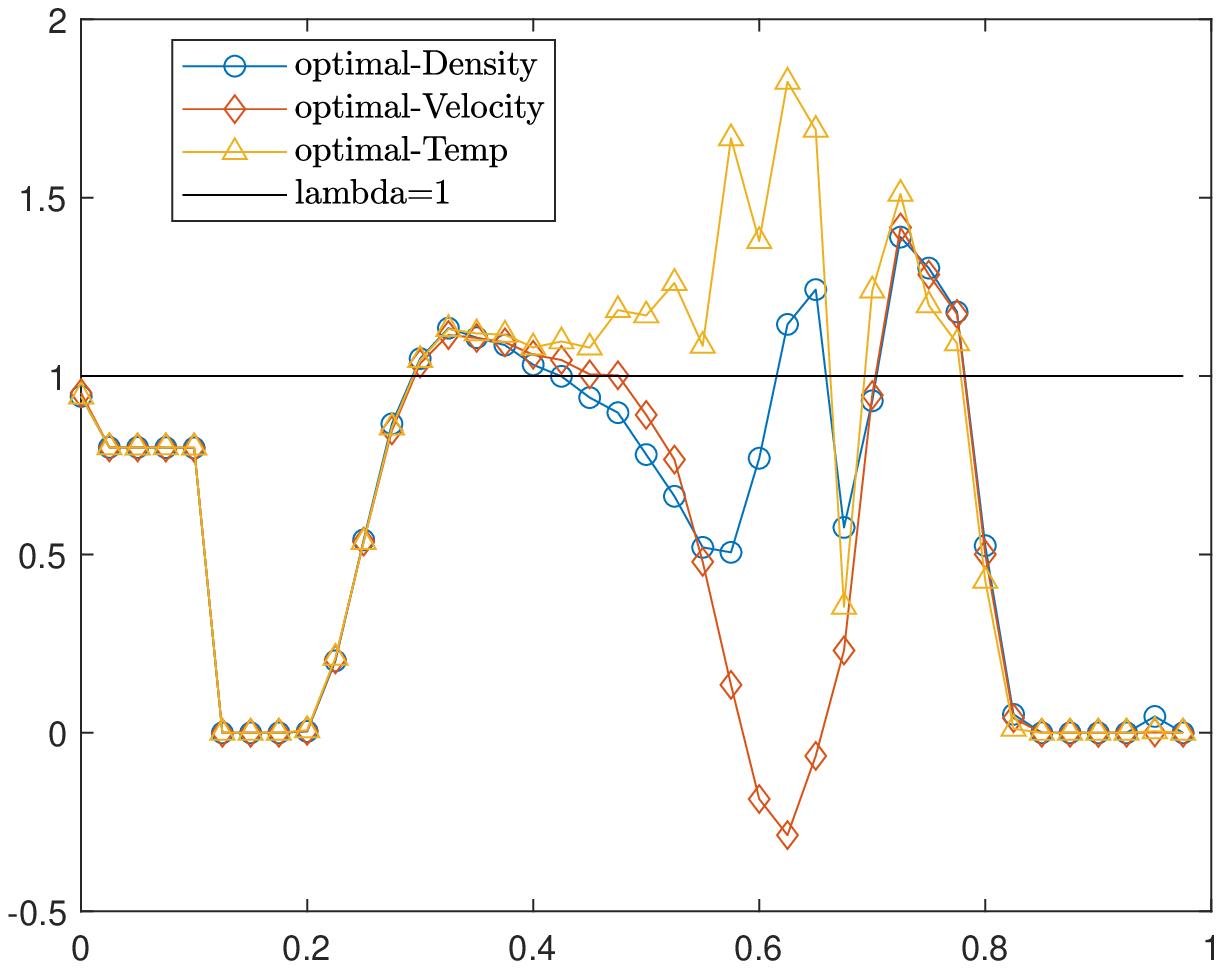}
\caption{Test 2 (I): Values of $\lambda_1$ in quasi-optimal (left) and optimal (right) MLMC methods (top row). Values of $\lambda_2$ in quasi-optimal (left) and optimal (right) MLMC methods (bottom row).}
\label{fig:st1lambda}
\end{center}
\end{figure}

\begin{figure}[tb]
\begin{center}
\includegraphics[width=2.2in]{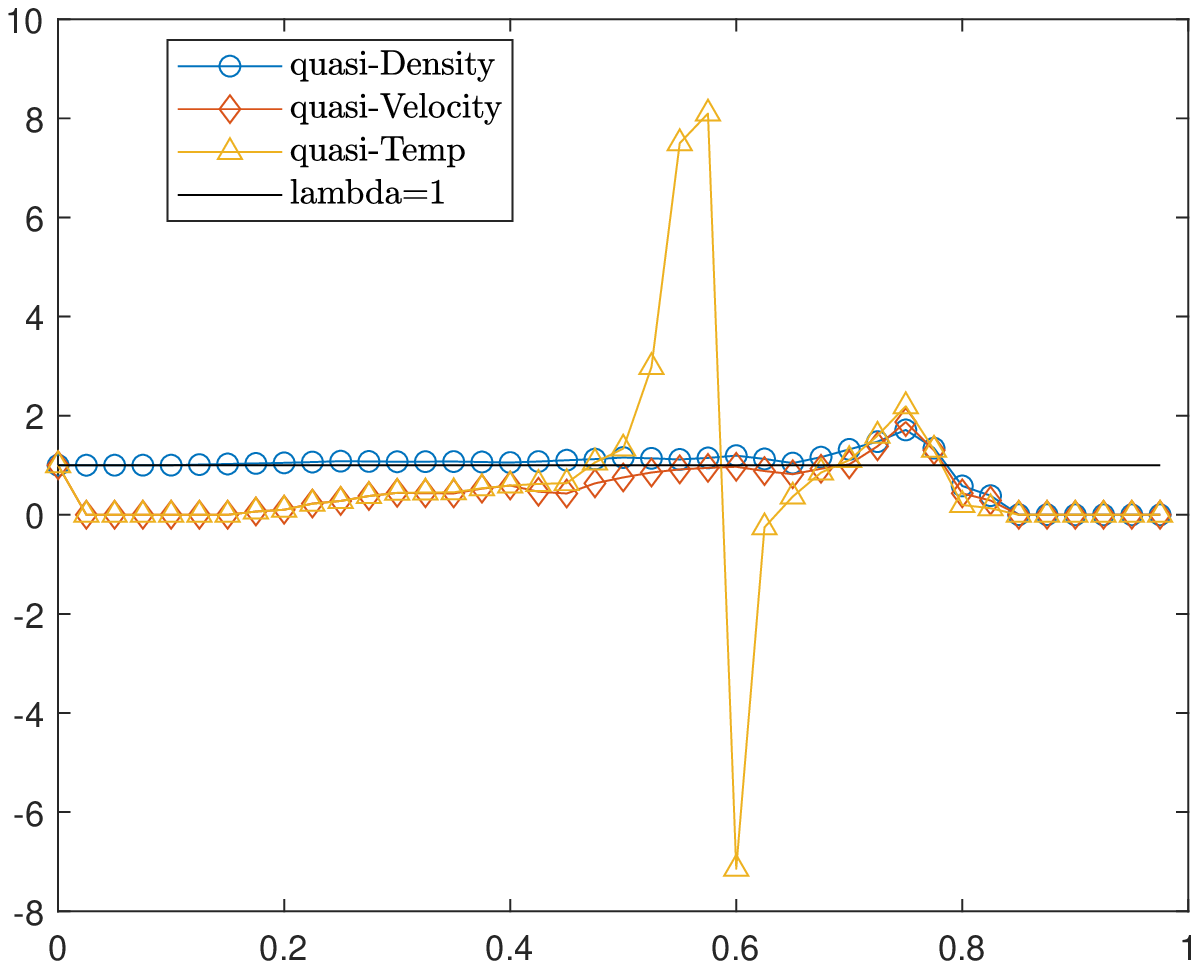}
\includegraphics[width=2.2in]{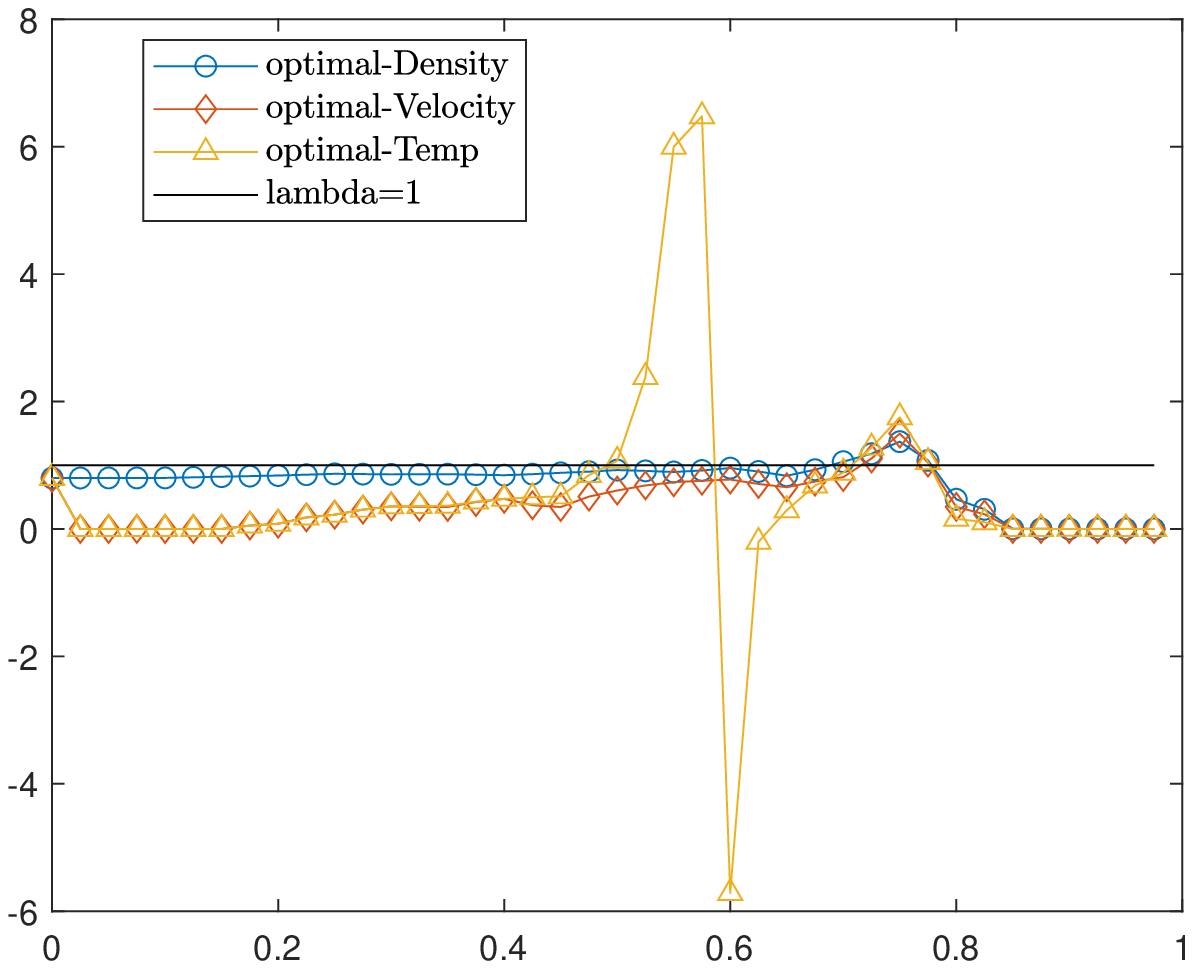}
\includegraphics[width=2.2in]{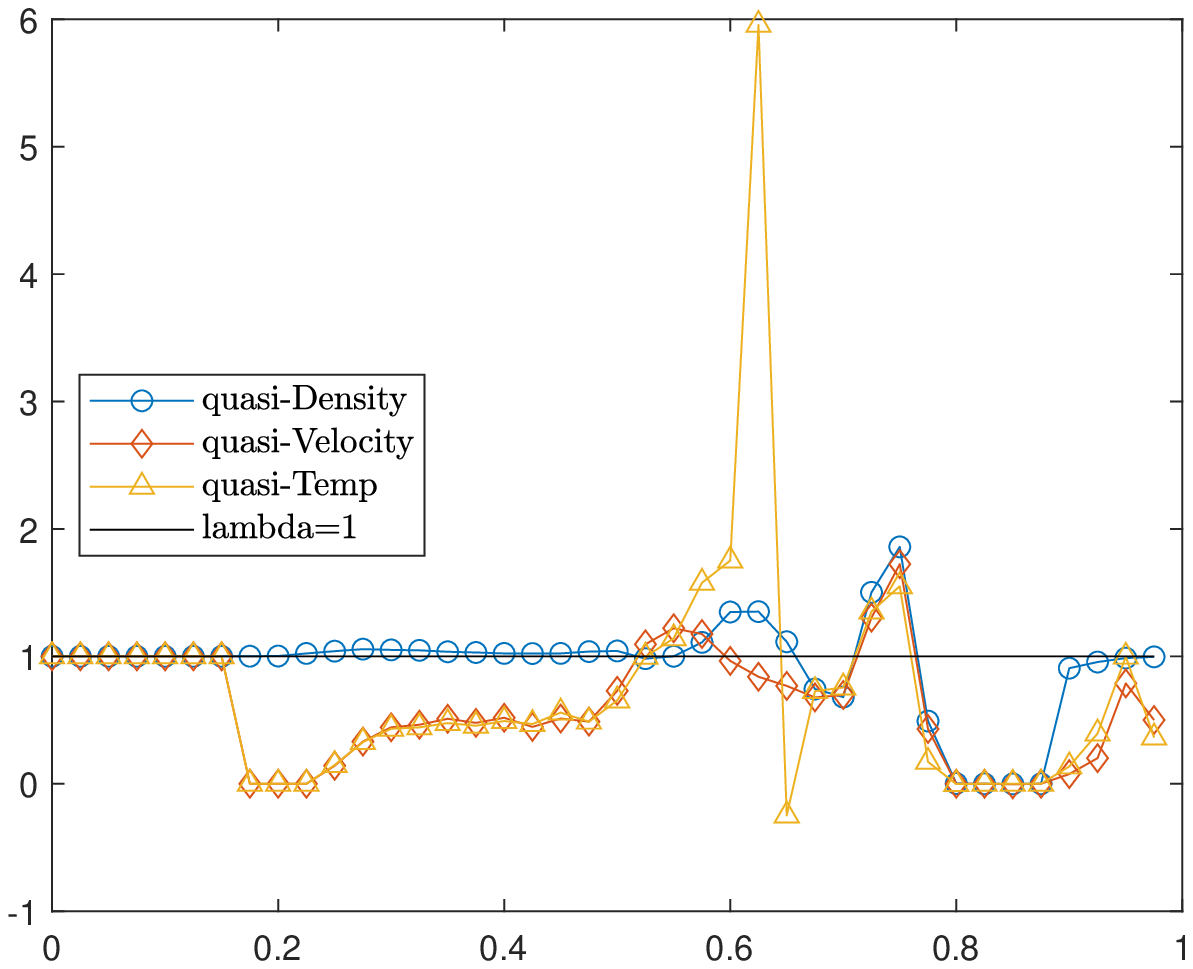}
\includegraphics[width=2.2in]{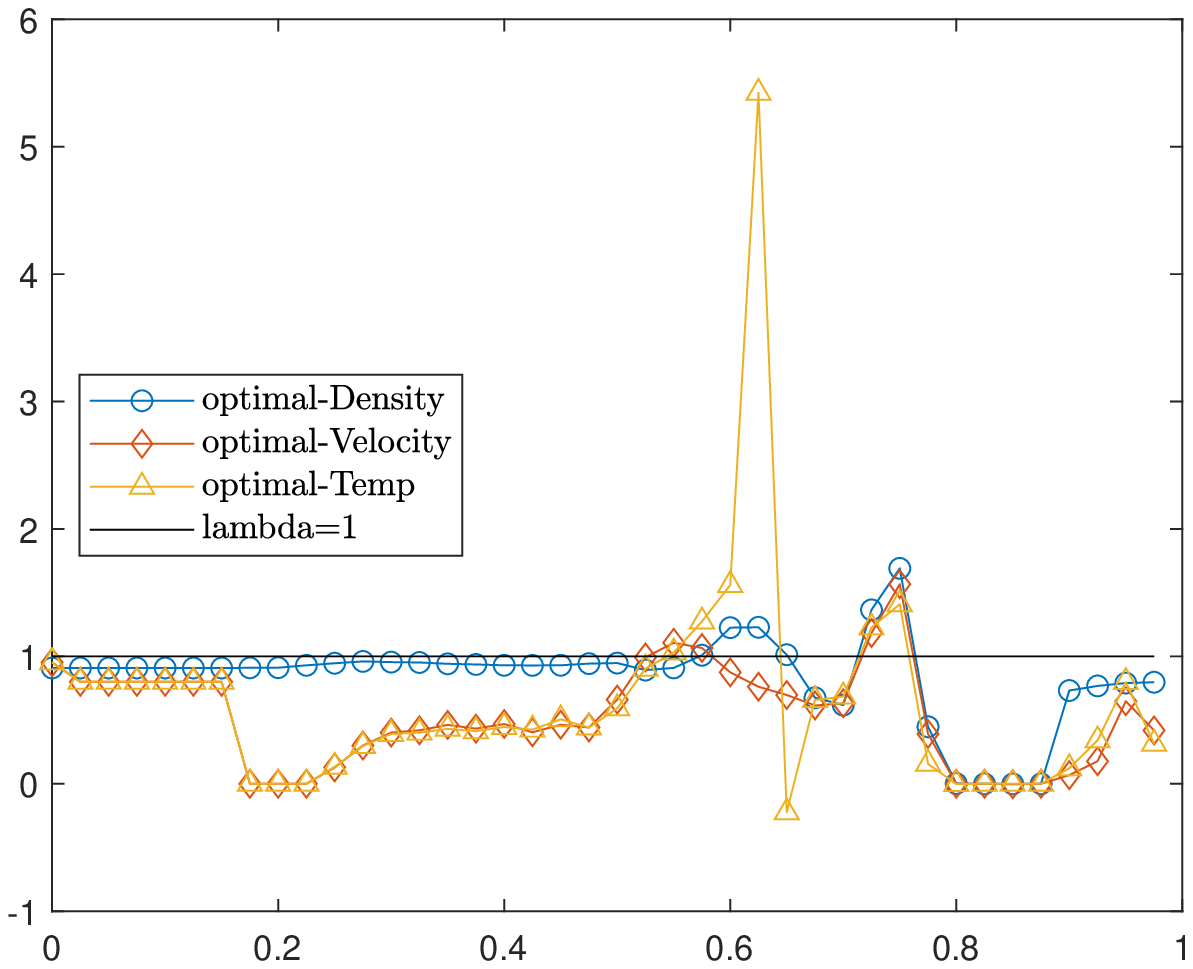}
\caption{Test 2 (II): Values of $\lambda_1$ in quasi-optimal (left) and optimal (right) MLMC methods (top row). Values of $\lambda_2$ in quasi-optimal (left) and optimal (right) MLMC methods (bottom row).}
\label{fig:st2lambda}
\end{center}
\end{figure}

\subsection{Test 3: Sudden heating problem}

In the last test, we consider a problem with random boundary condition. The gas is initially in a constant state with $\rho_0=1$, $\mU_0=(0,0,0)$, $T_0=1$ and $f_0(\vx,\vv)=M_{\rho_0,\mU_0,T_0}$. At time $t=0$, we suddenly change the wall temperature at left boundary of effective spatial domain at $x=0$ to 
\be
T_w(z)=3(T_0+sz), \ s=0.2,
\ee
where the random variable $z$ obeys the uniform distribution on $[-1,1]$. We assume purely diffusive Maxwell boundary condition at $x=0$ and homogeneous Neumann boundary condition at $x=1$. The Knudsen number is set as $\varepsilon=0.1$. This is a classical benchmark test in kinetic theory. With the sudden rise of the wall temperature, the gas close to the wall is heated and accordingly the pressure rises sharply and pushes the gas away forming a shock propagating into the domain. 

We compare the three MLMC methods using parameters: mesh sizes $N_1=10, N_2=20, N_3=40$, and number of samples $M_1=1280, M_2=320, M_3=80$ (these parameters are chosen based on a similar test as in previous examples, we omit the detail). The results are shown in \cref{fig:sdexp} and \cref{fig:sdvar}. Again the control variate MLMC methods outperform the standard MLMC in all simulations.

\begin{figure}[tb]
\begin{center}
\includegraphics[width=1.89in]{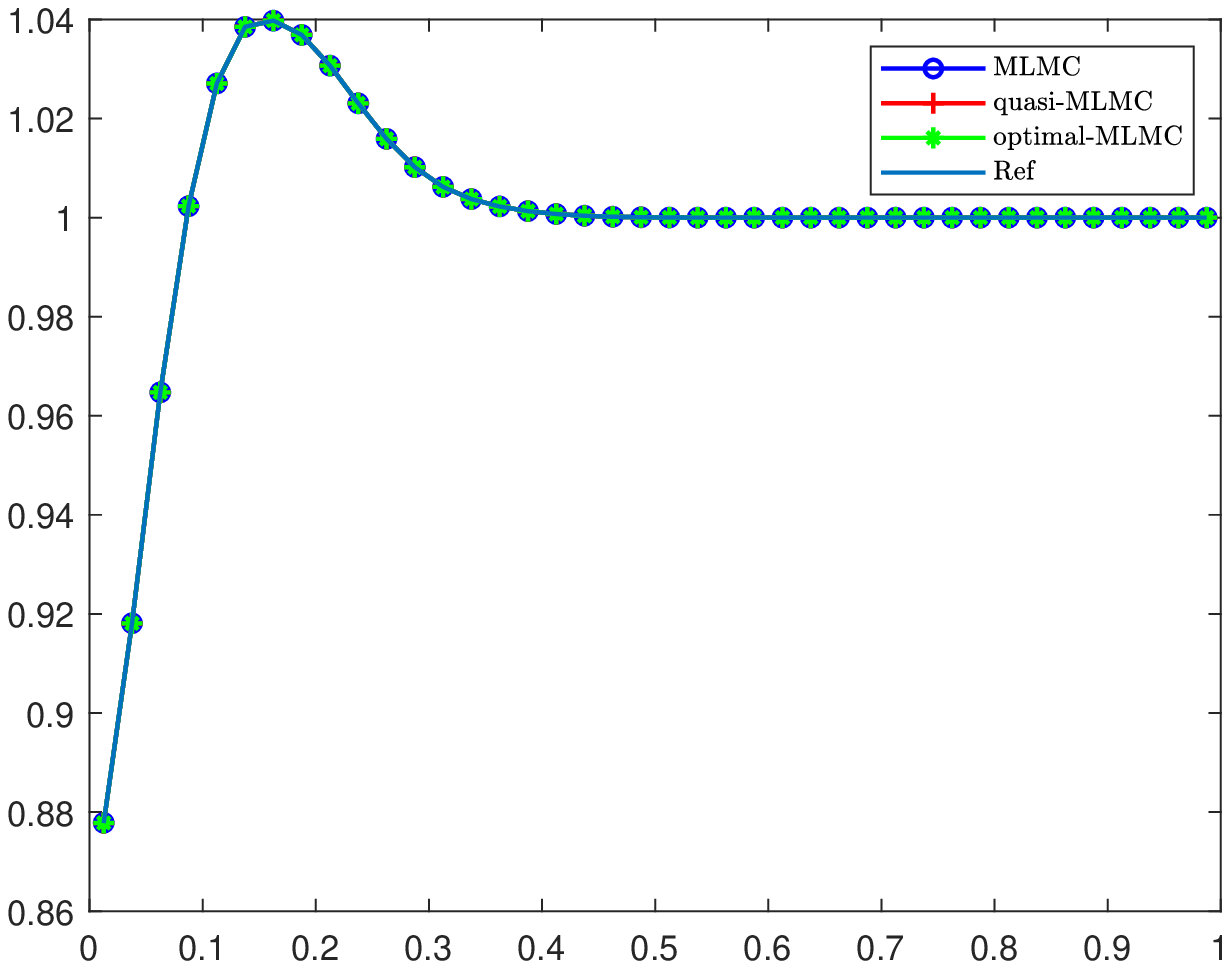}
\includegraphics[width=1.89in]{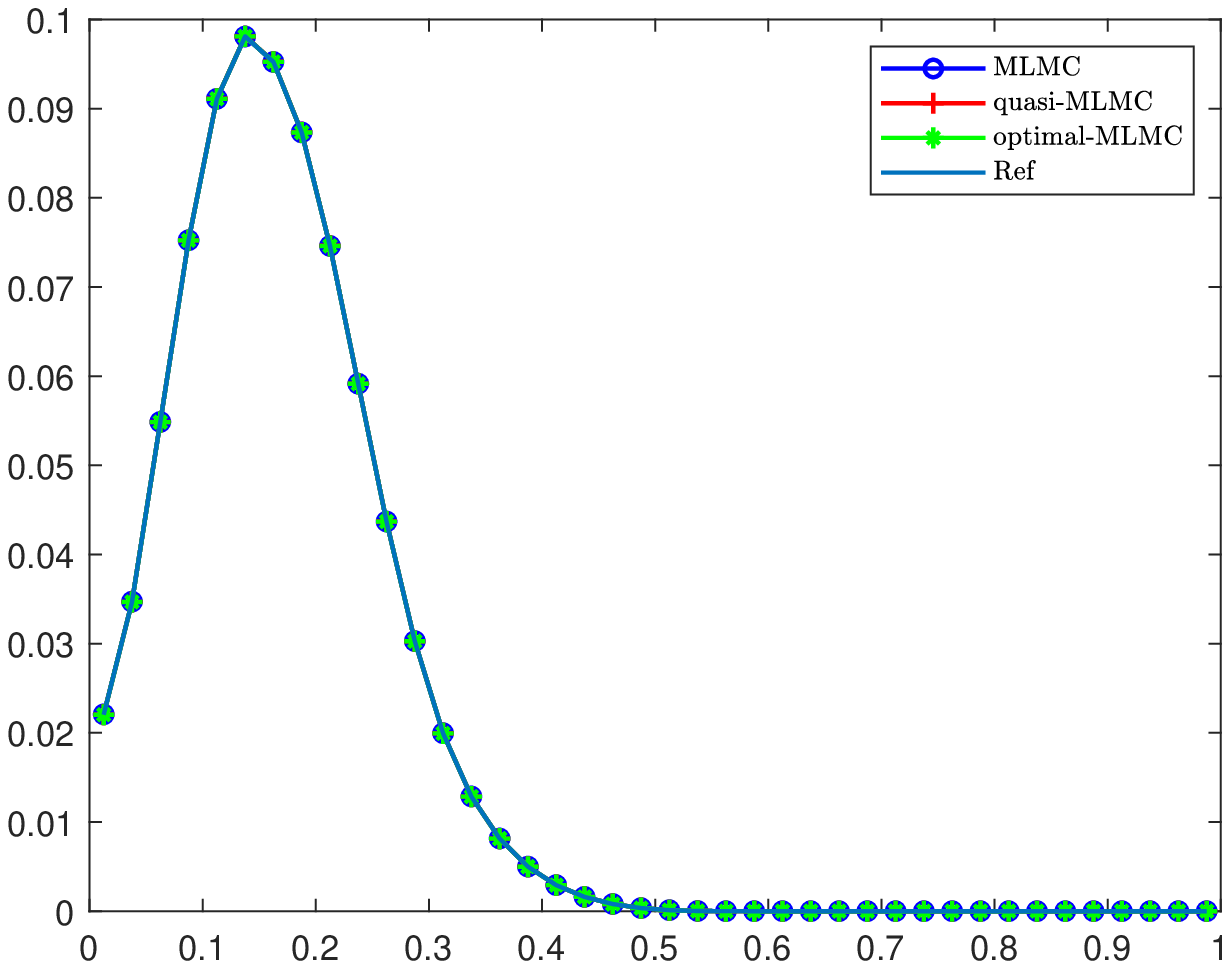}
\includegraphics[width=1.89in]{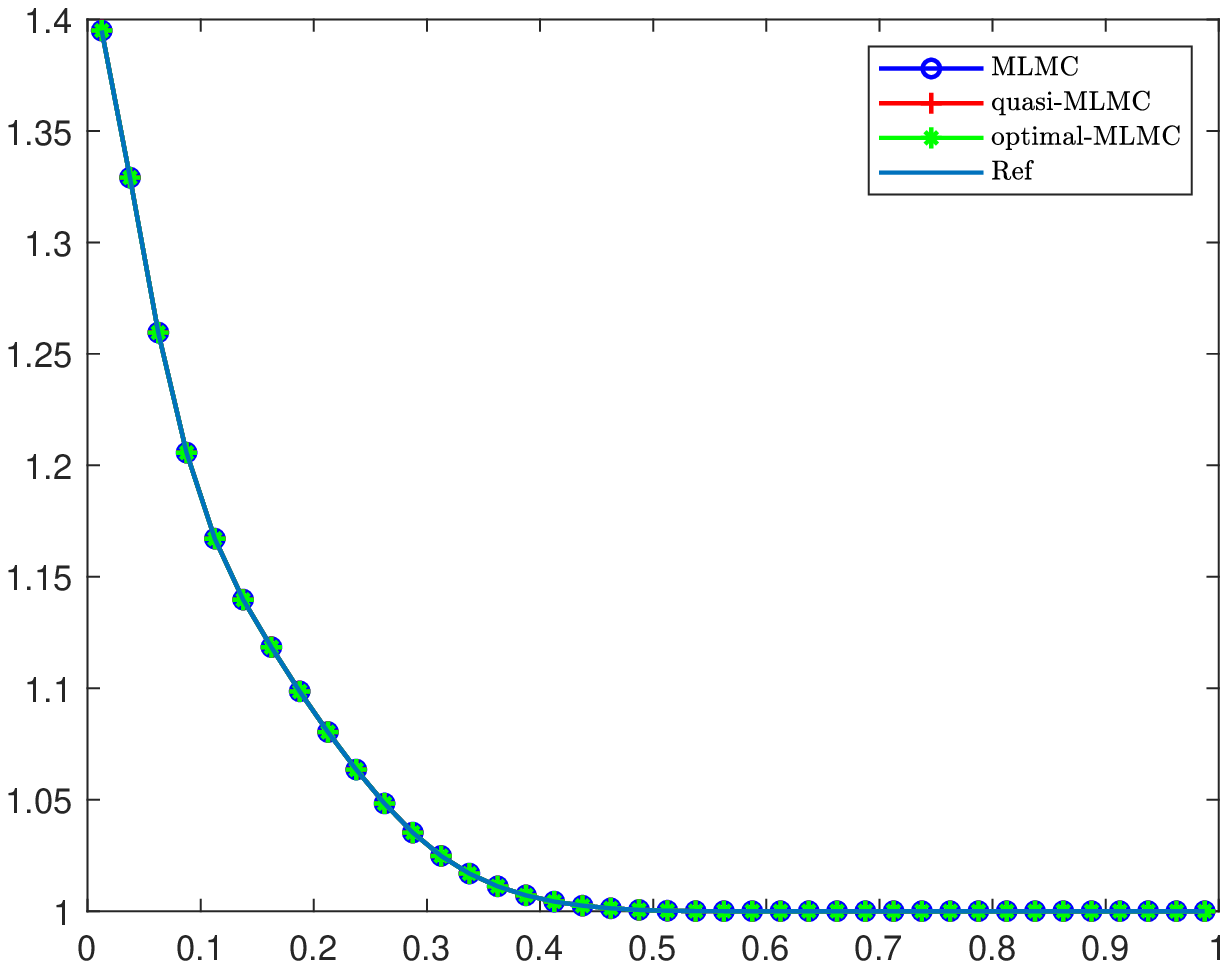}
\includegraphics[width=1.89in]{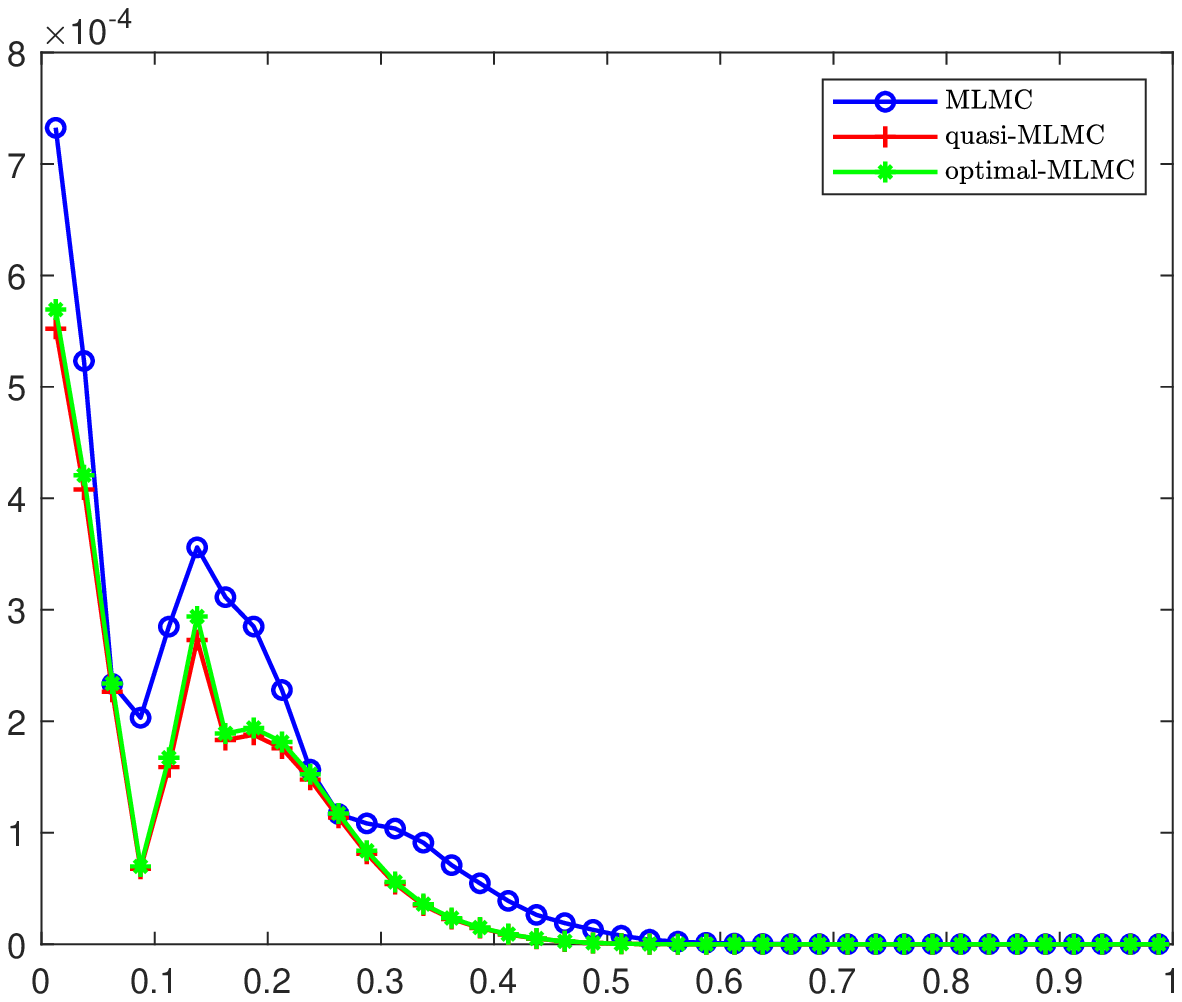}
\includegraphics[width=1.89in]{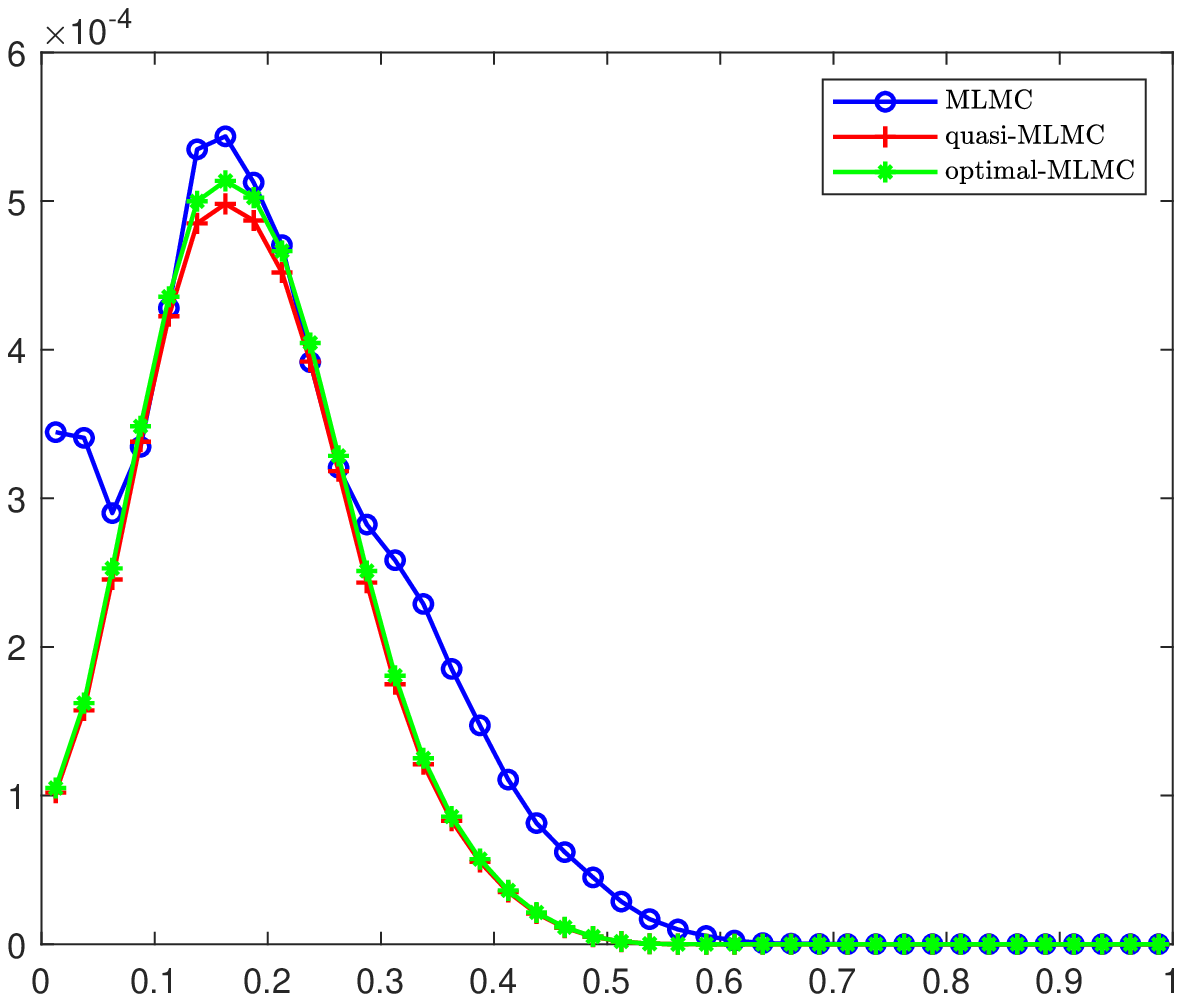}
\includegraphics[width=1.89in]{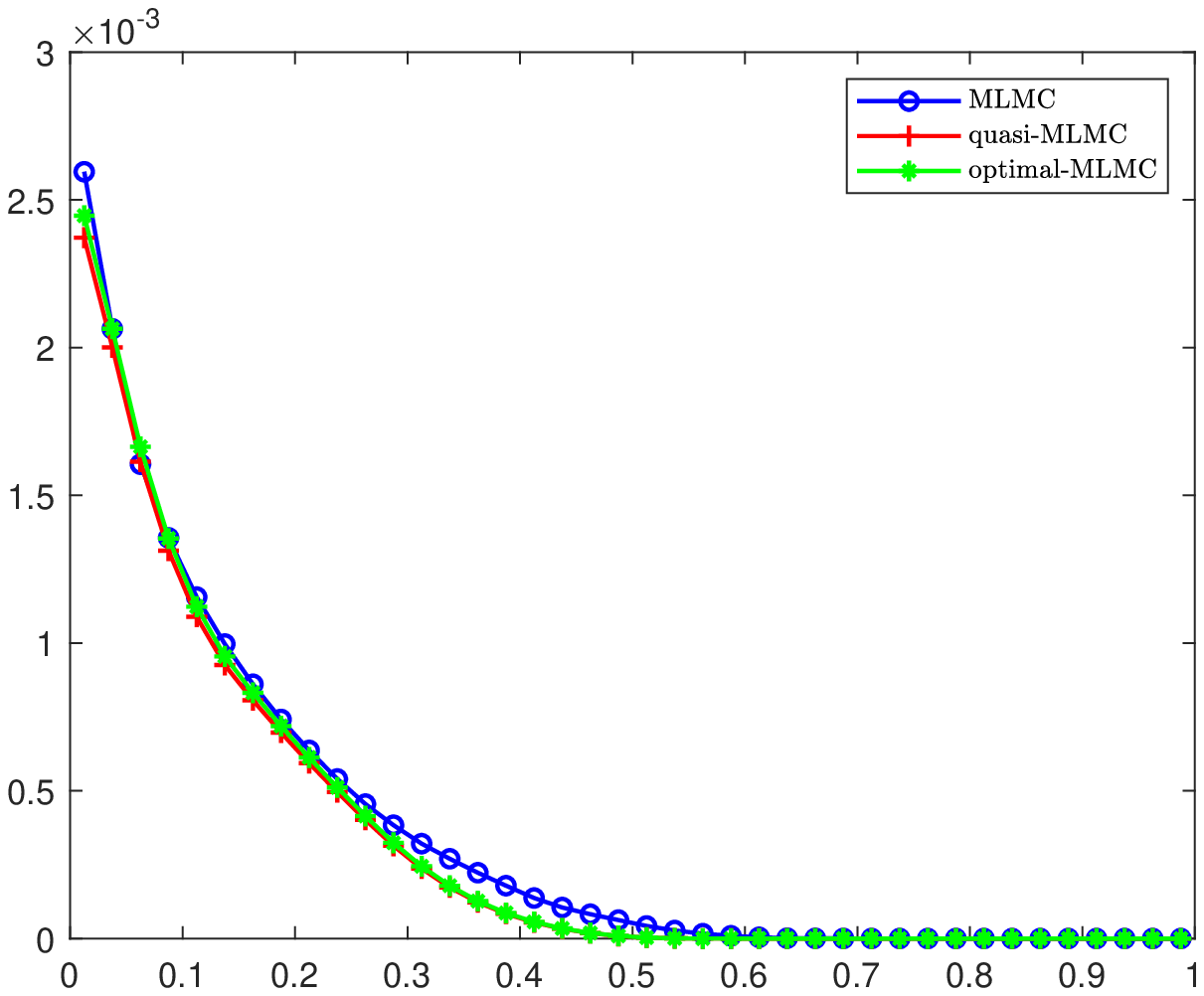}
\caption{Test 3: Approximated expectation of density $\mathbb{E}[\rho]$ (left), velocity $\mathbb{E}[U]$ (middle) and temperature $\mathbb{E}[T]$ (right) using MLMC, quasi-optimal MLMC and optimal MLMC methods at time $t=0.1$ (top row). Relative error \cref{def:relativespatialerror} of expectation of density (left), velocity (middle) and temperature (right) using three methods (bottom row).}
\label{fig:sdexp}
\end{center}
\end{figure}

\begin{figure}[tb]
\begin{center}
\includegraphics[width=1.89in]{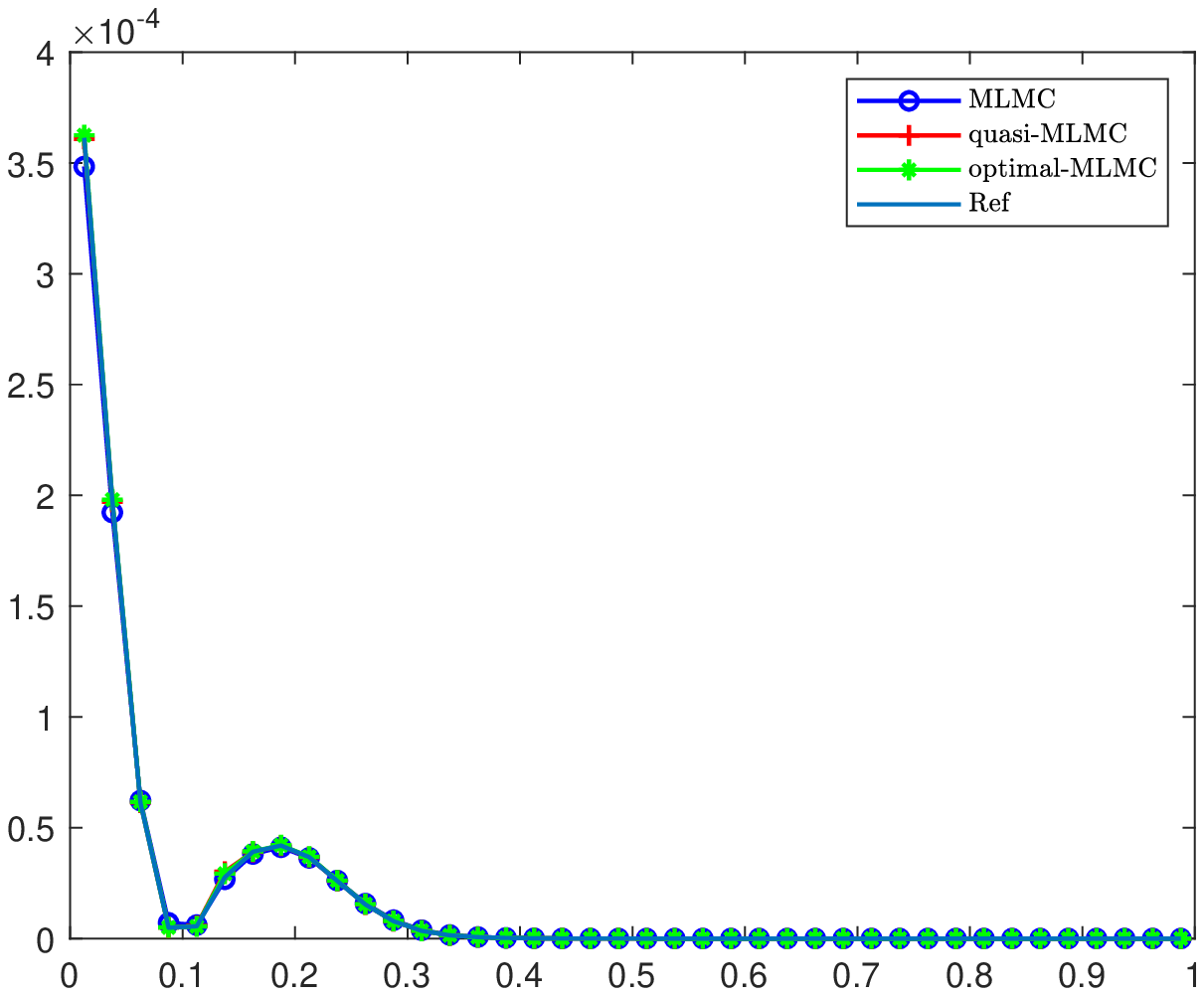}
\includegraphics[width=1.89in]{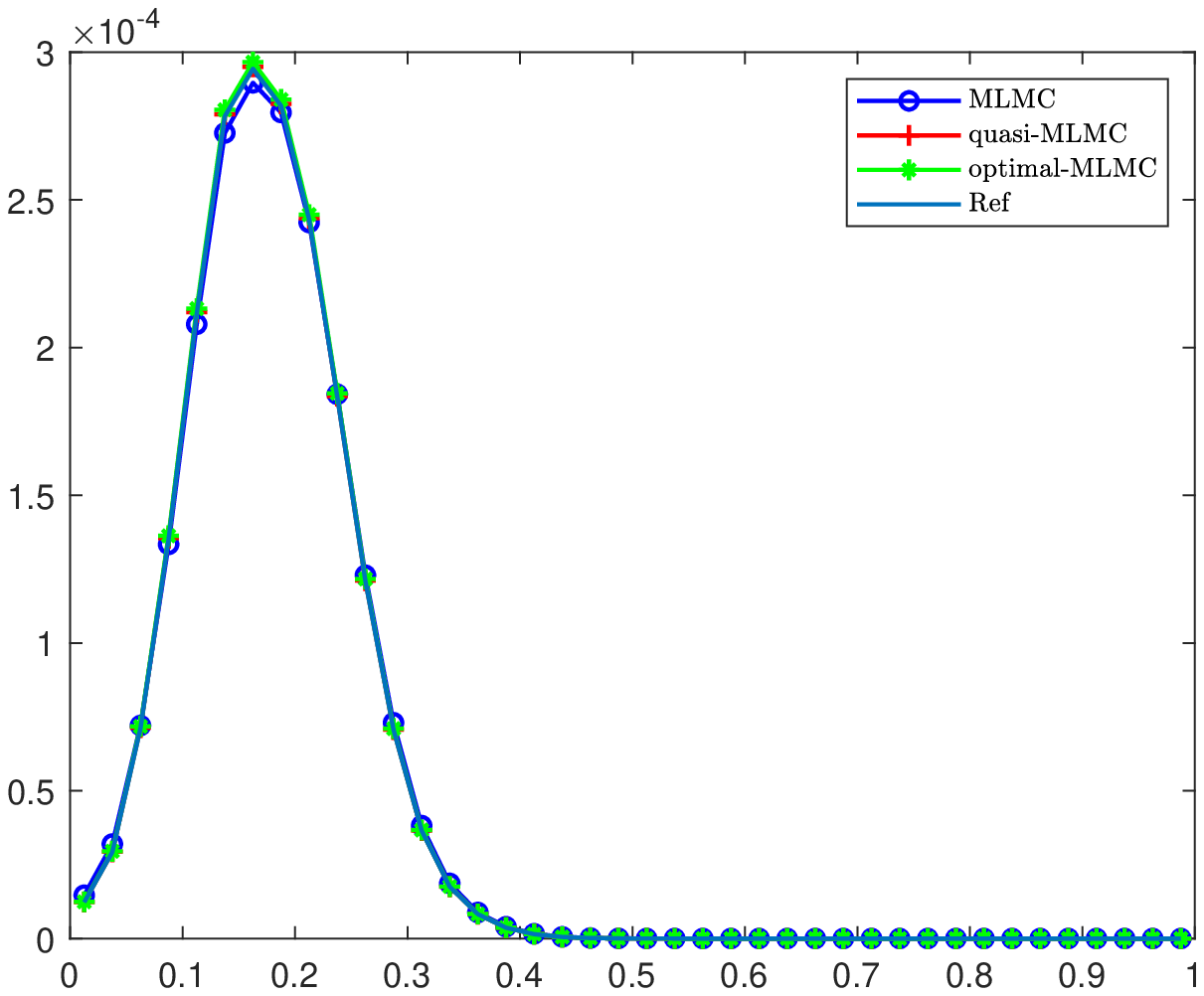}
\includegraphics[width=1.89in]{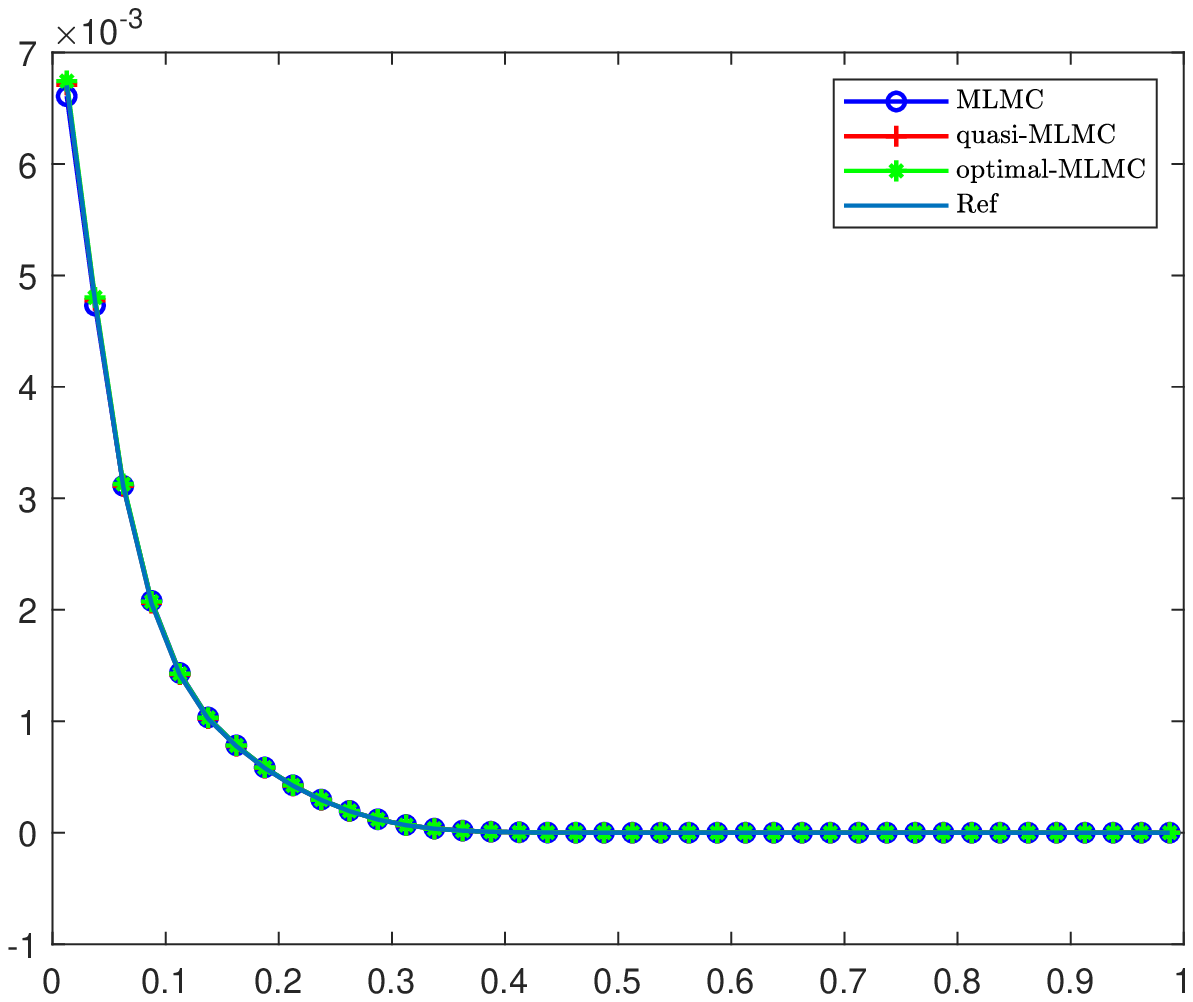}
\includegraphics[width=1.89in]{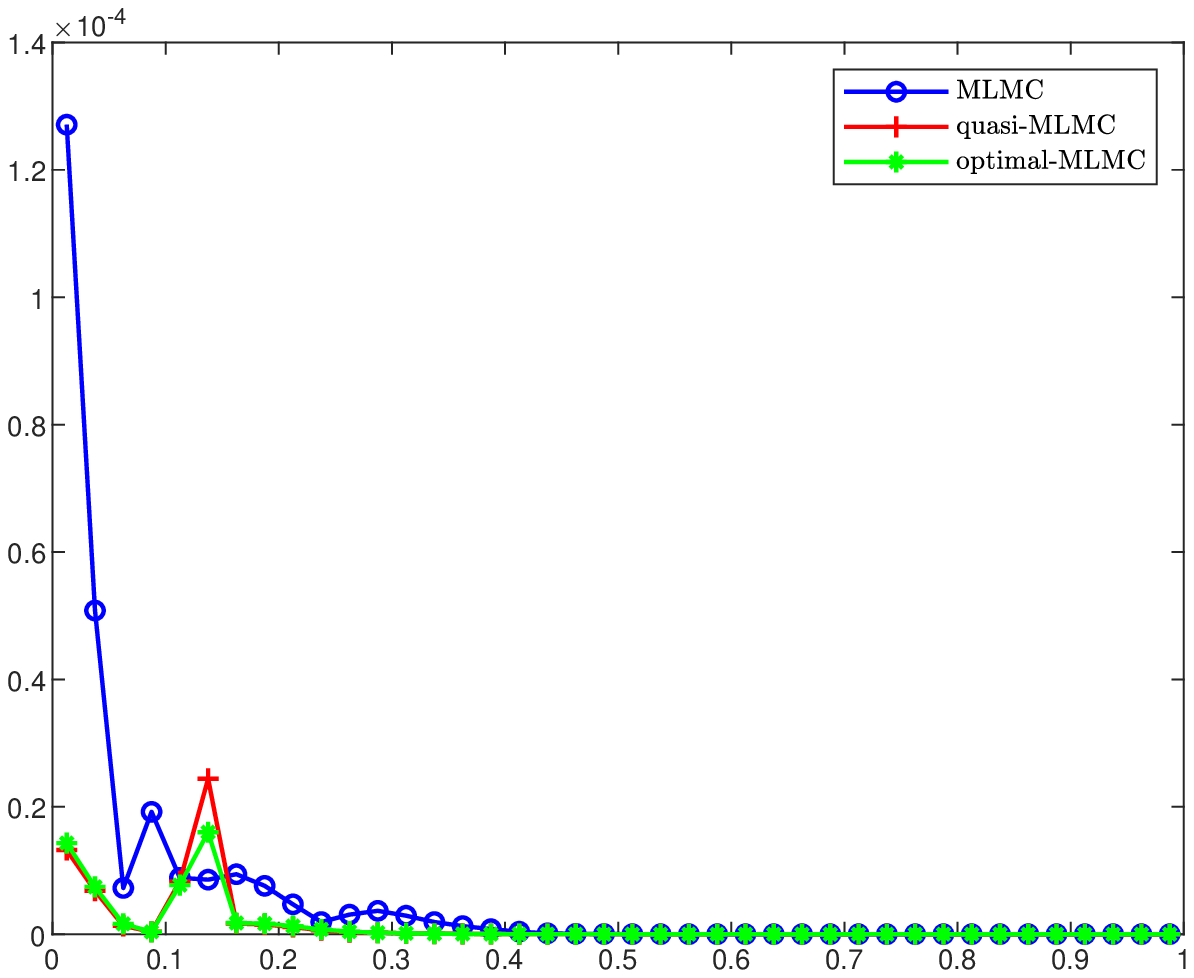}
\includegraphics[width=1.89in]{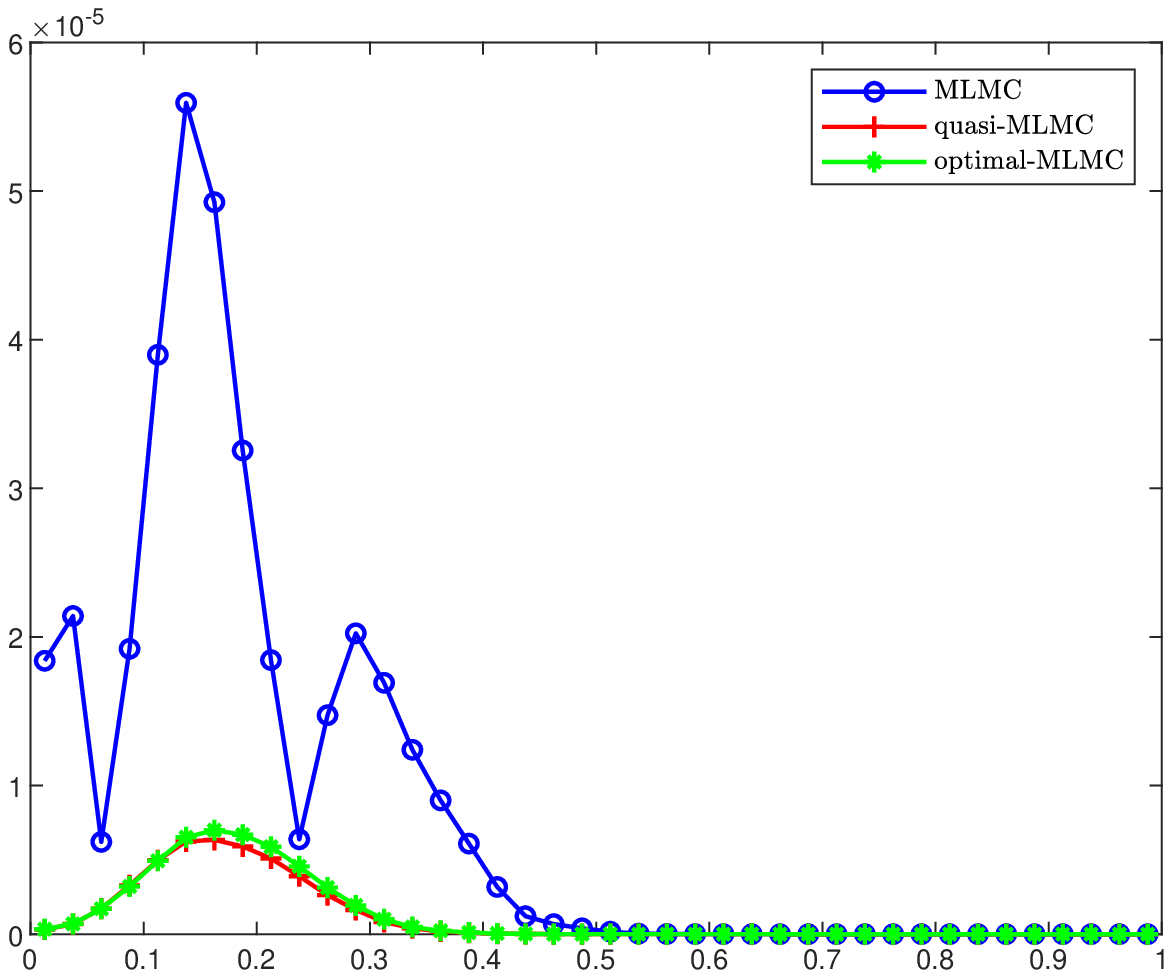}
\includegraphics[width=1.89in]{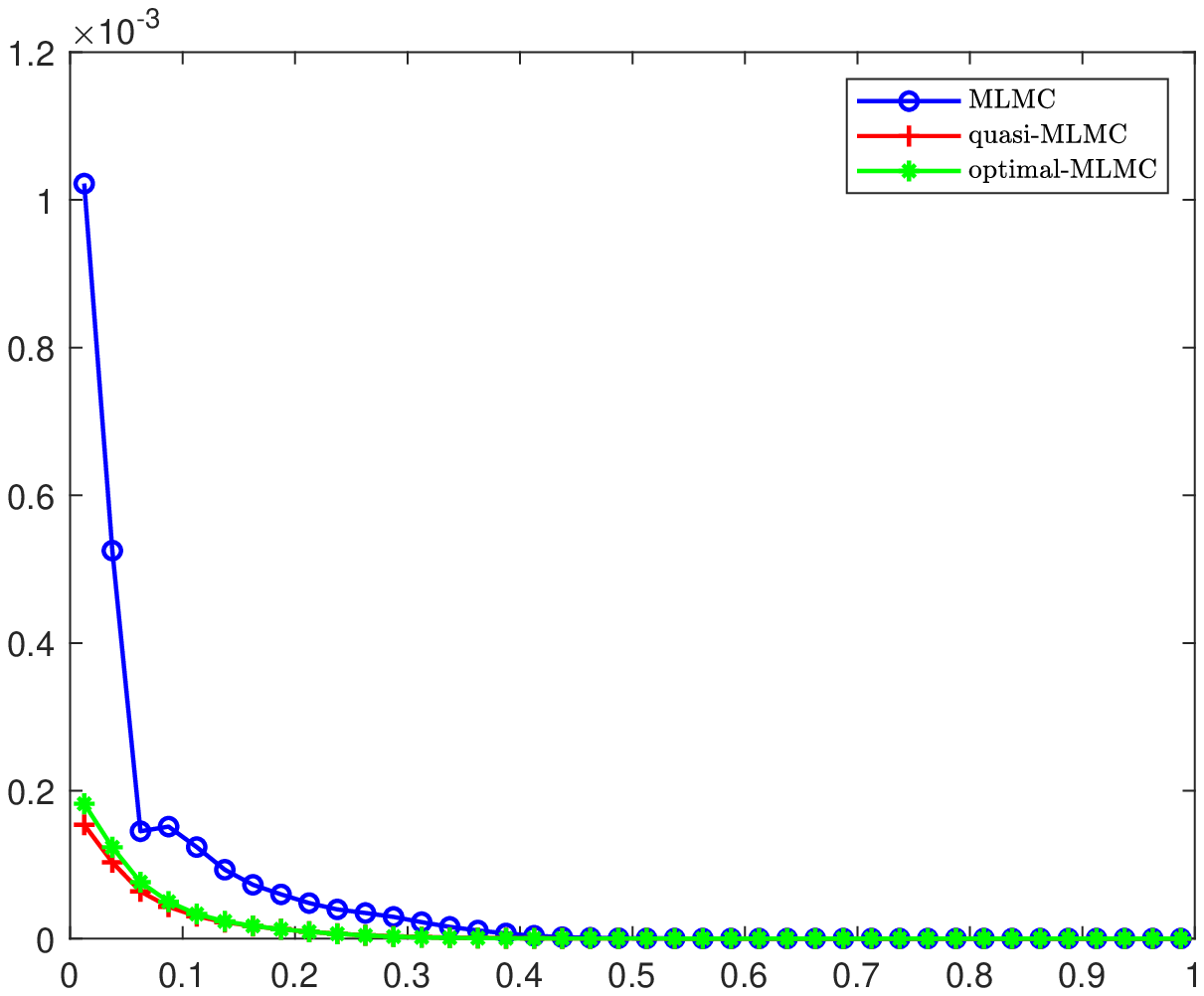}
\caption{Test 3: Approximated variance of density $\mathbb{V}[\rho]$ (left), velocity $\mathbb{V}[U]$ (middle) and temperature $\mathbb{V}[T]$ (right) using MLMC, quasi-optimal MLMC and optimal MLMC methods at time $t=0.1$ (top row). Relative error \cref{def:relativespatialerror} of variance of density (left), velocity (middle) and temperature (right) using three methods (bottom row).}
\label{fig:sdvar}
\end{center}
\end{figure}


\section{Conclusions}
\label{sec:conclusions}

We have introduced a control variate multilevel Monte Carlo method for the BGK model of the Boltzmann equation with uncertainty. Well-posedness of the BGK equation with random parameters, consistency and convergence analysis for various MC type methods are established. Extensive numerical results confirm that the MLMC methods perform much better than the standard MC, and the control variate MLMC is capable to provide further improvement over the conventional MLMC, in particular for problems close to fluid regimes and in presence of discontinuities, where the fidelity degree of the various levels is reduced and traditional gPC-SG based methods may fail. On the other hand, we haven't observed any gain using a global optimal strategy in the variance reduction with respect to a local quasi-optimal strategy based on two subsequent levels, which is subject to future investigation. The approach here developed naturally extends to other kinetic equations of Boltzmann type which combines deterministic discretizations in the phase space with Monte Carlo sampling in the random space.


\appendix

\section{Dimension reduction method and deterministic solver for the BGK equation}
\label{appendix}

In this Appendix, we briefly describe the dimension reduction method adopted to reduce the computational complexity of the BGK equation and the details of the numerical methods used to discretize time, physical space and velocity space. Since the Monte Carlo methods are non-intrusive, our discussion will be based on the deterministic equation (\ref{eqn:bgk}) for simplicity. 

\subsection{The Chu reduction method}

The BGK equation (\ref{eqn:bgk}) lives in six-dimensional phase space whose computation can be extremely expensive. Under certain homogeneity assumptions, one can reduce the dimension using the so-called Chu reduction \cite{Chu1965}.

Let $\vx=(x_1,x_2,x_3)$, $\vv=(v_1,v_2,v_3)$, and $\mU=(U_1,U_2,U_3)$. If the physical domain $D\in \mathbb{R}^3$ is effective only in one dimension and homogeneous in the other two dimensions (e.g., a tube), it is reasonable to assume the following:
\be
\label{ass:chureduction}
\bal
\p_{x_2}f=\p_{x_3}f=0, \quad U_2=U_3=0.
\eal
\ee
Then the equation (\ref{eqn:bgk}) becomes
\be
\label{eq: bgkchu}
\p_{t} {f(x_1,v_1,v_2,v_3,t)}  +v_1  \p_{x_1} f(x_1,v_1,v_2,v_3,t)=\frac{1}{\varepsilon}\left(M[f]-f(x_1,v_1,v_2,v_3,t)\right),
\ee
where \be
M[f](x_1,v_1,v_2,v_3,t)=\fl {\rho(x_1,t)} {{(2\pi T(x_1,t))}^{\fl 3 2}}\exp\left({-\fl {{(v_1-U_1(x_1,t))}^2+v_2^2+v_3^2}{2T(x_1,t)}}\right).
\ee
The Chu reduction proceeds by introducing two distribution functions:
\begin{align}
\label{def:phi}
&\phi(x_1,v_1,t):=\iint_{\mathbb{R}^2} f(x_1,v_1,v_2,v_3,t) \, {\rm d}v_2{\rm d}v_3,\\
\label{def:psi}
& \psi(x_1,v_1,t):=\iint_{\mathbb{R}^2} \left(\fl 1 2 v_2^2 + \fl 1 2 v_3^2\right)f(x_1,v_1,v_2,v_3,t) \, {\rm d}v_2{\rm d}v_3.
\end{align}
It is then easy to derive that $\phi$ and $\psi$ satisfy the following system:
\begin{align}
\label{eqn:1dbgkchuphi}
&\p_{t} {\phi(x_1,v_1,t)}  +v_1  \p_{x_1} \phi(x_1,v_1,t)=\frac{1}{\varepsilon}\left(M_{\phi}(x_1,v_1,t)-\phi(x_1,v_1,t)\right),\\
\label{eqn:1dbgkchupsi}
&\p_{t}  {\psi(x_1,v_1,t)}  +v_1  \p_{x_1} \psi(x_1,v_1,t)=\frac{1}{\varepsilon}\left(M_{\psi}(x_1,v_1,t)-\psi(x_1,v_1,t)\right),
\end{align}
where
\begin{align}
\label{eqn:1dmaxchuphi}
&M_{\phi}(x_1,v_1,t):=\iint_{\mathbb{R}^2} M[f] \,{\rm d} v_2 {\rm d} v_3=\fl {\rho(x_1,t)} {\sqrt{2\pi T(x_1,t)}}\exp\left({-\fl {(v_1-U_1(x_1,t))^2}{2T(x_1,t)}}\right),\\
\label{eqn:1dmaxchupsi}
&M_{\psi}(x_1,v_1,t):=\iint_{\mathbb{R}^2} \left(\fl 1 2 v_2^2 +\fl 1 2 v_3^2\right)M[f] \, {\rm d} v_2  {\rm d} v_3= {T(x_1,t)}M_{\phi}.
\end{align}
Denoting $\int_{\mathbb{R}} \cdot  \rd{v_1}=\langle \cdot \rangle$, it is easy to see the following relation holds
\be \label{eqn:discretemacroquant}
\bal
&\rho=\int_{\mathbb{R}}  \phi \rd{v_1}=\int_{\mathbb{R}}  M_{\phi} \rd{v_1},\\
&m=\rho U_1=\int_{\mathbb{R}}  v_1\phi \rd{v_1}=\int_{\mathbb{R}}  v_1M_{\phi} \rd{v_1},\\
&E=\frac{1}{2}\rho U_1^2+\frac{3}{2}\rho T=\int_{\mathbb{R}} \left( \fl 1 2 v_1^2\phi +\psi\right) \rd{v_1}=\int_{\mathbb{R}} \left( \fl 1 2 v_1^2 M_{\phi}+M_{\psi} \right)\rd{v_1}.
\eal
\ee
                  
Now our task is to solve the reduced 1D BGK system \cref{eqn:1dbgkchuphi}-\cref{eqn:1dbgkchupsi}. 

\subsection{The fully discrete scheme}  

The fully discrete scheme used to solve \cref{eqn:1dbgkchuphi}-\cref{eqn:1dbgkchupsi} consists of three components: velocity discretization, time discretization, and spatial discretization.
                                     
\subsubsection*{Velocity discretization}

In the velocity space, we follow the discrete velocity method (see Section 4.1.1 in \cite{GPT} or \cite{Mieussens2000} for example), which satisfies a discrete entropy decay property.

We first truncate the infinite velocity domain into a bounded interval $[-R, R]$ and then discretize it using $N_v$-point Gauss quadrature with $(\xi_k,w_k)$, $k=1,2,\dots,N_v$ as abscissae and weights. To obtain $M_\phi$, $M_\psi$ from $\phi$ and $\psi$, normally one could use the relation in (\ref{eqn:discretemacroquant}), where the continuous integral is replaced by the Gauss quadrature. However, due to the domain truncation error, the resulting moments are not sufficiently accurate. To remove this error, we assume 
\be
{M}_\phi=\exp(\alpha_1 +\alpha_2 v_1+\alpha_3 v_1^2), \quad M_\psi=-\frac{1}{2\alpha_3}M_\phi,
\ee
and determine $\alpha_1$, $\alpha_2$, $\alpha_3$ such that
\be
\label{eqn:newton}
\bmat{\langle M_{\phi}\rangle\\\langle v_1M_{\phi}\rangle\\ \langle\fl 1 2 v_1^2 M_{\phi}+M_\psi\rangle}=\bmat{\langle {\phi}\rangle\\\langle v_1{\phi}\rangle\\ \langle\fl 1 2 v_1^2 {\phi}+{\psi}\rangle}:=\bmat{\rho\\ m \\ E},
\ee
where $\langle u(v_1) \rangle:=\sum_{k=1}^{N_v} u(\xi_k)w_k$ denotes the quadrature sum in the interval $[-R,R]$. The above nonlinear system is solved by the Newton-Raphson algorithm. 

\subsubsection*{Time discretization}

Due to the possibly stiff collision term, we use the implicit-explicit Runge-Kutta (IMEX-RK) scheme \cite{dimarcopareschiIMEX,Puppo} for the time discretization. In particular, we employ the second-order IMEX-RK scheme proposed in \cite{Hu2018}, which is positivity preserving and asymptotic preserving (preserving the Euler limit without $\Delta t$ resolving $\varepsilon$). 

Specifically, we discretize (\ref{eqn:1dbgkchuphi}) and (\ref{eqn:1dbgkchupsi}) as
\be \label{scheme2}
\begin{split}
&\phi^{(i)}=\phi^n-\Delta t \sum\limits_{j=1}^{i-1}\tilde{a}_{ij}v_1 \p_{x_1} \phi^{(j)}+\Delta t \sum\limits_{j=1}^{i}a_{ij}\fl {1} \varepsilon \left(M_\phi^{(j)}-\phi^{(j)}\right), \quad i=1,\dots,\nu,\\
&\psi^{(i)}=\psi^n-\Delta t \sum\limits_{j=1}^{i-1}\tilde{a}_{ij}v_1 \p_{x_1} \psi^{(j)}+\Delta t \sum\limits_{j=1}^{i}a_{ij}\fl {1} \varepsilon \left(M_\psi^{(j)}-\psi^{(j)}\right), \quad i=1,\dots,\nu,\\
&\phi^{n+1}=\phi^{(\nu)}+\alpha{\Delta t}^2\fl {1}{{\varepsilon}^2}    \left(M_\phi^{n+1}-\phi^{n+1}\right),\\
&\psi^{n+1}=\psi^{(\nu)}+\alpha{\Delta t}^2\fl {1}{{\varepsilon}^2}    \left(M_\psi^{n+1}-\psi^{n+1}\right),
\end{split}
\ee
where the values of the coefficients \(\tilde{a}_{ij},a_{ij},\alpha\) are given in Section 2.6.1 of \cite{Hu2018}. To implement the above scheme explicitly, we first solve the moment system 
\be
\bal
&\bmat{\langle \phi^{(i)} \rangle \\ \langle v_1\phi^{(i)} \rangle \\  \langle \frac{1}{2}v_1^2 \phi^{(i)} +\psi^{(i)} \rangle}=\bmat{\langle \phi^{n} \rangle \\ \langle v_1\phi^{n} \rangle \\  \langle \frac{1}{2}v_1^2 \phi^{n} +\psi^{n} \rangle}
-\Delta t \sum\limits_{j=1}^{i-1}\tilde{a}_{ij} \bmat{\langle v_1\partial_{x_1}\phi^{(j)} \rangle \\ \langle v_1^2\partial_{x_1}\phi^{(j)} \rangle \\  \langle \frac{1}{2}v_1^3 \partial_{x_1}\phi^{(j)} +v_1 \partial_{x_1}\psi^{(j)} \rangle}, \!\!\!\quad i=1,\dots,\nu,\\
&\bmat{\langle \phi^{n+1} \rangle \\ \langle v_1\phi^{n+1} \rangle \\  \langle \frac{1}{2}v_1^2 \phi^{n+1} +\psi^{n+1} \rangle}=\bmat{\langle \phi^{(\nu)} \rangle \\ \langle v_1\phi^{(\nu)} \rangle \\  \langle \frac{1}{2}v_1^2 \phi^{(\nu)} +\psi^{(\nu)} \rangle},
\eal
\ee
which is obtained by taking the moments of (\ref{scheme2}) and using (\ref{eqn:newton}). Hence we can obtain $\rho^{(i)}$, $m^{(i)}$ and $E^{(i)}$ first, and use them to define $M_{\phi}^{(i)}$ and $M_{\psi}^{(i)}$. Finally we solve (\ref{scheme2}) to get $\phi^{(i)}$ and $\psi^{(i)}$.

\subsubsection*{Spatial discretization}

In the physical space, we use the second order MUSCL finite volume scheme \cite{VanLeer79}.

Here we take the following first order in time scheme for $\phi$ as an illustration (suppose it is evaluated at velocity point $v_1=\xi_k$):
\be
\label{eqn:general1dbgk}
\frac{\phi^{n+1}_k(x_1)-\phi^n_k(x_1)}{\Delta t}  + \xi_k \partial_{x_1} \phi^n_k(x_1)=\frac{1}{\varepsilon}\left((M_{\phi})^{n+1}_k(x_1)-\phi^{n+1}_k(x_1)\right).
\ee
Suppose $x_1\in[a,b]$ and $[a,b]$ is divided into $N_x$ uniform cells with size $\Delta x=(b-a)/N_x$, where $a=x_{\frac 12}$, $b=x_{N_x+\frac 12}$. In the cell $[x_{j-\frac 12},x_{j+\frac 12}]$, define the cell average as
\be
\phi_{j,k}^n:=\fl 1 {\Delta x} \int_{x_{j-\fl 1 2}}^{x_{j+ \fl 1 2}} \phi^n_k(x_1) \ {\rm d}x_1.
\ee
Then integrating (\ref{eqn:general1dbgk}) over $[x_{j-\frac 12},x_{j+\frac 12}]$ yields
\be
\fl {\phi^{n+1}_{j,k}-\phi^{n}_{j,k}}{\Delta t}+\fl {{F_{j+\fl 1 2,k}^n}-{F_{j-\fl 1 2,k}^n}}{\Delta x}=\fl 1 {\varepsilon}\left( (M_\phi)_{j,k}^{n+1}-\phi^{n+1}_{j,k}\right), 
\ee
where $(M_\phi)_{j,k}^{n+1}:=(M_\phi)_k^{n+1}(x_j)$. Note that we have replaced the cell average of $(M_{\phi})^{n+1}_k$ by its point value at cell center $x_j$ (the error introduced by this is $O(\Delta x^2)$ which does not destroy the overall order of the method). \({F_{j+\fl 1 2,k}^n} \) is the flux at interface \(x_{j+\fl 1 2}\) and is defined as
\be
{F_{j+\fl 1 2,k}^n}=\max(0,\xi_k)\phi^n_{l,j,k}+\min(0,\xi_k)\phi^n_{r,j+1,k},
\ee
with the left interface and right interface values  \(\phi_{l,j,k}^n,\phi_{r,j,k}^n\)  given by
\be
\left\{
\begin{aligned}
\phi_{l,j,k}^n&=\phi_{j,k}^n+\fl 1 2 \Delta x \sigma_{j,k}^{n} ,\\
\phi_{r,j,k}^n&=\phi_{j,k}^{n}-\fl 1 2 \Delta x \sigma_{j,k}^{n},
\end{aligned}
\right.
\ee
where \(\sigma_{j,k}^{n}\) is the slope of the linear reconstruction and is chosen to be the MC limiter (\(\theta =2\)):
\be
\sigma_{j,k}^{n}=\text{minmod} \left (\fl {\phi_{j+1,k}^n-\phi_{j-1,k}^n}{2 \Delta x},\theta\left(\fl {\phi_{j,k}^n-\phi_{j-1,k}^n}{\Delta x}\right),\theta\left(\fl {\phi_{j+1,k}^n-\phi_{j,k}^n}{\Delta x}\right)\right ).
\ee

\bibliographystyle{siamplain}
\bibliography{ref}

\end{document}